\newtheorem{thm}{Theorem}
\newtheorem{lem}[thm]{Lemma}
\newtheorem{prop}[thm]{Proposition}
\newtheorem{cor}[thm]{Corollary}
\theoremstyle{definition}
\newtheorem{defn}{Definition}[section]
\theoremstyle{remark}
\newtheorem{remark}{Remark} 
\theoremstyle{plain}
\def\CC{{\mathbb C}}
\def\NN{{\mathbb N}}
\def\QQ{{\mathbb Q}}
\def\RR{{\mathbb R}}
\def\ZZ{{\mathbb Z}}
\def\vecu{{\text{\boldmath$u$}}}
\def\vecv{{\text{\boldmath$v$}}}
\def\vecw{{\text{\boldmath$w$}}}
\def\vecx{{\text{\boldmath$x$}}}
\def\scrA{{\mathcal A}}
\def\scrB{{\mathcal B}}
\def\scrC{{\mathcal C}}
\def\scrD{{\mathcal D}}
\def\scrF{{\mathcal F}}
\def\scrH{{\mathcal H}}
\def\scrI{{\mathcal I}}
\def\scrK{{\mathcal K}}
\def\scrP{{\mathcal P}}
\def\scrS{{\mathcal S}}
\def\scrU{{\mathcal U}}
\def\scrW{{\mathcal W}}
\def\scrY{{\mathcal Y}}
\def\scrZ{{\mathcal Z}}
\def\fB{{\mathfrak B}}
\def\fG{{\mathfrak G}}
\def\Re{\operatorname{Re}}
\def\dim{\operatorname{dim}}
\def\SL{\operatorname{SL}}
\def\SO{\operatorname{SO}}
\def\tr{\operatorname{tr}}
\def\supp{\operatorname{supp}}
\def\Proj{\operatorname{Proj}}
\def\Onder#1#2#3#4#5{#1 \setbox0=\hbox{$#1$}\setbox1=\hbox{$#2$}
       \dimen0=.5\wd0 \dimen1=\dimen0 \dimen2=\dp0 \dimen3=\dimen2
       \advance\dimen0 by .5\wd1 \advance\dimen0 by -#4
       \advance\dimen1 by -.5\wd1 \advance\dimen1 by -#4
       \advance\dimen2 by -#3 \advance\dimen2 by \ht1
       \advance\dimen2 by 0.3ex \advance\dimen3 by #5
        \kern-\dimen0\raisebox{-\dimen2}[0ex][\dimen3]{\box1}
       \kern\dimen1}
\newcommand{\GaG}{\Gamma\backslash G}
\newcommand{\sfrac}[2]{{\textstyle \frac {#1}{#2}}}
\newcommand{\fg}{\mathfrak{g}}
\newcommand{\fp}{\mathfrak{p}}
\newcommand{\fu}{\mathfrak{u}}
\newcommand{\fa}{\mathfrak{a}}
\newcommand{\fh}{\mathfrak{h}}
\newcommand{\fm}{\mathfrak{m}}
\newcommand{\fn}{\mathfrak{n}}
\newcommand{\fk}{\mathfrak{k}}
\newcommand{\Ug}{\mathcal{U}(\mathfrak{g}_{\mathbb{C}})}
\newcommand{\Uh}{\mathcal{U}(\mathfrak{h}_{\mathbb{C}})}
\newcommand{\Ugm}{\mathcal{U}^m(\mathfrak{g}_{\mathbb{C}})}
\newcommand{\Zg}{\mathcal{Z}(\mathfrak{g}_{\mathbb{C}})}
\newcommand{\ad}{\mathrm{ad}}
\newcommand{\Ad}{\mathrm{Ad}}
\newcommand{\Stab}{\mathrm{Stab}}
\newcommand{\ft}{\mathfrak{t}}
\newcommand{\fq}{\mathfrak{q}}
\newcommand{\rank}{\mathrm{rank} }
\newcommand{\disG}[2]{\mathsf{dist}_G(#1,#2) }
\newcommand{\disGa}[2]{\mathsf{dist}_{\Gamma\backslash G}(#1,#2) }
\newcommand{\bk}{\mathbf{k}}
\newcommand{\bj}{\mathbf{j}}
\newcommand{\bi}{\mathbf{i}}
\newcommand{\bl}{\mathbf{l}}
\newcommand{\bm}{\mathbf{m}}
\newcommand{\bI}{\mathbf{I}}
\begin{document}
\title[The Rate of Equidistribution for Translates of Horospheres]{On the Rate of Equidistribution of Expanding Translates of Horospheres in $\Gamma\backslash G$}

\author{Samuel C. Edwards}

\address{Department of Mathematics, Box 480, Uppsala University, SE-75106 Uppsala, Sweden}
\email{samuel.edwards@math.uu.se}


\date{\today}

\subjclass[2000]{}

\keywords{}

\begin{abstract}
Let $G$ be a semisimple Lie group and $\Gamma$ a lattice in $G$. We generalize a method of Burger to prove precise effective equidistribution results for translates of pieces of horospheres in the homogeneous space $\Gamma\backslash G$.
\end{abstract}

\maketitle

\section{Introduction}\label{Intro}
\subsection{Background}
Let $G$ be a connected Lie group and $\Gamma$ a lattice in $G$. Several outstanding problems, particularly in the field of number theory, have been resolved thanks to breakthroughs in the study of orbits related to various subgroups of $G$ on the homogeneous space $\GaG$. Amongst the most well-understood types of orbits on $\GaG$ are those that involve so-called horospherical subgroups of $G$. Equidistribution properties of the orbits of such subgroups play a role in the proofs of several more difficult results in the field, as well as in a number of applications in areas such as number theory, mathematical physics, and geometry; for a few examples and further references, cf.\ Kleinbock and Margulis \cite{KleinMarg}, Marklof \cite{Marklof1, Marklof2, Marklof3}, Marklof and Str\"ombergsson \cite{MarklofStrom}, and Mohammadi and Oh \cite{MohammadiOh}.

Recall that for a one-parameter subgroup $\lbrace g_t \rbrace_{t\in\RR}$ (henceforth denoted $g_{\RR}$) of $G$, the expanding horospherical subgroup $U^+$ with respect to $g_{\RR}$ is defined as
\begin{equation*}
U^+=\lbrace h\in G\,:\, \lim_{ t\rightarrow+\infty} g_{-t} h g_{t} =e \rbrace,
\end{equation*}
and the contracting horospherical subgroup $U^-$ with respect to $g_{\RR}$ as
\begin{equation*}
U^-=\lbrace h\in G\,:\, \lim_{t\rightarrow-\infty} g_{-t} h g_{t} =e \rbrace.
\end{equation*}
This article is a continuation of work started in \cite{Edwards};  attempting to obtain precise rates of effective equidistribution for expanding translates of pieces of horospherical orbits.

Under certain conditions, translates of horospheres equidistribute in $\GaG$. Assume that $g_{\RR}$ is $\RR$-diagonalizable, i.e.\ each operator $\Ad_{g_t}$ (on the Lie algebra $\fg$ of $G$) is diagonalizable over $\RR$. Let $\mu$ denote the unique $G$-invariant Borel probability measure on $\GaG$. Then for any bounded continuous function $f$ on $\GaG$ and any probability measure $\lambda$ on $U^+$ which is absolutely continuous with respect to a Haar measure $du$ on $U^+$, we have (cf.,\ e.g.,\ \cite[Proposition 2.2.1]{KleinMarg})
\begin{equation}\label{EQUIDIST}
\lim_{t\rightarrow-\infty} \int_{U^+} f(xug_t)\,d\lambda(u)=\int_{\GaG}f\,d\mu\qquad \forall x\in \GaG.
\end{equation}
One of the most common ways of proving equidistribution statements similar to \eqref{EQUIDIST} is by making use of the ubiquitous ``Margulis' thickening technique", the behind-lying idea of which originates in the thesis of Margulis, cf.\ \cite{Margulis}. By imposing certain restrictions on $g_{\RR}$, $f$, and $\lambda$, \eqref{EQUIDIST} can be made effective: from \cite[Proposition 2.4.8]{KleinMarg}, if the action of $g_{\RR}$ is exponentially mixing, we have that for $f\in C^{\infty}_c(\GaG)$ and $\chi\in C_c^{\infty}(U^+)$,
\begin{equation*}
\left| \int_{U^+}\chi(u) f(xug_t)\,du-\int_{U^+} \chi\,du\int_{\GaG}f\,d\mu \right|\ll_{G,f,\chi,x} e^{\delta t} \qquad \forall t\leq 0,
\end{equation*}
for some explicit $\delta>0$. In this article, we consider the problem of giving a precise bound for this difference, with particular focus on obtaining as great a rate of exponential decay as possible. Our results follow in a line of work by Hejhal \cite{Hejhal}, Str\"ombergsson \cite{Strom1}, and S\"odergren \cite{Sod} studying this type of question on  hyperbolic manifolds, i.e.\ establishing corresponding equidistribution statements for spaces $\Gamma\backslash \SO(n,1)/\SO(n)$. The results mentioned above make extensive use of the spectral theory of automorphic forms on hyperbolic space, and consider only the equidistribution of translates of pieces of \emph{closed} horospheres. The connection between the spectral theory of automorphic forms, in particular Eisenstein series, and the equidistribution of closed horospheres goes back to Selberg (unpublished work), Zagier \cite{Zagier}, and Sarnak \cite{Sarnak}, who studied the rate of equidistribution for translates of entire closed horocycles in the case $G=\SL(2,\RR)$.

In \cite{Strom}, Str\"ombergsson, while generalizing results of Burger \cite{Burger90} on precise equidistribution statements for horospherical orbits on $\GaG$ in the case $G=\SL(2,\RR)$, observed that the method used there can be used to strengthen the results of \cite{Strom1} regarding the equidistribution of translates of pieces of horospheres (cf.\ \cite[Remark 3.4]{Strom}). We note that Flaminio and Forni \cite{FlamForn} also proved precise equidistribution results in the case $G=\SL(2,\RR)$, cf.\ \cite[Theorems 1.7 and 5.14]{FlamForn}. The methods of \cite{FlamForn} are somewhat different to those of \cite{Burger90,Strom}.

The main tool of \cite{Burger90,Strom} is a representation-theoretic method first developed by Burger in \cite{Burger90}. At the heart of this method is an identity, \cite[Lemma 1]{Burger90}, associating the action of $U^+$ with that of $g_{\RR}$ in an arbitrary irreducible unitary representation. In \cite{Edwards}, we used a similar identity in the case $G=\SL(2,\CC)$ to prove precise effective equidistribution results similar to those of \cite{Hejhal, Sod, Strom1}, though for translates of pieces of \emph{all} horospherical orbits; i.e.\ not just the closed ones. It is this method that we now develop for more general groups $G$.  

\subsection{Main Results} From now on we let $G$ be a connected semisimple Lie group with finite center, and $\Gamma$ a lattice in $G$ satisfying the assumptions of Langlands (cf.\ \cite[Chapter 2]{Langlands}, \cite[Chapter 2]{OsborneWarner}). The one-parameter subgroup $g_{\RR}$ is assumed to be $\RR$-diagonalizable throughout. We also fix an element $Y\in\fg$ such that
\begin{equation*}
g_t=\exp(tY)\qquad\forall t\in \RR.
\end{equation*}
These assumptions ensure that there exists a parabolic subgroup $P$ with Langlands decomposition $P=NAM$ (cf.\ \cite[Chapter VII.4]{Knapp2}) such that $g_{\RR}\subset A$ and $U^+=N$; we will review the structure theory of $G$ and its parabolic subgroups in detail in Section \ref{Struc}.

As previously mentioned, our method of proof is heavily representation-theoretic. The previous uses of this method \cite{Burger90, Edwards, Strom} are all restricted to the cases $G=\SL(2,\RR)$ and $\SL(2,\CC)$, and make use of the classification of the unitary dual of $G$, as well as the decomposition of the right-regular representation of $G$ on $L^2(\GaG)$, denoted $\rho$, into irreducible unitary representations. In these cases, this information allows one to relate the rate of equidistribution with the spectrum of the Laplace operator on the hyperbolic orbifold $\GaG/K$, where $K=\SO(2)$ or $K=\mathrm{SU}(2)$. For more general groups $G$, a complete classification of the unitary dual is not available to us. Furthermore, the decomposition of $(\rho,L^2(\GaG))$ is somewhat more complicated, cf.\ \cite{Langlands}. For these reasons, we instead compare the equidistribution of the relevant translates with the decay of the matrix coefficients of $g_{\RR}$. The main result of this article is that translates of pieces of horospheres equidistribute with the same exponential rate as the matrix coefficients of $g_{\RR}$ decay. As far as we are aware, all previous results on the effective equidistribution of these types of translates either give a worse  rate, cf.,\ eg.,\ \cite[Proposition 2.4.8]{KleinMarg}, or are restricted to groups $G$ of real rank one (and focus mainly on pieces of closed horospheres), cf.\ \cite{Burger90,Edwards,FlamForn,Hejhal,Sarnak,Sod,Strom1,Strom,Zagier}.

In order to state our main result, we fix a maximal compact subgroup $K$ of $G$, and make the following definition:
\begin{defn}\label{RATEDEF}
A number $\eta\in \RR_{>0}$ is said to be a \emph{rate of decay} for the matrix coefficients of  $g_{\RR}$ in a unitary representation $(\pi,\scrH)$ of $G$ if there exist $C,\,q\geq 0$ such that for all $K$-finite vectors $\vecu,\vecv\in\scrH$ and $t\in\RR$,
\begin{equation}\label{DECAYDEF}
|\langle \pi(g_t)\vecu,\vecv\rangle|\leq C \|\vecu\|_{\scrH}\|\vecv\|_{\scrH} \left(\dim(\pi(K)\vecu)\dim(\pi(K)\vecv)\right)^{1/2}(1+|t|^q)e^{-\eta|t|}.
\end{equation}
\end{defn}
The decay of matrix coefficients is one of the most of studied aspects of the unitary representations of $G$; cf.,\ eg.,\ \cite{Casselman, Cowling, Howe, Oh}. We let $L^2(\GaG)_0$ denote the orthogonal complement of the one-dimensional subspace of $L^2(\GaG)$ consisting of the constant functions. Rates of decay of matrix coefficients for $(\rho,L^2(\GaG)_0)$ have deep connections to many important problems in the study of lattices in Lie groups.

We must also introduce Sobolev norms and spaces; these will be important for quantifying the regularity of the function $f$ and the measure $\lambda$ in \eqref{EQUIDIST}. For an open subset $B$ of $U^+$, and a choice of basis for the Lie algebra of $U^+$, we define a Sobolev norm $\|\cdot\|_{W^{k,p}(B)}$ on $C_c^{\infty}(B)$ by letting $\|\chi\|_{W^{k,p}(B)}$ denote the sums of the $L^p$ norms of all the Lie derivatives of $\chi$ corresponding to monomials in our chosen basis with order not greater than $k$. The closure of $C_c^{\infty}(B)$ with respect to the norm $\|\cdot\|_{W^{k,p}(B)}$ is denoted $W^{k,p}_0(B)$. 

In an analogous way, we choose a basis for the Lie algebra of $G$, and use this to define Sobolev norms $\|\cdot\|_{\scrS^m(\GaG)}$ (this is done in greater detail in Section \ref{Reptheorysec}): for a smooth function $f$ on $\GaG$ such that $f$ and all its derivatives are in $L^2(\GaG)$, let $\|f\|_{\scrS^m(\GaG)}^2$ denote the sum of the squares of the $L^2$ norms of $f$ and all its Lie derivatives corresponding to monomials in our chosen basis with order not greater than $m$. The closure of the space of functions $f\in C^{\infty}(\GaG)\cap L^2(\GaG)$ such that $\|f\|_{\scrS^m(\GaG)}<\infty$ with respect to $\|\cdot\|_{\scrS^m(\GaG)}$ is denoted $\scrS^m(\GaG)$. 

Related to Sobolev norms is the invariant height function $\scrY_{\Gamma}$. This function, in conjunction with Sobolev norms, is used to give pointwise bounds for functions in $\scrS^m(\GaG)$. If $\Gamma$ is not cocompact, $\scrY_{\Gamma}(x)$ provides a measure of how far into a cusp the point $x\in\GaG$ is. A stringent definition of $\scrY_{\Gamma}$ is given in Section \ref{SOBINEQSEC}.

Our main result can now be stated:

\begin{thm}\label{maintheorem1}
Let $(\rho,V)$ be a subrepresentation of $(\rho,L^2(\GaG)_0)$, and $\eta$ a rate of decay for the matrix coefficients of $g_{\RR}$ in $(\rho,V)$. For any open, relatively compact subset $B$ of $U^+$ there exist $C=C(G,\Gamma,g_{\RR},B,\eta)>0$, $W>0$, $m_1,m_2\in\NN$ such that 
\begin{equation}\label{mainthmbd}
\left| \int_{U^+} \chi(u)f(xug_{t}) \,du\right|\leq C \|\chi\|_{W^{m_2,\infty}(B)}\|f\|_{\scrS^{m_1}(\GaG)}\scrY_{\Gamma}(x)^2(1+|t|^W)e^{\eta t}
\end{equation}
for all $t \leq 0$, $f\in V\cap\scrS^{m_1}(\GaG)$, $\chi\in W^{m_2,\infty}_0(B)$, and $x\in\GaG$.
\end{thm}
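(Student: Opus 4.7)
The strategy is to adapt the representation-theoretic identity of Burger \cite{Burger90}, as developed in the rank-one cases \cite{Burger90,Edwards,Strom}, to the general semisimple setting, thereby transferring the exponential decay of matrix coefficients of $g_{\RR}$ directly to the horospherical integral with no loss in rate. The key intermediate result to establish is an identity of schematic form
\begin{equation*}
\int_{U^+}\chi(u)\,\rho(u)\varphi\,du \;=\; \int_A R_\chi(a)\bigl(\rho(a)\varphi\bigr)\,da,
\end{equation*}
valid for smooth vectors $\varphi$ in $V$, where $R_\chi(a)$ is an operator kernel built from a Fourier-type transform of $\chi$. Using the $\Ad(Y)$-eigenspace decomposition $\fn = \bigoplus_\alpha \fn_\alpha$ (on which $\Ad(g_t)$ acts by the scalar $e^{\alpha(Y)t}$), one constructs $R_\chi(a)$ by Fourier-expanding $\chi$ on each graded piece; the non-abelian structure of $N$ contributes only lower-order correction terms that do not affect the leading scaling behavior.

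Granting the identity, setting $\varphi = f$ and precomposing with $g_t$ yields
\begin{equation*}
\int_{U^+}\chi(u)f(xug_t)\,du \;=\; \int_A \bigl(R_\chi(a)\rho(ag_t)f\bigr)(x)\,da.
\end{equation*}
To exploit matrix coefficient decay, I would regularize the pointwise evaluation at $x$ by pairing against a smooth bump $\Psi_x\in L^2(\GaG)$ concentrated near $x$ (with orthogonal projection onto $V$), rewriting the right side (up to a Sobolev-controlled error) as $\int_A \bigl\langle \rho(ag_t) R_\chi(a)f,\,\Psi_x\bigr\rangle\,da$. Applying the hypothesis on decay of matrix coefficients in $V$, extended from $K$-finite to smooth vectors in the standard way (at the cost of Sobolev factors in the $K$-direction, cf.\ \cite{Cowling}), bounds each integrand by
\begin{equation*}
C\,\|R_\chi(a)f\|_{\scrS^{m_1}}\,\|\Psi_x\|_{\scrS^{m_1}}\,(1+|t|^q)\,e^{\eta t},\qquad t\leq 0,
\end{equation*}
while the $a$-integral converges thanks to the rapid decay of $R_\chi(a)$, a consequence of the smoothness of $\chi$. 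The dependence on $\chi$ is packaged via estimates of the form $\|R_\chi(a)f\|_{\scrS^{m_1}}\ll_a \|\chi\|_{W^{m_2,\infty}(B)}\|f\|_{\scrS^{m_1}}$, and each of the two pointwise evaluations at $x$ (for $f$ and for the regularizing bump) contributes a factor of $\scrY_{\Gamma}(x)$ via the Sobolev inequality on $\GaG$, producing the claimed $\scrY_{\Gamma}(x)^2$.

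\textbf{Main obstacle.} The principal difficulty is constructing and controlling the Burger--Str\"ombergsson-type identity in the general semisimple case, where $U^+$ is non-abelian and $\Ad(Y)$ may possess several distinct positive eigenvalues $\alpha(Y)$. One must verify that $R_\chi(a)$ has sharp enough Sobolev and integrability properties (uniformly across the $K$-types arising in $V$) so that neither the $a$-integration nor the $K$-finite-to-smooth extension degrades the exponential rate $e^{\eta t}$; all losses must be absorbed into the polynomial factor $(1+|t|^W)$. A secondary but delicate point is arranging the regularization of the evaluation at $x$ so that the error is absorbed without introducing a scale parameter whose optimization would cost exponential rate --- which is precisely what a genuine Burger-type identity (as opposed to a naive thickening) allows us to avoid, and why this method yields the full rate $\eta$ rather than the weaker rate produced by the thickening argument of \cite{KleinMarg}.
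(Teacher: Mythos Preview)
Your high-level strategy --- transfer the decay of matrix coefficients directly to the horospherical average via a Burger-type identity, without the rate loss inherent in thickening --- is correct, and this is indeed what the paper does. However, the proposal contains a genuine gap at exactly the point you flag as the ``main obstacle'': you do not construct the identity, and the Fourier-analytic shape you suggest for it is not the route the paper takes (and it is unclear it would work without further ideas).

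The paper does \emph{not} build an identity of the form $\int_{U^+}\chi(u)\rho(u)\varphi\,du = \int_A R_\chi(a)\rho(a)\varphi\,da$ via Fourier expansion on the graded pieces of $\fn$. Instead, the key new input is a Lie-algebra identity (Lemma~\ref{LIEIDENT}): using the Harish-Chandra isomorphism one finds central elements $Z_0,\dots,Z_{W-1}\in\Zg$ with
\[
Y^W + Y^{W-1}Z_{W-1} + \cdots + Z_0 \;\in\; \fn\,\Ug.
\]
In an irreducible $(\pi,\scrH)$ the $Z_i$ act as scalars, so this turns into a $W$-th order linear ODE in $t$ for $\scrI_{\vecv}^{\chi}(t)=\int_{U^+}\chi(u)\pi(ug_t)\vecv\,du$, with inhomogeneity $-\sum_j e^{\alpha_j t}\scrI^{X_j\chi}_{d\pi(U_j)\vecv}(t)$ coming from the $\fn\,\Ug$ side (each $X_j\chi$ a derivative of $\chi$). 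Solving this ODE and using matrix-coefficient decay to kill the constants of integration yields an integral representation (Proposition~\ref{BurgerProp1}); then one \emph{iterates} the ODE on the inhomogeneous terms (Proposition~\ref{BurgerProp2}) --- this is where the smoothness of $\chi$ is spent --- until the accumulated exponential weight exceeds $\eta$. The iteration depth is $\lfloor 2\eta/\alpha\rfloor$ with $\alpha=\min_j\alpha_j$, which is why arbitrarily high $W^{m_2,\infty}$ regularity of $\chi$ is needed when $Y$ is close to a wall. None of this appears in your sketch; the Fourier approach you propose would have to contend with the non-abelian structure of $N$ and with uniformity over the direct-integral decomposition, and you have not indicated how.

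Your treatment of the pointwise evaluation is also off. You propose to pair against a bump $\Psi_x$, but this reintroduces a scale parameter and risks the very rate loss you are trying to avoid. The paper instead keeps the identity at the level of $L^2(\GaG)$-valued functions, passes to the direct-integral decomposition, assembles intertwining operators $T_{\bk}(t,s)$, $S_{\bj,i}(t)$ from the $\zeta$-dependent ODE data, and only \emph{after} obtaining the final $L^2$ identity applies the Sobolev inequality $|f(x)|\ll \scrY_\Gamma(x)\|f\|_{\scrS^m}$ pointwise inside the $ds$-integral. The second factor of $\scrY_\Gamma(x)$ then comes not from a bump but from the estimate $\int_B \scrY_\Gamma(xug_s)\,du \ll \scrY_\Gamma(x)^2$ (Corollary~\ref{scrYAVG}), proved via a wavefront-type thickening and $\scrY_\Gamma\in L^1(\GaG)$. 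This mechanism is what produces the $\scrY_\Gamma(x)^2$ without any optimization in a scale parameter.
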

\begin{remark}
While it is possible to keep track of, and explicitly state, the dependency of the implied constants on $\eta$ and $g_{\RR}$, it is somewhat tedious to do so. We simply note that the constants are inversely proportional to the quotient of $\eta$ and the ``distance" of $Y$ to the walls of the Weyl chamber in which it lies. It is also possible to make the dependency on $B$ completely explicit; it is related to the ``covering number" of $B$ with respect to some fixed neighbourhood of the identity in $G$.
\end{remark}
\begin{remark}
If we restrict to the case of translates of entire closed horospheres, i.e.\ when $\Gamma\cap U^+$ is a lattice in $U^+$, $x=\Gamma e$, and $\chi$ is suitably chosen so that the integral in the left-hand side of \eqref{mainthmbd} may be written as $\int_{\Gamma\cap U^+\backslash U^+} f(\Gamma u g_t)\,du$, simplifications occur in the proof of Theorem \ref{maintheorem1} that produce a result similar in nature to \cite[Theorem 1]{Sarnak}. Also, if $G$ is algebraic and $\Gamma$ is arithmetic, the proof of Theorem \ref{maintheorem1} may once again be adjusted so as to permit $\chi\in W^{m_2,2}_0(B)$, with a corresponding change of norm in the right-hand side of \eqref{mainthmbd}. Both these points are discussed further in Remarks \ref{ClosedHoros} and \ref{L2remark} below.
\end{remark}

As previously stated, the main focus of this article has been the rate of exponential decay in Theorem \ref{maintheorem1}. For this reason, we have assumed that $f$ and $\chi$ are as smooth as needed to obtain this rate. Using Proposition \ref{BurgerProp2} below, one sees that our proof may be slightly adjusted so as to also give an exponential rate of decay for corresponding averages when the test functions $\chi$ are restricted to $W_0^{k_0,\infty}(B)$ for any $k_0\geq 1$. The rate obtained for such $\chi$ may, however, be much smaller than the rate of decay of the matrix coefficients of $g_{\RR}$. Furthermore, it is unclear whether this rate is greater than that given by using approximations of $\chi\in W^{k_0,\infty}_0(B)$ by elements of $W^{m_2,\infty}_0(B)$ together with Theorem \ref{maintheorem1} ($m_2$ being as in Theorem \ref{maintheorem1}). We also note that the results of \cite{Burger90, Edwards, Strom} give variations of Theorem \ref{maintheorem1} (for $G=\SL(2,\RR)$ or $\SL(2,\CC)$) in the case that $\chi$ is an indicator function of a subset $B\subset U^+$. It seems possible that our method of proof may also be used to directly (i.e.\ without having to use approximations by Sobolev functions) give effective equidistribution statements for such $\chi$ for general groups $G$ as well, though not without complications; compare Corollary \ref{scrYAVG} below with \cite[Proposition 6]{Edwards}. The rate one obtains in this manner, however, seems to be bounded by $\min_{\alpha\in \Sigma^+(\fg,\fa)} \alpha(Y)$, where $\Sigma^+(\fg,\fa)$ is the set of positive restricted roots corresponding to the Langlands decomposition $P=U^+AM$. For the cases $G=\SL(2,\RR)$ or $ \SL(2,\CC)$, this value is greater than or equal to the rate of decay for the matrix coefficients of $g_{\RR}$ in any non-tempered unitary representation of $G$ (this is due to the fact that in these cases $G$ has real rank one and the dimension of the restricted root spaces is less than or equal to two). This is not the case though for more general groups $G$, and at present we do not see how one might prove a result with a rate of exponential decay similar to that of Theorem \ref{maintheorem1} for indicator functions.

Shifting perspective slightly, we now instead fix a horospherical subgroup and consider the equidistribution of translates of a piece of a horospherical orbit by elements from an entire positive (closed) Weyl chamber. In order to state our second theorem, we fix a parabolic subgroup $P$ of $G$ with Langlands decomposition $P=NAM$. Recall that the subgroup $N$ is the unipotent radical of $P$, and $A$ is simply connected and abelian. We define the following subset of $A$:
\begin{equation*}
A^+=\lbrace a\in A\,:\, \lim_{l\rightarrow \infty} a^{-l} n a^{l}=e\qquad \forall n \in N\rbrace,
\end{equation*}
and denote the topological closure of $A^+$ by $\overline{A^+}$. The Lie algebra of $A$ is denoted by $\fa$. Since $A$ is simply connected, the exponential map $\exp:\fa\rightarrow A$ is a diffeomorphism (with inverse $\log:A\rightarrow \fa$), and we let $\fa^+=\log(A^+)$, hence $\overline{\fa^+}=\log(\overline{A^+})$. Note that $N$ is the expanding horospherical subgroup with respect to any one-parameter subgroup $\exp(\RR H)$, where $H\in\fa^+$. Moreover, every horospherical subgroup of $G$ is the unipotent radical of some parabolic subgroup. 

\begin{defn}\label{UNIRATEDEF}
A norm $\|\cdot\|$ on $\fa$ is said to \emph{control the decay} of the matrix coefficients of $\overline{A^+}$ in a unitary representation $(\pi,\scrH)$ of $G$ if there exist $C,\,q\geq 0$ such that for all $K$-finite vectors $\vecu,\vecv\in\scrH$ and all $H\in\overline{\fa^+}$,
\begin{equation}\label{UNIFORMMATRIXCOEFFBD}
|\langle \pi(\exp(H))\vecu,\vecv\rangle|\leq C \|\vecu\|\|\vecv\| \left(\dim(\pi(K)\vecu)\dim(\pi(K)\vecv)\right)^{1/2}(1+\|H\|^q)e^{-\|H\|}.
\end{equation}
\end{defn}

The following theorem shows that the translates of pieces of $N$ equidistribute with the same speed as the matrix coefficients decay, essentially uniformly over all $\overline{A^+}$:
\begin{thm}\label{maintheorem2}
Let $(\rho,V)$ be a subrepresentation of $(\rho,L^2(\GaG)_0)$, and $\|\cdot\|$ a norm on $\fa$ that controls the decay of matrix coefficients of $\overline{A^+}$ in $(\rho,V)$. There then exist constants $c$ (depending only on $\fa$ and $\|\cdot\|$) and $W$ (depending only on $G$ and $A$) such that for any relatively compact subset $B\subset N$ and $0<\epsilon<\min\lbrace 1,(2c)^{-1}\rbrace$, there exists $C=C(G,\Gamma,A,B,\|\cdot\|,\epsilon)$ such that if $m_1=W(\lfloor\frac{2}{\epsilon}\rfloor+3)+1+\lceil\frac{\dim G}{2}\rceil $ and $m_2=\lfloor \frac{2}{\epsilon}\rfloor$, we have
\begin{equation}\label{thm2bdd}
\left| \int_{N} \chi(n)f\big(xn\exp(-H)\big) \,dn\right|\!\leq \!C \|\chi\|_{W^{m_2,\infty}(B)}\|f\|_{\scrS^{m_1}(\GaG)}\scrY_{\Gamma}(x)^2(1+\|H\|^{W+1})e^{-(1-c\epsilon)\|H\|}
\end{equation}
for all $H\in \overline{\fa^+}$, $f\in V\cap\scrS^{m_{1}}(\GaG)$, $\chi\in W^{m_{2},\infty}_0(B)$, and $x\in\GaG$.
\end{thm}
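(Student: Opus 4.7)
The plan is to deduce Theorem \ref{maintheorem2} from Theorem \ref{maintheorem1} by a perturbation argument: given $H\in \overline{\fa^+}$, push the exponent slightly into the interior of the Weyl chamber before invoking Theorem \ref{maintheorem1}. The obstacle, flagged in the remark following Theorem \ref{maintheorem1}, is that the constants there degrade as the direction $Y$ approaches a wall of $\fa^+$; since a generic $H$ may lie on such a wall, a direct application does not yield a bound uniform across $\overline{\fa^+}$, and the perturbation must be calibrated so that both the degradation of those constants and the blow-up of Sobolev norms under translation remain controlled.

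Concretely, I would fix a reference element $Y_0\in\fa^+$ comfortably away from all walls (e.g.\ a normalization of the sum of the fundamental coweights); the case of bounded $\|H\|$ is handled by the pointwise Sobolev bound $|f(y)|\ll\|f\|_{\scrS^{m_1}}\scrY_\Gamma(y)^2$. For large $\|H\|$, set
\begin{equation*}
\tilde H := H + c_0\epsilon^2\|H\|\,Y_0\in\fa^+
\end{equation*}
for a small absolute constant $c_0$ (depending on $Y_0$, $W$ and $\|\cdot\|$). Then $\tilde H$ is at distance $\gtrsim \epsilon^2\|H\|$ from the walls of $\fa^+$, so $Y := \tilde H/\|\tilde H\|$ is at distance $\gtrsim \epsilon^2$ from the walls of the unit sphere in $\overline{\fa^+}$, and by the remark following Theorem \ref{maintheorem1} the constants arising from an application of that theorem to $g_t = \exp(tY)$ degrade only polynomially in $\epsilon^{-1}$.

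Since $\fa$ is abelian one has $\exp(-H) = \exp(-\tilde H)\exp(c_0\epsilon^2\|H\|Y_0)$. Setting $g := \exp(c_0\epsilon^2\|H\|Y_0)$ and $f_g(y) := f(yg)$, the integral in \eqref{thm2bdd} equals $\int_N\chi(n)f_g(xn\exp(-\tilde H))\,dn$, to which I apply Theorem \ref{maintheorem1} with $\eta=1$ and $t=-\|\tilde H\|$, producing an estimate of the form
\begin{equation*}
C\epsilon^{-O(1)}\,\|\chi\|_{W^{m_2,\infty}(B)}\,\|f_g\|_{\scrS^{m_1}(\GaG)}\,\scrY_\Gamma(x)^2\bigl(1+\|\tilde H\|^W\bigr)e^{-\|\tilde H\|}.
\end{equation*}
The final step is to replace $\|f_g\|_{\scrS^{m_1}}$ by $\|f\|_{\scrS^{m_1}}$: the identity $X(f_g) = (\Ad(g^{-1})X\cdot f)_g$ for left-invariant $X$, combined with right-$G$-invariance of the $L^2$ norm on $\GaG$ and the bound $\|\Ad(g^{-1})\|_{\mathrm{op}}\leq e^{c_0\epsilon^2\|H\|\,\lambda_0}$ (with $\lambda_0 := \max_{\alpha}|\alpha(Y_0)|$), yields $\|f_g\|_{\scrS^{m_1}}\ll e^{Cc_0\epsilon^2\|H\|\,m_1}\|f\|_{\scrS^{m_1}}$. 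Since $m_1\asymp\epsilon^{-1}$ by the hypothesis on its size, the exponent is $\asymp c_0\lambda_0\epsilon\|H\|$, and combined with $\|\tilde H\|\geq \|H\|(1-O(\epsilon^2))$ from the reverse triangle inequality this produces the asserted decay $e^{-(1-c\epsilon)\|H\|}$, with $c$ depending only on $\fa$ and $\|\cdot\|$ after $W$ is absorbed into $c_0$.

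The hard part will be this three-way balancing: the perturbation must be large enough ($\gtrsim \epsilon^2\|H\|$) that the Theorem \ref{maintheorem1} constant remains polynomial in $\epsilon^{-1}$, small enough ($\lesssim \epsilon/m_1\asymp \epsilon^2$) that the Sobolev cost $e^{Cc_0\epsilon^2\|H\|m_1}$ is only $e^{O(\epsilon\|H\|)}$, and compatible through Theorem \ref{maintheorem1}'s internal link $m_1 = W(m_2+3)+1+\lceil\dim G/2\rceil$ with $m_2\asymp \epsilon^{-1}$. The explicit dependencies stated in Theorem \ref{maintheorem2} — in particular the $(1-c\epsilon)$ factor and the quantitative forms of $m_1$ and $m_2$ — are precisely what make this calibration possible. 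Some additional care is also required to reduce to a finite collection of generic reference directions $Y_0$ covering the sphere in $\overline{\fa^+}$, but this introduces only an $\epsilon^{-O(1)}$ factor that can be absorbed into $C$.
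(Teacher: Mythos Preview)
Your balancing argument contains a genuine circularity that cannot be resolved. The Sobolev order $m_1$ required by Theorem \ref{maintheorem1} is not fixed in advance: from the proof (via Proposition \ref{BurgerProp2}) one has $m_2=k_0=\lfloor 2\eta/\alpha\rfloor$ with $\alpha=\min_j\alpha_j(Y)$, and then $m_1\asymp Wk_0$. If you perturb $H$ by $\delta\|H\|Y_0$ to produce $\tilde H$, the normalized direction $Y=\tilde H/\|\tilde H\|$ satisfies $\alpha\asymp\delta$, hence $m_1\asymp W/\delta$. The Sobolev cost of translating $f$ by $g=\exp(\delta\|H\|Y_0)$ is then $\|f_g\|_{\scrS^{m_1}}\ll e^{C\delta\|H\|\cdot m_1}\|f\|_{\scrS^{m_1}}\asymp e^{CW\|H\|}\|f\|_{\scrS^{m_1}}$, independent of $\delta$. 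This destroys the gain $e^{-\|\tilde H\|}$ regardless of how you choose $\delta$. In your write-up you take $\delta=c_0\epsilon^2$ and then assert $m_1\asymp\epsilon^{-1}$ ``by the hypothesis on its size'', but the hypothesized $m_1=W(\lfloor 2/\epsilon\rfloor+3)+\cdots$ is only adequate for Theorem \ref{maintheorem1} when $\alpha\gtrsim\epsilon$, not $\alpha\asymp\epsilon^2$; you are using two incompatible values of $m_1$ in the same estimate.

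The paper avoids this trap by \emph{not} perturbing into the interior. Instead it decomposes $H=H_\epsilon'+J_\epsilon'$ where $H_\epsilon'$ stays on the same face of $\overline{\fa^+}$ as $H$ but is $\epsilon$-away from lower-dimensional subfaces (Lemma \ref{EPSILONH}), and then builds a version of the Burger machinery (Lemma \ref{LIEIDENT2}, Proposition \ref{BurgerProp3}) that is uniform over all such $H_\epsilon'$, including those on walls. The key point is that Lemma \ref{LIEIDENT2} arranges $U_j(H)=0$ whenever $\beta_j(H)=0$, so the differential equation \eqref{IHODE} still has right-hand side bounded by $e^{\alpha_0(H)t}$ with $\alpha_0(H)\geq\epsilon$ even on a wall. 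The remainder $J_\epsilon'$ is handled not by translating $f$ (which would cost $e^{m_1\|J_\epsilon'\|}$) but by the fundamental-theorem-of-calculus identity \eqref{MAINHEPSIDENT}, which costs only a single application of $d\rho(J_\epsilon)$ and hence one Sobolev order, with $\|J_\epsilon\|<1$. This decoupling of the ``wall direction'' from the Sobolev order is precisely what your translation approach cannot achieve.
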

\begin{remark}
As was the case for Theorem \ref{maintheorem1}, in order to give a (relatively) simple statement, we have refrained from stating (and keeping track of) the dependency of the constant $C$ on the various parameters. From the proofs, one sees that it is possible to make many of these dependencies completely explicit. It can also be seen that the constant $W$ is bounded over all choices of $A$ by the order of the Weyl group of $G$. Furthermore, given a set $\scrK\subset\overline{\fa^+}$ whose intersection with $\fa^+$ and the relative interior of any face of $\overline{\fa^+}$ is compact, there exists $\epsilon_{\scrK}>0$ such that if $\epsilon\leq\epsilon_{\scrK}$, \eqref{thm2bdd} holds with $e^{-\|H\|}$ in place of $e^{-(1-c\epsilon)\|H\|}$ for all $H\in\RR\scrK$. Finally, it can be shown that for a given norm $\|\cdot\|$ and compact subset $\scrC\subset \RR^+$, the dependency of the constants $C$ and $c$ with respect to the choice of norm is uniformly bounded over the set $\lbrace \eta\|\cdot\|\,:\, \eta\in\scrC\rbrace$.
\end{remark}

\begin{remark}
Recently, the equidistribution of translates of pieces of $N$ by elements of $G$ that are not necessarily in $\overline{A^+}$ has been studied by a number of authors, cf.\ \cite{DabbsKellyLi, KleinMarg3, KleinShiWeiss, KleinWeiss, MohammadiGolsefidy, ShahWeiss, Shi}. Another line of recent study has been the equidistribution of translates of horospheres in the case where $G$ is of rank one and $\Gamma$ a geometrically finite group, but not necessarily a lattice, cf.,\ eg.,\ \cite{Kim,KontOh,LeeOh,MohammadiOh}. These equidistribution results have applications to a number of problems in number theory. It would be interesting to see if the method developed here can be adapted to these settings.
\end{remark}

\subsection{Outline of Article}
Since our line of proof is quite similar to that of \cite{Edwards}, the general structure of this article is more or less the same. After reviewing the necessary background material in Section \ref{Prelims}, in Section \ref{BURGERSEC} we turn our attention to proving the main technical result required for the proof of Theorem \ref{maintheorem1}: a generalization of ``Burger's formula" \cite[Lemma 1]{Burger90} for general semisimple Lie groups. The first order of business is to establish a Lie algebra identity, Lemma \ref{LIEIDENT}, which will enable us to express an ordinary differential equation (with respect to $t$) for averaging operators, in irreducible unitary representations $(\pi,\scrH)$, of the form $\vecv\mapsto\int_{U^+} \chi(u)\pi(ug_t)\vecv\,du$ (where $\vecv\in\scrH$). We then use this differential equation to give an integral representation of the averaging operators (see Proposition \ref{BurgerProp1}); it is this integral representation that corresponds to \cite[Lemma 1]{Burger90} and \cite[Proposition 4]{Edwards}. Unlike in \cite{Burger90, Edwards}, this integral representation is not sufficient for our purposes, and we must instead make use of the differentiability of $\chi$ and an iterative application of Proposition \ref{BurgerProp1} to obtain an integral identity which suits our needs. This is done in Proposition \ref{BurgerProp2}.

We then proceed to prove the required properties of the invariant height function $\scrY_{\Gamma}$. The main tool here is the reduction theory of lattices in semisimple Lie groups; using this, we prove that $\scrY_{\Gamma}\in L^1(\GaG)$ (Proposition \ref{Ybound} and Corollary \ref{scrYL1cor}). This fact, together with a ``thickening" argument allows us to give a bound $\int_{B} \scrY_{\Gamma}(xug_t)\,du\ll_{\Gamma,B} \scrY_{\Gamma}(x)^2$ for all $x\in \GaG$, see Corollary \ref{scrYAVG} below. Matters here are considerably less complicated than when proving the corresponding results needed when considering ``sharp cut-off functions" $\chi$ in the case $G=\SL(2,\CC)$ (cf.\ \cite[Proposition 6]{Edwards}).

Section \ref{thm1proof} is devoted to the proof of Theorem \ref{maintheorem1}. Having established the necessary prerequisites, the proof is fairly straightforward: the representation $(\rho,V)$ is decomposed into irreducible unitary representations $(\pi,\scrH)$ and the identity \eqref{MAINBURGERIDENT} of Proposition \ref{BurgerProp2} is used in each of these. The bounds provided in Proposition \ref{BurgerProp2} then suffice to exchange the direct integral decomposition with the integrals occurring in \eqref{MAINBURGERIDENT}. These bounds are then used once again, together with Corollary \ref{scrYAVG}, to give the bound stated in Theorem \ref{maintheorem1}.

Finally, in Section \ref{UNIFORMSEC}, we prove Theorem \ref{maintheorem2}. Firstly, for each $\epsilon>0$ we construct certain compact subsets of $\overline{\fa^+}$ on which we will prove that Theorem \ref{maintheorem1} holds uniformly. We then review the results from Sections \ref{BURGERSEC} and \ref{SOBSEC}, making relatively minor adjustments in order to obtain statements which are uniform in a new parameter $H\in\overline{\fa^+}$. The proof of Theorem \ref{maintheorem2} is based on the observation that every $H\in\overline{\fa^+}$ such that $\|H\|>0$ may be decomposed as $H=H_{\epsilon}+J_{\epsilon}$, with $\frac{H_{\epsilon}}{\|H_{\epsilon}\| }$ lying in one of the compact subsets constructed at the beginning of the section. Applying Theorem \ref{maintheorem1} with respect to $Y=\frac{H_{\epsilon}}{\|H_{\epsilon}\| }$ and showing that the contribution from $J_{\epsilon}$ is negligible concludes the proof.
\subsection{Acknowledgements}
The research leading to these results was funded by Swedish Research Council Grant 621-2011-3629. I am grateful to my advisor Andreas Str\"ombergsson for suggesting this problem, as well as for many inspiring discussions and helpful suggestions during this work. I would also like to thank Volodymyr Mazorchuk for an interesting discussion regarding Lemma \ref{LIEIDENT}.
\section{Preliminaries}\label{Prelims}
\subsection{Structure Theory}\label{Struc}
We start by reviewing the structure theory of (real) semisimple Lie groups and their parabolic subgroups; our source for this is \cite[Chapters 6 and 7]{Knapp2}, and we summarize some of the results there that we will require.

Recall that we assume throughout that $G$ is a connected Lie group with finite center whose Lie algebra $\fg$ is semisimple. We have also fixed a maximal compact subgroup $K$ of $G$, the Lie algebra of which is denoted $\fk$. Let $\theta$ be a Cartan involution of $\fg$ such that $\fk$ is the $-1$-eigenspace for $\theta$. Denoting the $+1$-eigenspace of $\theta$ by $\fp$, we have the corresponding Cartan decomposition $\fg=\fp\oplus\fk$; recall that the direct sum is orthogonal with respect to the Killing form $B=B(\cdot,\cdot)$ of $\fg$. We now choose and fix once and for all a maximal abelian subalgebra $\fa_0$ of $\fp$. Note that any other choice of maximal abelian subalgebra of $\fp$ is of the form $\Ad_k\fa_0$ for some $k\in K$. Let $\Sigma(\fg,\fa_0)$ denote the restricted roots of $\fa_0$; this is the set of all $\lambda\in\fa_0^*\setminus\lbrace 0\rbrace$ such that the corresponding root space $\fg_{\lambda}=\lbrace X\in\fg\,:\,\ad_HX=\lambda(H)\quad\forall H\in\fa_0\rbrace$ is non-zero. A ``notion of positivity on $\fa^*$" (cf.\ \cite[Chapter II.5]{Knapp2}) is fixed, and the set of positive restricted roots for $\fa_0$ is denoted $\Sigma^+(\fg,\fa_0)$. Let $\Sigma_0^+(\fg,\fa_0)\subset \Sigma^+(\fg,\fa_0)$ denote the set of simple (positive) restricted roots; recall that this is the unique subset such that every element of $\Sigma^+(\fg,\fa_0)$ may be written as a non-negative integral linear combination of elements $\Sigma_0^+(\fg,\fa_0)$. A useful fact is that the elements of $\Sigma_0^+(\fg,\fa_0)$ constitute a basis of $\fa_0^*$. Letting $\fn_0=\bigoplus_{\lambda\in\Sigma^+(\fg,\fa_0)}\fg_{\lambda}$ gives rise to an Iwasawa decomposition of $\fg$:
\begin{equation*}
\fg=\fn_0\oplus \fa_0\oplus \fk.
\end{equation*}
We let $A_0$ be the analytic subgroup of $G$ corresponding to $\fa_0$ and $N_0$ the analytic subgroup corresponding to $\fn_0$. Both $A_0$ and $N_0$ are simply connected, $A_0$ is abelian, and $N_0$ is $\Ad$-unipotent. The corresponding Iwasawa decomposition of $G$ is $G=N_0A_0K$. Recall that the map $N_0\times A_0 \times K\rightarrow G$, $(n,a,k)\mapsto nak$ is a diffeomorphism onto.

We now turn our attention to the parabolic subgroups of $G$. Here we follow \cite[Chapter V.5]{Knapp1}. The subgroup $P_0=N_G(N_0)$ is fixed as the standard minimal parabolic subgroup. Recall that then any closed subgroup of $G$ containing $P_0$ is called a standard parabolic subgroup, and a parabolic subgroup is any subgroup of $G$ which is conjugate to a standard parabolic subgroup. Letting $\fm_0=Z_{\fk}(\fa_0)$ and $M_0=Z_K(\fa_0)$, we obtain a Langlands decomposition of $P_0$:  $P_0=N_0A_0M_0$. If $\fq$ is the Lie algebra of a parabolic subgroup $P$, then there is a Langlands decomposition of $\fq$:
\begin{equation*}
\fq=\fn\oplus\widetilde{\fa}\oplus\widetilde{\fm},
\end{equation*}
where $\fn$, $\widetilde{\fa}$, and $\widetilde{\fm}$ are the subalgebras of $\fg$ which are uniquely defined by the following properties:
\begin{enumerate}[i)]
\item $\fn$, $\widetilde{\fa}$, and $\widetilde{\fm}$ are orthogonal with respect to the inner product on $\fg$ defined by $\langle X_1,X_2\rangle:=-B(X_1,\theta X_2)$ .
\item $\widetilde{\fa}\oplus \widetilde{\fm}=\fq\cap \theta \fq$.
\item $\widetilde{\fa}=\fp\cap Z_{\widetilde{\fa}\oplus\widetilde{\fm}}$.
\end{enumerate} 
The Langlands decomposition described above may be used to give other Langlands decompositions of $\fq$: let $\fa=\Ad_k\widetilde{\fa}$ for some $k\in N_K(P)$ and $\fm=\lbrace J\in Z_{\fg}(\fa)\,:\, \langle H,J\rangle=0\quad \forall H\in \fa\rbrace$. We then have another Langlands decomposition of $\fq$:
\begin{equation*}
\fq=\fn\oplus\fa\oplus\fm.
\end{equation*}
Recall that $\fa$ is abelian, $\fn$ is nilpotent, and $\fa\oplus\fm$ normalizes $\fn$. Moreover, if we let $\Sigma(\fg,\fa)$ denote the restricted roots of $\fa$, i.e.\ the elements $\lambda\in\fa^*\setminus\lbrace 0 \rbrace$ such that $\fg_{\lambda}=\lbrace X\in\fg\,:\, \ad_H X=\lambda(H)=X\quad \forall H\in \fa\rbrace \neq \lbrace 0 \rbrace$, there exists a notion of positivity on $\fa^*$ such that
\begin{equation*}
\fn=\bigoplus_{\lambda\in\Sigma^+(\fg,\fa)} \fg_{\lambda},
\end{equation*}
$\Sigma^+(\fg,\fa)$ being the set of positive elements in $\Sigma(\fg,\fa)$. As was the case for $\fa_0$, there then exists a unique subset $\Sigma_0^+(\fg,\fa)\subset\Sigma^+(\fg,\fa)$ such that $\Sigma_0^+(\fg,\fa)$ forms a basis for $\fa^*$, and the coordinates of every element of $\Sigma^+(\fg,\fa)$ with respect to this basis are non-negative integers. An important subset of $\fa$ is the positive Weyl chamber $\fa^+$:
\begin{equation*}
\fa^+=\lbrace H\in\fa\,:\, \lambda(H)>0\quad\forall \lambda\in \Sigma^+(\fg,\fa)\rbrace=\lbrace H\in\fa\,:\, \lambda(H)>0\quad\forall \lambda\in \Sigma_0^+(\fg,\fa)\rbrace.
\end{equation*}
The (topological) closure of $\fa^+$ is denoted $\overline{\fa^+}$.
Now defining
\begin{equation*}
\fn^-=\theta \fn= \bigoplus_{\lambda\in\Sigma^+(\fg,\fa)} \fg_{-\lambda},
\end{equation*}
we see that
\begin{equation*}
\fg=\fn\oplus\fa\oplus \fm\oplus\fn^-.
\end{equation*}
Another, more general, type of Langlands decomposition will be needed when discussing the reduction theory of lattices in $G$; as this will only be needed in Section \ref{ReducSec}, we postpone the introduction of this until then.

A Langlands decomposition of $\fq$ gives rise to a corresponding Langlands decomposition of $P$: let $N$, $A$ and $M'$ be the analytic subgroups of $G$ with Lie algebras $\fn$, $\fa$, and $\fm$, respectively. Letting $M=M'Z_K(\fa)$, we have
\begin{equation*}
P=NAM,
\end{equation*}
and the map $N\times A\times M\rightarrow P$, $(n,a,m)\mapsto nam$ is a diffeomorphism onto. Similarly to the groups $N_0$, $A_0$, $N$ and $A$ are simply connected, $A$ is abelian, and $N$ is $\Ad$-unipotent. The exponentials of the positive Weyl chamber $\fa^+$ and its closure $\overline{\fa^+}$ are denoted $A^+$ and $\overline{A^+}$, respectively. We make two final definitions that will prove useful when integrating over $G$: let $N^-$ be the analytic subgroup with Lie algebra $\fn^-$, and 
\begin{equation*}
\rho_{\fa}= \frac{1}{2}\!\!\!\!\sum_{\;\;\;\;\lambda\in\Sigma^+(\fg,\fa)}\!\! \!\!\dim(\fg_{\lambda}) \lambda \in\fa^*.
\end{equation*}
Since $A$ is simply connected, we may also view $\rho_{\fa}$ as a function on $A$; by abusing notation slightly, we write $\rho_{\fa}(a)=\rho_{\fa}(\log_{\fa}a)$ for $a\in A$.

We conclude this discussion by recalling the explicit construction of standard parabolic subgroups (cf.\ \cite[Chapter VII.7]{Knapp2}). Assume now that $P=NAM$ is a standard parabolic subgroup. There then exists a subset $\scrF\subset \Sigma_0^+(\fg,\fa_0)$ such that
\begin{align*}
\fa&=\fa_{\scrF}=\lbrace H\in \fa_0\,:\, \alpha(H)=0\quad\forall \alpha\in\scrF\rbrace\\
\fn&=\fn_{\scrF}=\bigoplus_{\substack{\lambda\in\Sigma^+(\fg,\fa_0)\\\lambda|_{\fa_{\scrF}\neq 0}}} \fg_{\lambda}
\\ \fm&=\fm_{\scrF}=\lbrace J\in Z_{\fg}(\fa_{\scrF})\,:\, B(H,J)=0\quad\forall H\in\fa_{\scrF}\rbrace.
\end{align*} 
Moreover, the subset $\scrF$ uniquely determines $P$; there are therefore exactly $2^{\dim \fa_0}$ standard parabolic subgroups, and we may write $P=P_{\scrF}=N_{\scrF}A_{\scrF}M_{\scrF}$. Given subsets $\scrF_1\subset \scrF_2\subset \Sigma_{0}^+(\fg,\fa_0)$, we have $P_{\scrF_1}\subset P_{\scrF_2}$, $M_{\scrF_1}\subset M_{\scrF_2}$, $A_{\scrF_2}\subset A_{\scrF_1}$, $N_{\scrF_2}\subset N_{\scrF_1}$, and $\rho_{\fa_{\scrF_2}}=\rho_{\fa_{\scrF_1}}|_{\fa_{\scrF_2}}$. Finally, observe that $P_0=P_{\emptyset}$ and $G=P_{\Sigma_0^+(\fg,\fa_0)}$.
\subsection{Measures and Integration}\label{INTMESSEC} We will denote the Haar measure on $G$ by $d\mu$, or $dg$, and the corresponding measure on $\GaG$ by $\mu_{\GaG}$ or $dx$ (recall that the Haar measure on $G$ has been normalized so that $(\GaG,\mu_{\GaG})$ is a probability space). We will use similar notation for unimodular subgroups $H$ of $G$, i.e.\ $\mu_H$ or $dh$ will denote a Haar measure on $H$. For subgroups $H$ which are not unimodular, $d_lh$ and $d_rh$ will be used to denote left and right Haar measures, respectively. The Haar measure on $G$ can be decomposed with respect to various subgroups, cf.\ \cite[Chapter VIII.3]{Knapp2}; the most important decomposition for us is the following (cf.\ \cite[Proposition 8.45]{Knapp2}): let $P=NAM$ be a parabolic subgroup. Then $NAMN^-$ is an open dense subset of $G$, and the Haar measures on $N$, $A$, $M$, $N^-$ can be normalized so that $dg=e^{-2\rho_{\fa}(a)}\,d\overline{n}\,dm\,da\,dn$, i.e.\ for $f\in C_c(G)$,
\begin{equation}\label{measuredecomp}
\int_G f(g)\,dg= \int_N \int_A \int_M\int_{N^-}f(nam\overline{n})\,e^{-2\rho_{\fa}(a)}\,d\overline{n}\,dm\,da\,dn.
\end{equation}

\subsection{Representation Theory}\label{Reptheorysec}
As in \cite{Edwards}, the key ingredient in the proofs of our main results is the theory of unitary representations. Recall that a unitary representation of $G$ is a pair $(\pi,\scrH)$, where $\scrH$ is a separable Hilbert space, and $\pi$ is a homomorphism of topological groups from $G$ to the group of unitary operators on $\scrH$ (equipped with the strong operator topology). We say that a unitary representation $(\pi,\scrH)$ is irreducible if $\scrH$ has no non-trivial closed subspaces that are invariant under $\pi(G)$. The irreducible unitary representations may be viewed as the ``building blocks'' for all other unitary representations in the following way: given a unitary representation $(\pi,\scrH)$ of $G$, there exist a locally compact Hausdorff space $\mathsf{Z}$ and a positive Radon measure $\upsilon$ on $\mathsf{Z}$ such that 
\begin{equation}\label{INTEGRALDECOMP1}
(\pi,\mathcal{H})\cong \left(\int_{\mathsf{Z}}^{\oplus} \mathcal{\pi}_{\zeta}\,d\upsilon(\zeta), \int_{\mathsf{Z}}^{\oplus} \mathcal{H}_{\zeta}\,d\upsilon(\zeta) \right),
\end{equation}
where $(\pi_{\zeta}, \mathcal{H}_{\zeta})$ is an irreducible unitary representation of $G$ for $\upsilon$-a.e.\ $\zeta$ (cf.,\ e.g.., \cite[Corollary 14.9.5]{Wallach}). This is called the direct integral decomposition of $(\pi,\scrH)$. In Section \ref{BURGERSEC} we will prove certain identities for operators in irreducible unitary representations of $G$. The direct integral decomposition of $(\rho,V)$ will then be used to prove that similar identities hold in $(\rho,V)$.

The complexification of $\fg$ is denoted $\fg_{\CC}$; similar notation is used for subalgebras of $\fg$. We let $\Ug$ denote the universal enveloping algebra of $\fg_{\CC}$, and the terms in the canonical filtration of $\Ug$ are denoted $\Ugm$, where $m\in\NN$. Recall that a smooth vector of $(\pi,\scrH)$ is a vector $\vecv\in\scrH$ such that the map $G\rightarrow\scrH$, $g\mapsto \pi(g)\vecv$ is a $C^{\infty}$-function. The subspace of smooth vectors is denoted $\scrH^{\infty}$; by a well-known result of G\r{a}rding, $\scrH^{\infty}$ is dense in $\scrH$. We then have a (Lie algebra) representation $d\pi$ of $\fg$ on $\scrH^{\infty}$, called the derived representation. For $X\in \mathfrak{g}$,   $d\pi(X)$ is defined through
\begin{equation}\label{diff1}
d\pi(X)\vecv:=\left.\frac{d}{dt}\right|_{t=0}\!\!\!\! \pi\big(\exp(tX)\big)\vecv,\qquad \vecv\in\scrH^{\infty}.
\end{equation} 
This extends (by linearity and composition) to a representation $d\pi$ of $\Ug$ in the canonical way. Of particular importance to us is the action of the center of $\Ug$, which we denote by $\Zg$. By Schur's lemma, if $(\pi,\scrH)$ is irreducible, then for each $Z\in\Zg$ there exists a scalar $\lambda_Z\in \CC$ such that $d\pi(Z)\vecv=\lambda_Z\vecv$ for all $v\in\scrH^{\infty}$.

While it is common to consider the $K$-finite vectors of a representation $(\pi,\scrH)$ and use the dimensions of the spaces of $K$-translates of such vectors to help quantify certain aspects of the action of $G$ on $\scrH$ (cf.,\ e.g.,\ \cite[Chapters VII \& VIII]{Knapp1} as well as Definitions \ref{RATEDEF} and \ref{UNIRATEDEF}), for our purposes it is more convenient to use Sobolev norms for $\pi$ on $\scrH^{\infty}$. We choose a basis $\lbrace X_j\rbrace_{j=1,\ldots, \dim\fg}$ of $\fg$, and define the Sobolev norm of order $m$, $\|\cdot\|_{\scrS^m(\scrH)}$, on $\scrH^{\infty}$ by
\begin{equation}\label{sobnorm}
\|\vecv\|_{\scrS^m(\mathcal{H})}^2:=\sum_{U} \|d\pi(U)\vecv\|^2\qquad\vecv\in\scrH^{\infty},
\end{equation}
the sum running over all monomials $U$ in the elements $X_j$ of degree not greater than $m$. Note that this sum includes the term ``1'' of order zero. While different bases of $\fg$ give rise to different Sobolev norms, these norms will be equivalent. The closure of $\scrH^{\infty}$ with respect to the norm $\|\cdot\|_{\scrS^m(\scrH)}$ is denoted $\scrS^m(\scrH)$. For notational convenience, in the case $(\pi,\scrH)=(\rho,L^2(\GaG))$ we denote $\scrS^m(L^2(\GaG))$ by $\scrS^m(\GaG)$. Note that for our subrepresentation $(\rho,V)$ we have $\|f\|_{\scrS^m(V)}=\|f\|_{\scrS^m(\GaG)}$ for all $f\in V^{\infty}$, and hence $\scrS^m(V)=V\cap \scrS^m(\GaG)$.

For any $m\geq 0$, there exists a continuous function $C: G\rightarrow \RR_{>0}$ (depending only on $G$, $m$ and the choice of basis $\lbrace X_j\rbrace_{j=1,\ldots, \dim\fg}$) such that for any unitary representation $(\pi,\scrH)$,
 \begin{equation}\label{transgbd}
 \|\pi(g)\vecv\|_{\scrS^m(\scrH)}\leq C(g)\|\vecv\|_{\scrS^m(\scrH)}\qquad\forall g\in G,\,\vecv\in \scrH^{\infty}.
 \end{equation}
  
The direct integral decomposition of a unitary representation $(\pi,\mathcal{H})$ into irreducible representations given in \eqref{INTEGRALDECOMP1} may be used to decompose norms on $\scrH$ and $\scrH^{\infty}$: for $\vecv\in\mathcal{H}$,
 \begin{equation*}
 \|\vecv\|_{\mathcal{H}}^2=\int_{\mathsf{Z}} \|\vecv_{\zeta}\|_{\mathcal{H}_{\zeta}}^2\,d\upsilon (\zeta),
 \end{equation*}
and
\begin{equation*}
\pi(g)\vecv=\int_{\mathsf{Z}} \pi_{\zeta}(g)\vecv_{\zeta}\,d\upsilon (\zeta).
\end{equation*}
Moreover, 
\begin{equation}\label{SOBNORMDECOMP}
 \|\vecv\|_{\scrS^m(\mathcal{H})}^2=\int_{\mathsf{Z}} \|\vecv_{\zeta}\|_{\scrS^m(\mathcal{H}_{\zeta})}^2\,d\upsilon (\zeta)\qquad\forall \vecv\in\scrH^{\infty},\:m\in\NN.
\end{equation}
Direct integral decompositions allow us to give explicit constructions of \emph{intertwining operators}; recall that for a representation $(\pi,\scrH)$, these are the operators that commute with $\pi(g)$ for all $g\in G$. Assume that 
a representation $(\pi,\scrH)$ has a direct integral decomposition as in \eqref{INTEGRALDECOMP1}, and let $f:\mathsf{Z}\rightarrow\CC$ be a bounded, measurable function. We then define an operator $T_f$ on $\scrH$ through
\begin{equation}\label{inter1}
T_f\vecv:=\int_{\mathsf{Z}}f(\zeta)\vecv_{\zeta}\,d\upsilon (\zeta)\qquad\forall \vecv\in\scrH.
\end{equation}
This then gives
\begin{equation*}
T_f\pi(g)\vecv=\int_{\mathsf{Z}}f(\zeta)\pi_{\zeta}(g)\vecv_{\zeta}\,d\upsilon (\zeta)=\int_{\mathsf{Z}}\pi_{\zeta}(g)f(\zeta)\vecv_{\zeta}\,d\upsilon (\zeta)=\pi(g)T_f\vecv,
\end{equation*}
as well as $\|T_f\vecv\|\leq \left( \sup_{\zeta\in\mathsf{Z}} |f(\zeta)|\right)\|\vecv\|$ for all $\vecv\in\scrH$. We will have use of intertwining operators $T_f$ for measurable functions $f$ which are unbounded on $\mathsf{Z}$. Operators of this type will, in general, be unbounded on $\scrH$. The operators of this kind that will be of use to us will, however, be bounded operators on $\scrH^{\infty}$ with respect to suitably chosen Sobolev norms.
\subsection{Decay of Matrix Coefficients} We review some material regarding the quantitative decay of matrix coefficients of unitary representations. 
Two results regarding rates of decay of matrix coefficients will be needed; let $(\pi,\scrH)$ be an arbitrary unitary representation of $G$. We then have the following:

\begin{lem}\label{DECOMPRATE} Suppose that $(\pi,\scrH)\cong\left(\int_{\mathsf{Z}}^{\oplus} \mathcal{\pi}_{\zeta}\,d\upsilon(\zeta), \int_{\mathsf{Z}}^{\oplus} \mathcal{H}_{\zeta}\,d\upsilon(\zeta) \right)$, where each $(\pi_{\zeta},\scrH_{\zeta})$ is an irreducible unitary representation of $G$.
\begin{enumerate}[i)]
\item If $\eta$ is a rate of decay for the matrix coefficients of $g_{\RR}$ in $(\pi,\scrH)$, then $\eta$ is a rate of decay for the matrix coefficients of $g_{\RR}$ in $(\pi_{\zeta},\scrH_{\zeta})$ for $\upsilon$-a.e. $\zeta$.
\item Let $P=NAM$ be a parabolic subgroup of $G$. If $\|\cdot\|$ controls the rate of decay for the matrix coefficients of $\overline{A^+}$ in $(\pi,\scrH)$, then $\|\cdot\|$  controls the rate of decay for the matrix coefficients of $\overline{A^+}$ in $(\pi_{\zeta},\scrH_{\zeta})$ for $\upsilon$-a.e. $\zeta$.
\end{enumerate}
\end{lem}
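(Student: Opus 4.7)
The plan for both parts is essentially identical; I describe it in detail for part (i), noting at the end the trivial modifications needed for part (ii).

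First, I would reduce the target inequality \eqref{DECAYDEF} for $\pi_\zeta$ to the cleaner statement in which $\vecu$ and $\vecv$ each consist of a single copy of a $K$-type. Every $K$-finite vector in $\scrH_\zeta$ decomposes, via Peter--Weyl, as a finite sum $\sum_\sigma \sum_i v_{\sigma,i}\otimes w_{\sigma,i}$ with $v_{\sigma,i}\in V_\sigma$ coefficients and $\{w_{\sigma,i}\}$ orthonormal in the multiplicity space $M_{\zeta,\sigma}$; two applications of Cauchy--Schwarz (first within a single isotypic component, then across isotypic components) show that the general bound in \eqref{DECAYDEF} follows from the special case $\vecu = v\otimes w$, $\vecv = v'\otimes x$ with $v\in V_\sigma$, $v'\in V_\tau$ unit vectors (for which $\dim(\pi_\zeta(K)\vecu)=\dim\sigma$ and $\dim(\pi_\zeta(K)\vecv)=\dim\tau$).

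Next, I would exploit the $G$-equivariance of the decomposition \eqref{INTEGRALDECOMP1}. Each $K$-isotypic projection $P_\sigma$ commutes with $\pi(K)$, hence is decomposable: $P_\sigma = \int_{\mathsf{Z}}^{\oplus}P_{\sigma,\zeta}\,d\upsilon(\zeta)$. Consequently the $\sigma$-isotypic subspace of $\scrH$ decomposes fiber-wise, and the Peter--Weyl presentation globalizes to $\scrH_\sigma \cong V_\sigma\otimes M_\sigma$ with $M_\sigma = \int_{\mathsf{Z}}^{\oplus} M_{\zeta,\sigma}\,d\upsilon(\zeta)$.

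The key step is the following. Fix $K$-types $\sigma,\tau$, unit vectors $v\in V_\sigma$, $v'\in V_\tau$, and $t\in\RR$, and consider the sesquilinear form
\[
B_t(W,X) = \langle \pi(g_t)(v\otimes W),\, v'\otimes X\rangle,\qquad W\in M_\sigma,\ X\in M_\tau.
\]
By the reduction above, the hypothesis on $(\pi,\scrH)$ gives $\|B_t\|\leq C(\dim\sigma\,\dim\tau)^{1/2}(1+|t|^q)e^{-\eta|t|}$. Crucially, $B_t$ is itself decomposable: $B_t = \int_{\mathsf{Z}}^{\oplus} B_{t,\zeta}\,d\upsilon(\zeta)$ with $B_{t,\zeta}(w,x) = \langle \pi_\zeta(g_t)(v\otimes w),\, v'\otimes x\rangle$. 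Standard direct-integral theory then identifies $\|B_t\|$ with $\operatorname{ess\,sup}_\zeta\|B_{t,\zeta}\|$, yielding the same bound on $\|B_{t,\zeta}\|$ for $\upsilon$-a.e.\ $\zeta$. To make this valid for all parameters simultaneously I would finally run the previous step over a countable dense set of triples $(t,v,v')$ and over the countable collection of $K$-type pairs $(\sigma,\tau)$, take the union of the resulting null sets, and extend the bound from this dense set to every $(t,v,v')$ by smoothness in $t$ and bilinearity in $(v,v')$. Part (ii) is identical, with $g_t$ replaced by $\exp(H)$, $t\in\RR$ by $H\in\overline{\fa^+}$, and $|t|$ by $\|H\|$.

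The main obstacle I anticipate is purely technical: carefully verifying the identification $\|B_t\|=\operatorname{ess\,sup}_\zeta\|B_{t,\zeta}\|$ within the decomposable-operator framework (including the measurability of the field $\zeta\mapsto B_{t,\zeta}$ obtained from the $K$-isotypic sub-direct-integral), and patching the countably many exceptional null sets into a single $\upsilon$-null set outside of which \eqref{DECAYDEF} holds uniformly in all parameters. Once this bookkeeping is set up, the rest is mechanical.
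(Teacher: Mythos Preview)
Your approach is correct but genuinely different from the paper's. The paper proceeds via two black boxes: first, Fell's theorem gives that $\pi_\zeta$ is weakly contained in $\pi$ for $\upsilon$-a.e.\ $\zeta$; second, the hypothesis is repackaged as ``$(\Phi,\Psi)$-boundedness'' in the sense of Howe, and Howe's lemma (that weak containment preserves $(\Phi,\Psi)$-boundedness, up to replacing $\Psi$ by $\sqrt{2}\,\Psi$) immediately yields the conclusion. No direct-integral computation is performed at all.

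Your route is more hands-on: you stay inside the given direct integral, reduce to single $K$-types via Cauchy--Schwarz (this reduction does work with the same constant---use the SVD presentation $\vecu_\sigma=\sum_i s_i\,e_i\otimes w_i$ so that $\dim(\pi(K)\vecu_\sigma)=r\dim\sigma$ with $r$ the rank, and then $\sum s_i\le r^{1/2}\|\vecu_\sigma\|$), and invoke the essential-supremum formula for the norm of a decomposable operator. The trade-off is clear: the paper's argument is shorter and conceptual but imports nontrivial external results; yours is self-contained modulo standard direct-integral theory, at the cost of the bookkeeping you already flagged (measurable Peter--Weyl identifications $(\scrH_\zeta)_\sigma\cong V_\sigma\otimes M_{\zeta,\sigma}$ so that the relevant projections are decomposable, and patching countably many null sets over $(\sigma,\tau,t,v,v')$). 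One small simplification for you: rather than decomposing $B_t$ between two different multiplicity spaces, work with the single decomposable operator $P_{v'}\,\pi(g_t)\,P_v$ on $\scrH$ itself, where $P_v$ is the orthogonal projection onto $v\otimes M_\sigma$; this keeps you within the standard single-space framework for the ess-sup identity.
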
 
\begin{proof}
By e.g.\ \cite[Theorem 1.7]{Fell}, for $\upsilon$-a.e. $\zeta$, $(\pi_{\zeta},\scrH_{\zeta})$ is weakly contained in $(\pi,\scrH)$. The unitary dual of $K$ is denoted by $\widehat{K}$;  we then have the orthogonal decomposition $\scrH=\bigoplus_{\tau\in\widehat{K}}\scrH_{\tau}$, where $(\pi|_K,\scrH_{\tau})$ is isomorphic to a number (possibly countably infinite) of copies of $\tau$. Let $\Psi$ denote the function on $\widehat{K}$ defined by $\Psi(\tau)=\dim\tau$. By \cite[p. 206 (1)]{Knapp1}, $\dim(\pi(K)\vecv)\leq \Psi(\tau)^2$ for all $\vecv\in\scrH_{\tau}$. In order to prove \textit{i)}, we let $C$ and $q$ be as in \eqref{DECAYDEF}, and define
\begin{equation*}
\Phi(g)=\begin{cases} C(1+|t|^q)e^{-\eta|t|}\qquad &\mathrm{if}\;g\in Kg_{t}K\\1\qquad&\mathrm{otherwise}.\end{cases}
\end{equation*}
In the case of \textit{ii)}, without loss of generality  we assume that $\fa=\fa_{\scrF}$, for some $\scrF\subset\Sigma_0^+(\fg,\fa_0)$, and hence $\fa\subset \fa_0$. We now let $C$ and $q$ be as in \eqref{UNIFORMMATRIXCOEFFBD}, and define
\begin{equation*}
\Phi(g)= C\min_{\substack{w\in N_K(\fa_0)/Z_K(\fa_0)\\\Ad_wH\in\overline{\fa^+}}}(1+\|\Ad_w H\|^q)e^{-\|\Ad_w H\|}
\end{equation*}
if $g\in K\exp(\pm H)K$ for some $H\in\overline{\fa^+}$, and $\Phi(g)=1$ otherwise. In either case, by \cite[Theorem 7.39]{Knapp2} and the fact that $N_K(\fa_0)/Z_K(\fa_0)$ is a finite group, $\Phi$ is well-defined.  Furthermore, $\Phi$ is bi $K$-invariant and $\Phi(g)=\Phi(g^{-1})$ for all $g\in G$. Using the language of \cite[Chapter 6]{Howe}, \eqref{DECAYDEF} or \eqref{UNIFORMMATRIXCOEFFBD} thus implies that $(\pi,\scrH)$ is \emph{$(\Phi,\Psi)$-bounded} (see \cite[(6.8)]{Howe}). By \cite[Lemma 6.2(b)]{Howe} and \cite[p. 287 j)]{Howe} and we then have that for $\upsilon$-a.e. $\zeta$, $(\pi_{\zeta},\scrH_{\zeta})$ is $(\Phi,\sqrt{2}\Psi)$-bounded. Since in case \textit{i)} $\Phi(g_t)\leq C(1+|t|^q)e^{-\eta|t|}$, and in case \textit{ii)} $\Phi(\exp(H))\leq C(1+\| H\|^q)e^{-\| H\|}$ for all $H\in\overline{\fa^+}$, we are done.
\end{proof}

\begin{lem}\label{SOBNORMS}
\begin{enumerate}[i)]
\item If $\eta$ is a rate of decay for the matrix coefficients of $g_{\RR}$ in $(\pi,\scrH)$, then there exist $m\in \NN$, and $C,q\geq 0$ such that for all  $t\in\RR$, $\vecu,\vecv\in\scrH^{\infty}$,
\begin{equation}\label{SOBDECAY}
|\langle \pi(g_t)\vecu,\vecv\rangle|\leq C \|\vecu\|_{\scrS^m(\scrH)}\|\vecv\|_{\scrS^m(\scrH)} (1+|t|^q)e^{-\eta|t|}.
\end{equation}
\item Let $P=NAM$ be a parabolic subgroup of $G$. If $\|\cdot\|$ controls the decay of matrix coefficients of $\overline{A^+}$ in $(\pi,\scrH)$, then there exist $m\in \NN$, and $C,q\geq0$ such that for all  $H\in\overline{\fa^+}$ and $\vecu,\vecv\in\scrH^{\infty}$,
\begin{equation}\label{SOBNORMDECAY}
|\langle \pi(\exp(-H))\vecu,\vecv\rangle|\leq C \|\vecu\|_{\scrS^m(\scrH)}\|\vecv\|_{\scrS^m(\scrH)} (1+\|H\|^q)e^{-\|H\|}.
\end{equation}
\end{enumerate}
\end{lem}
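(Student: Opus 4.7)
The plan is to pass from the hypothesis on $K$-finite vectors to an estimate on arbitrary smooth vectors via the $K$-isotypic decomposition of $\scrH$. Fix an $\Ad_K$-invariant inner product on $\fk$, let $\lbrace Y_j\rbrace_{j=1}^{\dim\fk}$ be an orthonormal basis for it, and set $\Omega_K=1-\sum_j Y_j^2\in\Ug$. Since $\Omega_K$ is $K$-invariant, it acts on the $\tau$-isotypic component $\scrH_\tau$ (for $\tau\in\widehat{K}$) by a scalar $1+\lambda_\tau\geq 1$; further, $d\pi(Y_j)$ is skew-adjoint for $\pi$ unitary, so $\lambda_\tau\geq 0$. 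Every $\vecu\in\scrH^{\infty}$ admits the orthogonal decomposition $\vecu=\sum_\tau\vecu_\tau$ with $\vecu_\tau\in\scrH_\tau$, and each $\vecu_\tau$ is $K$-finite with $\dim(\pi(K)\vecu_\tau)\leq (\dim\tau)^2$, since $\scrH_\tau$ is a Hilbert tensor product of the model of $\tau$ with a multiplicity space.

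Applying the hypothesis \eqref{DECAYDEF} to each pair $(\vecu_\tau,\vecv_{\tau'})$, summing, and factoring yields
\begin{equation*}
|\langle\pi(g_t)\vecu,\vecv\rangle|\leq C(1+|t|^q)e^{-\eta|t|}\Bigl(\sum_{\tau\in\widehat{K}}\dim(\tau)\|\vecu_\tau\|\Bigr)\Bigl(\sum_{\tau'\in\widehat{K}}\dim(\tau')\|\vecv_{\tau'}\|\Bigr).
\end{equation*}
The whole lemma thus reduces to the estimate $\sum_\tau\dim(\tau)\|\vecu_\tau\|\ll\|\vecu\|_{\scrS^{m}(\scrH)}$ for some $m$ depending only on $G$. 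I would prove this by combining three classical facts about the unitary dual of the compact group $K$: (a) a consequence of Weyl's dimension formula gives $\dim(\tau)\leq C_K(1+\lambda_\tau)^{N_1}$ for constants depending only on $K$; (b) the Weyl-law asymptotics for the distribution of Casimir eigenvalues on $K$ ensure that $\sum_{\tau\in\widehat{K}}(1+\lambda_\tau)^{-N_2}$ converges once $N_2$ is sufficiently large in terms of $\dim K$; (c) the element $\Omega_K^m$ lies in $\mathcal{U}^{2m}(\fg_\CC)$, so $\|d\pi(\Omega_K)^m\vecu\|\ll\|\vecu\|_{\scrS^{2m}(\scrH)}$ with a constant depending only on the basis of $\fg$ fixed in \eqref{sobnorm}. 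Applying Cauchy--Schwarz,
\begin{equation*}
\sum_\tau\dim(\tau)\|\vecu_\tau\|\leq\Bigl(\sum_\tau\dim(\tau)^2(1+\lambda_\tau)^{-2m}\Bigr)^{1/2}\Bigl(\sum_\tau(1+\lambda_\tau)^{2m}\|\vecu_\tau\|^2\Bigr)^{1/2},
\end{equation*}
and for $m$ large enough the first factor is finite by (a) and (b), while the second equals $\|d\pi(\Omega_K)^m\vecu\|$ and is bounded by (c).

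Part \textit{ii)} is obtained by repeating the identical argument with $\exp(-H)$ in place of $g_t$, and the norm $\|H\|$ in place of $|t|$; nothing in the bilinear reduction or the Sobolev comparison uses the specific group element, only the shape of the hypothesis. The main obstacle is really just (a) and (b): these are standard for compact Lie groups, but I would need to extract the explicit growth exponent $N_1$ (a polynomial in the rank and number of positive roots of $K$) and the required $N_2$ (any exponent exceeding $\dim K$ suffices via the heat-kernel trace) so as to pin down an admissible value of $m$ depending only on $G$, uniformly in $(\pi,\scrH)$ and independently of any further structure of the representation.
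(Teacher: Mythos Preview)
Your proposal is correct and follows essentially the same route as the paper: decompose into $K$-isotypic components, apply the $K$-finite hypothesis termwise with $\dim(\pi(K)\vecu_\tau)\leq(\dim\tau)^2$, then use Cauchy--Schwarz together with the convergence of $\sum_\tau(\dim\tau)^2(1+\lambda_\tau)^{-2m}$ and the identity $(1+\lambda_\tau)^m\|\vecu_\tau\|=\|d\pi(\Omega_K)^m\vecu_\tau\|$. The only cosmetic difference is that the paper cites Warner \cite[4.4.2.3]{Warner} directly for the convergence, whereas you unpack it into the Weyl dimension bound and a Weyl-law-type estimate; either way the argument is the same.
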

\begin{proof}
This is fairly standard, c.f., e.g., the proofs of \cite[Theorem 3.1]{Katok} and \cite[Theorem 6]{Maucourant}: let $X_1,\ldots,X_d$ be an orthonormal basis for $\fk$ with respect to an invariant inner product, and define $\scrC_K=\sum_{i=1}^d X_i^2$, $\Omega=1-\scrC_K\in \scrZ(\fk_{\CC})$. As in the proof of Lemma \ref{DECOMPRATE}, we decompose $\scrH$ as $\scrH=\bigoplus_{\tau\in\widehat{K}}\scrH_{\tau}$. By Schur's lemma, $d\pi(\Omega)$ acts as a scalar $c(\tau)$ on each $\scrH_{\tau}^{\infty}=\scrH_{\tau}\cap \scrH^{\infty}$. The skew-symmetry of $d\pi(X)$ for all $X\in\fg$ gives $c(\tau)\in\RR_{\geq1}$. We decompose $\vecu$ and $\vecv$ as $\vecu=\sum_{\tau}\vecu_{\tau}$ and $\vecv=\sum_{\tau}\vecv_{\tau}$ with $\vecu_{\tau},\,\vecv_{\tau}\in\scrH_{\tau}^{\infty}$. Mimicking the proof of \cite[4.4.2.3]{Warner}, for $m\in\NN$ large enough, we have  
\begin{equation}\label{taubd}
\sum_{\tau\in\widehat{K}} c(\tau)^{-2m}(\dim\tau)^{2}<\infty,
\end{equation}
where $\dim\tau$ is the dimension of any vector space in a realization of $\tau$. For any $m\in\NN$, we have
\begin{equation*}
\|\vecu_{\tau}\|= c(\tau)^{-m}\|d\pi(\Omega^m)\vecu_{\tau}\|,\qquad \|\vecv_{\tau}\|= c(\tau)^{-m}\|d\pi(\Omega^m)\vecv_{\tau}\|.
\end{equation*}
We now let $g$ be either $g_t$ or $\exp(-H)$ for some $H\in\overline{\fa^+}$. Let $D=(1+|t|^q)e^{-\eta|t|}$ if $g=g_t$ and $D=(1+\|H\|^q)e^{-\|H\|}$ if $g=\exp(-H)$. Using either \eqref{DECAYDEF} or \eqref{UNIFORMMATRIXCOEFFBD}, we have
\begin{align*}
|\langle\pi(g)\vecu,\vecv\rangle|\!\!\leq\!\! \sum_{\tau\in\widehat{K}}\sum_{\sigma\in\widehat{K}}\!|\langle \pi(g)\vecu_{\tau},\vecv_{\sigma}\rangle|\!\leq CD\!\sum_{\tau\in\widehat{K}}\sum_{\sigma\in\widehat{K}} \|\vecu_{\tau}\|\|\vecv_{\sigma}\| \left(\dim(\pi(K)\vecu_{\tau})\dim(\pi(K)\vecv_{\sigma})\right)^{1/2}.
\end{align*}
As before, by \cite[p. 206 (1)]{Knapp1}, $\dim(\pi(K)\vecu_{\tau})\leq (\dim\tau)^2$ and $\dim(\pi(K)\vecv_{\sigma})\leq (\dim\sigma)^2$. Using this, together with the Cauchy-Schwarz inequality and \eqref{taubd}, gives
\begin{align*}
\leq &CD\sum_{\tau\in\widehat{K}}\sum_{\sigma\in\widehat{K}} \|\vecu_{\tau}\|\|\vecv_{\sigma}\| (\dim\tau)(\dim\sigma)\\&=CD\left(\sum_{\tau\in\widehat{K}} c(\tau)^{-m}\|d\pi(\Omega^m)\vecu_{\tau}\|(\dim\tau)\right)\left(\sum_{\sigma\in\widehat{K}} c(\sigma)^{-m}\|d\pi(\Omega^m)\vecv_{\sigma}\|(\dim\sigma) \right)\\&\leq CD \left( \sum_{\tau\in\widehat{K}} \|d\pi(\Omega^m)\vecu_{\tau}\|^2 \right)^{1/2}\left( \sum_{\sigma\in\widehat{K}} \|d\pi(\Omega^m)\vecv_{\sigma}\|^2 \right)^{1/2}\left(\sum_{\varrho\in\widehat{K}} c(\varrho)^{-2m}\dim(\varrho)^2  \right) \\&= CD\|d\pi(\Omega^m)\vecu\|\|d\pi(\Omega^m)\vecv\|\left(\sum_{\tau\in\widehat{K}} c(\tau)^{-2m}(\dim\tau)^{2}\right)\\&\quad\ll D \|\vecu\|_{\scrS^{2m}(\scrH)}\|\vecv\|_{\scrS^{2m}(\scrH)},
\end{align*}
as desired.
\end{proof}

\section{Integral Formulas}\label{BURGERSEC}
The aim of this section is to establish formulae similar to  \cite[Lemma 1]{Burger90} and \cite[Proposition 4]{Edwards} for our case of a general group $G$. 
\subsection{Harish-Chandra Isomorphisms} As a starting point, we have the following lemma:
\begin{lem}\label{LIEIDENT}
Let $P=NAM$ be a parabolic subgroup of $G$, with corresponding Lie algebra $\fn\oplus\fa\oplus\fm$. Given $H\in\fa$, we can find elements $Z_0,\, Z_1, \ldots, Z_{W-1} $ of $\Zg$ such that
\begin{equation*}
H^W+H^{W-1}Z_{W-1}+\ldots+ H Z_1+Z_0\in\fn\,\Ug.
\end{equation*}
\end{lem}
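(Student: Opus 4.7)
The strategy is to derive the identity from a Harish-Chandra-type projection of $\Zg$ into the Levi subalgebra $\fl=\fa\oplus\fm$ of $\fp$. Since $\fa$ is abelian and $\fm\subseteq Z_{\fg}(\fa)$, we have $\fa\subseteq\mathcal{Z}(\fl_{\CC})$, so $H$ itself is a central element of $\mathcal{U}(\fl_{\CC})$. The plan is to show that $H$ is integral over the image of $\Zg$ under this projection, and then lift the integral relation back to $\Ug$ modulo $\fn\,\Ug$.

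The projection is built via PBW. The ordered decomposition $\fg=\fn\oplus\fa\oplus\fm\oplus\fn^{-}$ yields a vector-space direct sum $\Ug=\mathcal{U}(\fl_{\CC}\oplus\fn^{-}_{\CC})\oplus\fn\,\Ug$; let $p$ denote the corresponding projection onto the first summand. An $\ad\fa$-weight argument shows that $p$ sends $\Zg$ into $\mathcal{U}(\fl_{\CC})$: every $z\in\Zg$ has pure $\fa$-weight zero (since $[\fa,z]=0$), while the weight-zero part of $\mathcal{U}(\fl_{\CC}\oplus\fn^{-}_{\CC})$ coincides with $\mathcal{U}(\fl_{\CC})$, because $\fn^{-}_{\CC}$ has strictly negative $\fa$-weights and $\fl_{\CC}$ has weight $0$. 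A short bracket computation, using that $\fl$ normalises $\fn$ and that $\mathcal{U}(\fl_{\CC})\cap\fn\,\Ug=\{0\}$, then gives $p(z)\in\mathcal{Z}(\fl_{\CC})$ and $z-p(z)\in\fn\,\Ug$ for every $z\in\Zg$.

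Next I would invoke the Harish-Chandra theorem for parabolic subalgebras (Harish-Chandra in the minimal case, Lepowsky in general; cf.\ Knapp's textbooks): the map $p:\Zg\to\mathcal{Z}(\fl_{\CC})$ makes $\mathcal{Z}(\fl_{\CC})$ into a finitely generated module, so $H\in\fa\subseteq\mathcal{Z}(\fl_{\CC})$ is integral over $p(\Zg)$. This yields a monic relation
\begin{equation*}
H^{W}+\sum_{k=0}^{W-1} p(Z_{k})\,H^{k}=0\qquad\text{in }\mathcal{U}(\fl_{\CC}),
\end{equation*}
for some $Z_{0},\ldots,Z_{W-1}\in\Zg$; applying Chevalley--Shephard--Todd to the little Weyl group $N_{K}(\fa)/Z_{K}(\fa)$ shows that $W$ may be taken no larger than the order of the Weyl group of $G$, consistent with the bound noted after Theorem~\ref{maintheorem2}. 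Centrality of each $Z_{k}$ gives
\begin{equation*}
H^{W}+\sum_{k=0}^{W-1} H^{k} Z_{k}=\sum_{k=0}^{W-1}\big(Z_{k}-p(Z_{k})\big)\,H^{k};
\end{equation*}
each $Z_{k}-p(Z_{k})\in\fn\,\Ug$ by construction, and $(\fn\,\Ug)H^{k}\subseteq\fn\,\Ug$ trivially, so the right-hand side lies in $\fn\,\Ug$, as required. The main obstacle is the invocation of the Harish-Chandra/Lepowsky theorem, which provides the nontrivial finite generation underlying the integrality; everything else is bookkeeping with the PBW decomposition and the $\ad\fa$-weight grading.
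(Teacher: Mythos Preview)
Your proposal is correct and follows essentially the same route as the paper, but packaged differently. Both arguments hinge on the relative Harish-Chandra projection $p:\Zg\to\scrZ(\fl_{\CC})$ (the paper calls it $\Proj_{(\fa\oplus\fm)_{\CC}}$) and the fact that $z-p(z)\in\fn\,\Ug$ for $z\in\Zg$. Where you diverge is in how integrality is obtained: you invoke the Lepowsky/Knapp finite-generation theorem abstractly, whereas the paper passes further to the full Harish-Chandra isomorphism $\gamma:\Zg\xrightarrow{\sim}\Uh^{\scrW}$, writes down the explicit polynomial $p_H(x)=\prod_{w\in\scrW/\Stab_{\scrW}(H)}\bigl(x-(wH+\delta_{\fh_{\CC}}(H))\bigr)$, and sets $Z_i=\gamma^{-1}(J_i)$ for its coefficients $J_i\in\Uh^{\scrW}$. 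The paper then verifies $P(H)\in\fn\,\Ug$ by the two-step factorisation $\Proj_{\fh_{\CC}}=\Proj_{(\fa\oplus\ft)_{\CC}}\circ\Proj_{(\fa\oplus\fm)_{\CC}}$ and injectivity of the second factor. Your argument is cleaner for the bare existence statement; the paper's explicit construction pays off later, since it gives the precise value $W=|\scrW/\Stab_{\scrW}(H)|$ and makes the continuous dependence of $Z_i(H),U_j(H)$ on $H$ visible---both needed for Lemma~\ref{LIEIDENT2} and the uniform estimates in Section~\ref{UNIFORMSEC}. One small imprecision: your bound on $W$ should come from $|\scrW(\fg_{\CC},\fh_{\CC})|/|\scrW(\fl_{\CC},\fh_{\CC})|$ rather than the restricted Weyl group $N_K(\fa)/Z_K(\fa)$.
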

\begin{remark}
Although the number $W$ depends on the choice of $P$ and $H$, it is uniformly bounded over all such choices, as seen in the proof.
\end{remark}
\begin{proof} Much of the proof follows by mimicking the proof of \cite[Proposition 8.22]{Knapp1}. For the sake of completeness, we write out some of the details: without loss of generality, we may assume that $P=NAM=N_{\scrF}A_{\scrF}M_{\scrF}\supset P_0$ for some subset $\scrF\subset\Sigma_0^+(\fg,\fa_0)$. By the construction of standard parabolic subgroups, we then have $\fa_0\oplus\fm_0\subset \fa\oplus\fm $. We choose a maximal abelian subalgebra $\ft_0$ of $\fm_0$; by \cite[Proposition 6.47]{Knapp2} $\fh_{\CC}=(\fa_0\oplus\ft_0)_{\CC}$ is a Cartan subalgebra of $\fg_{\CC}$. Let $\ft=\ft_0\oplus\fa^{\perp}$, where $\fa^{\perp}$ is the orthogonal complement of $\fa$ in $\fa_0$ with respect to the Killing form. Note that $\ft$ is a maximal abelian subalgebra of $\fm$, as seen from the construction of $\fm$ in \cite[(7.77a)]{Knapp2}. Since $\fh_{\CC}$ is a Cartan subalgebra of $\fg_{\CC}$, we have a root space decomposition $\fg_{\CC}=\fh_{\CC}\oplus\bigoplus_{\lambda\in\Delta(\fg_{\CC},\fh_{\CC})} \fg_{\lambda}$. We order the roots so that $\Sigma^+(\fg,\fa)$ consists of the elements of $\Delta^+(\fg_{\CC},\fh_{\CC})$ that are non-zero when restricted to $\fa$. This ensures that $\fn_{\CC}\subset \fg_{\CC}^+$, where $\fg^+_{\CC}=\bigoplus_{\lambda\in\Delta^+(\fg_{\CC},\fh_{\CC})} \fg_{\lambda}$. The subalgebra $\fg_{\CC}^-$ is defined in a similar fashion, and we get a triangular decomposition of $\fg_{\CC}$: $\fg_{\CC}=\fg_{\CC}^+\oplus \fh_{\CC} \oplus \fg_{\CC}^-$. Defining the action of the Weyl group $\scrW=\scrW(\fg_{\CC},\fh_{\CC})$ on $\fh_{\CC}$ by $\lambda(wX)=(w^{-1}\lambda)(X)$ for all $\lambda\in\fh_{\CC}^*$, $X\in\fh_{\CC}$, we consider the polynomial (with coefficients in $\Uh$) given by the product
\begin{equation}\label{HPOLY}
p_H(x)=\prod_{w\in\scrW/\mathrm{Stab}_{\scrW}(H)} \big(x-(wH+\delta_{\fh_{\CC}}(H)1)\big)=x^W+J_{W-1}x^{W-1}+\ldots+ J_1 x + J_0,
\end{equation} 
where $\delta_{\fh_{\CC}}(H)=\frac{1}{2}\sum_{\lambda\in\Delta^+(\fg_{\CC},\fh_{\CC})}\lambda(H)=\rho_{\fa}(H)$. We denote the set of $\scrW$-invariant elements of $\Uh$ by $\Uh^{\scrW}$; since $\Uh\cong\mathrm{Sym(\fh_{\CC})}$, we may view $\Uh$ as the space of polynomial functions on $\fh_{\CC}^*$, and the action of $\scrW$ is then defined by $(wJ)(\lambda)=J(w^{-1}\lambda)$, where $J\in\Uh$, $w\in\scrW$, and $\lambda\in\fh_{\CC}^*$. From their definition, we see that the coefficients $J_{W-1},\ldots, J_0$ of $p(x)$ are in $\Uh^{\scrW}$, and $p_H\big(H+\delta_{\fh_{\CC}}(H)\big)=0$. Let $\sigma$ denote the ``$\delta_{\fh_{\CC}}$-shift'' on $\Uh$, i.e.\ the unique $\CC$-algebra automorphism given by by $\sigma(X)=X+\delta_{\fh_{\CC}}(X)1$ for all $X\in\fh_{\CC}$, and then extending to all of $\Uh$ in the canonical way. The triangular decomposition of $\fg_{\CC}$ and the Poincar\'e-Birkhoff-Witt theorem give $\Ug=\Uh\oplus (\fg^+_{\CC}\Ug+\Ug\fg^-_{\CC})$. Let $\Proj_{\fh_{\CC}}:\Ug\rightarrow\Uh$ be the projection onto the first summand in this decomposition, and $\gamma=\sigma\circ\Proj_{\fh_{\CC}}$. Then $\gamma$ is the \emph{Harish-Chandra isomorphism}, an isomorphism from $\Zg$ to $\Uh^{\scrW}$. (Indeed, this is seen by following \cite[Chapter V.5]{Knapp2}, but using $\Delta^-$ as the set of positive roots; then ``$\gamma_n'$, ``$\delta$'', and ``$\gamma$'' in \cite[Chapter V.5]{Knapp2} correspond to $\mathrm{Proj}_{\fh_{\CC}}$, $-\delta_{\fh_{\CC}}$, and  $\gamma|_{\Zg}$ in our set-up, and the desired statement is given by \cite[Theorem 5.44]{Knapp2}.) We now let $Z_j=\gamma|_{\Zg}^{-1}(J_j)$, $j=0,1,\ldots, W-1$. Recall that $p_H(\sigma(H))=0$, i.e.\
\begin{equation*}\sigma(H)^W+\gamma(Z_{W-1})\sigma(H)^{W-1}+\ldots+\gamma(Z_{1})\sigma(H)+\gamma(Z_0)=0.\end{equation*} Applying $\sigma^{-1}$ to this identity gives
\begin{align*}
H^W+ \Proj_{\fh_{\CC}}(Z_{W-1})H^{W-1}+\ldots+ \Proj_{\fh_{\CC}}(Z_1)H+\Proj_{\fh_{\CC}}(Z_0)=0.
\end{align*}
Since $Z_j-\Proj_{\fh_{\CC}}(Z_j) \in \fg_{\CC}^+\Ug$, we now have
\begin{equation*}
P(H):=H^W+H^{W-1}Z_{W-1}+\ldots+ H Z_1+Z_0\in\fg_{\CC}^+\Ug.
\end{equation*}
It remains to prove that this polynomial is in fact in $\fn\,\Ug$ (which is a subspace of $\fg_{\CC}^+\Ug$). Note that $\Proj_{\fh_{\CC}}(P(H))=0$. We decompose $\fg$ as $\fg=\fn\oplus\fa\oplus\fm\oplus\fn^-$, and, as noted in \cite[Proof of Proposition 8.22]{Knapp1}, the same type of argument as in \cite[Proof of Proposition 5.34 (b)]{Knapp2} gives
\begin{equation*}
\Zg\subset\fn_{\CC}\Ug\oplus \scrU\big((\fa\oplus\fm)_{\CC}\big),
\end{equation*}
hence
\begin{equation}\label{CENTERCONTAIN}
\Zg\scrU(\fa_{\CC})\subset\fn_{\CC}\Ug\oplus \scrU\big((\fa\oplus\fm)_{\CC}\big).
\end{equation}
Note that $\fa\oplus\fm$ is a reductive Lie algebra with Cartan subalgebra $\fa\oplus\ft$. We thus have a triangular decomposition $(\fa\oplus\fm)_{\CC}= \fm_{\CC}^-\oplus(\fa\oplus\ft)_{\CC}\oplus \fm_{\CC}^+$, where the order on $\Delta(\fm_{\CC},\ft_{\CC})$ that defines $\fm_{\CC}^{\pm}$ is chosen so that $\Delta^+(\fm_{\CC},\ft_{\CC})=\Delta^+(\fg_{\CC},\fh_{\CC})|_{\ft_{\CC}}\setminus\lbrace 0\rbrace$. This choice of positivity gives $\fg_{\CC}^+=\fn_{\CC}\oplus \fm_{\CC}^+$. Letting $\Proj_{(\fa+\fm)_{\CC}}:\Zg\scrU(\fa_{\CC})\rightarrow\scrU\big((\fa\oplus\fm)_{\CC}\big)$ be the projection onto the second summand of \eqref{CENTERCONTAIN}, one checks that for all $Z\in\Zg$, $\Proj_{(\fa\oplus\fm)_{\CC}}(Z)\in\scrZ\big((\fa\oplus\fm)_{\CC}\big)$, hence $\Proj_{(\fa\oplus\fm)_{\CC}}\big(\Zg\scrU(\fa_{\CC})\big)\subset\scrZ\big((\fa\oplus\fm)_{\CC}\big)$. Since $\fa\oplus\ft$ is a Cartan subalgebra of the reductive Lie algebra, $\fa\oplus\fm$, by the same arguments as for $\fg_{\CC}$, $\scrZ\big((\fa\oplus\fm)_{\CC}\big)\subset \scrU\big((\fa\oplus\ft)_{\CC}\big)\oplus\fm_{\CC}^+\scrU\big((\fa\oplus\ft)_{\CC}\big)$. Letting $\Proj_{(\fa\oplus\ft)_{\CC}}$ denote the projection from $\scrZ\big((\fa\oplus\fm)_{\CC}\big)$ onto the first summand, we get
\begin{align*}
\Proj_{\fh_{\CC}}|_{\Zg\scrU(\fa_{\CC})}=\Proj_{(\fa\oplus\ft)_{\CC}}\circ \Proj_{(\fa\oplus\fm)_{\CC}}
\end{align*}
(cf.\ \cite[p. 225 (8.34)]{Knapp1}). Since composition with an appropriate half-sum of positive roots turns $\Proj_{(\fa\oplus\ft)_{\CC}}$ into a Harish-Chandra isomorphism for $\scrZ\big((\fa\oplus\fm)_{\CC}\big)$, $\Proj_{(\fa\oplus\ft)_{\CC}}$ is invertible. In particular, since  $\Proj_{(\fa\oplus\ft)_{\CC}}\big(\Proj_{(\fa\oplus\fm)_{\CC}}(P(H))\big)=\Proj_{\fh_{\CC}}(P(H))=0$, we conclude that $\Proj_{(\fa\oplus\fm)_{\CC}}(P(H))=0$, i.e.\ $P(H)\in\fn_{\CC}\,\Ug=\fn\,\Ug$.
\end{proof}

\subsection{Differential Equations}\label{DIFFEQSSEC} We shall now use Lemma \ref{LIEIDENT} to express the ``averaging operator'' in terms of differential operators. Let $(\pi,\scrH)$ be an irreducible unitary representation of $G$, and assume that $\vecv\in\scrH^{\infty}$. For $\chi\in C^{\infty}_c(U^+)$ define $\scrI_{\vecv}^{\chi}:G\rightarrow\scrH$ by
\begin{equation*}
\scrI_{\vecv}^{\chi}(g)=\int_{U^+} \chi(u)\pi(ug)\vecv\,du.
\end{equation*}
As we shall see later, the restriction that $\chi\in C_c^{\infty}(U^+)$ may be replaced by the condition that $\chi\in C_c^{m}(U^+)$ for some fixed number $m$, but for convenience it is assumed throughout the remainder of Section \ref{BURGERSEC} that $\chi$ is smooth. Note that $\scrI_{\vecv}^{\chi}$ is linear in both $\vecv$ and $\chi$, and for $X\in\fg$:
\begin{align}\label{XDIFFOPI}
(X\scrI_{\vecv}^{\chi})(g)=\left.\frac{d}{ds}\right|_{s=0}\scrI_{\vecv}^{\chi}(g\exp(sX)\big)&=\left.\frac{d}{ds}\right|_{s=0}\int_{U^+} \chi(u)\pi(ug\exp(sX))\vecv\,du\\\notag=&\int_{U^+} \chi(u)\pi(ug)\left.\frac{d}{ds}\right|_{s=0}\pi(\exp(sX))\vecv\,du=\scrI_{d\pi(X)\vecv}^{\chi}(g),
\end{align}
the exchange of integration and differentiation being permitted since $\chi$ has compact support. We use linearity and composition to extend this to all of $\Ug$, i.e.\
\begin{equation}\label{AVGDIFF}
\left(U\scrI_{\vecv}^{\chi}\right)(g)=\scrI_{d\pi(U)\vecv}^{\chi}(g)\qquad \forall U\in \Ug.
\end{equation} 
Similarly, $\fu^+$ acts on $C_c^{\infty}(U^+)$ by 
\begin{equation*}
(X\psi)(u):=\left.\frac{d}{ds}\right|_{s=0} \psi\left(u\exp(sX)\right)\qquad\forall X\in\fu^+,\,u\in U^+,\,\psi\in C_c^{\infty}(U^+).
\end{equation*}
By linearity and composition, this defines $X\psi$ for all $X\in \scrU(\fu^+_{\CC})$. The next lemma relates the action of $\scrU(\fu^+_{\CC})$ on $\scrI_{\vecv}^{\chi}(g)$ and $\chi$ for certain special $g\in G$:
\begin{lem}\label{CHIDIFF}
Let $X\in\fu^+$ and $g\in G$ be such that $\Ad_gX\in\fu^+$. Then for all $\vecv\in\scrH^{\infty}$ and $\chi\in C_c^{\infty}(U^+)$,
\begin{equation*}
\left(X\scrI_{\vecv}^{\chi}\right)(g)=-\scrI_{\vecv}^{\Ad_gX\chi}(g).
\end{equation*}
\end{lem}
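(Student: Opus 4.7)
The plan is to transfer the differentiation from $\pi$ onto $\chi$ via a change of variables in the integral over $U^+$, exploiting the hypothesis $\Ad_g X \in \fu^+$ (so that translation by $\exp(s\Ad_g X)$ preserves $U^+$) and the unimodularity of the nilpotent group $U^+$.

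First I would use \eqref{AVGDIFF} with $U = X$ to rewrite
\[
(X\scrI_{\vecv}^{\chi})(g) = \scrI_{d\pi(X)\vecv}^{\chi}(g) = \int_{U^+} \chi(u)\,\pi(ug)\,d\pi(X)\vecv\,du.
\]
Next I would re-express $\pi(ug)\,d\pi(X)\vecv$ as a derivative. Using the identity $g\exp(sX) = \exp(s\Ad_g X)\,g$ and the definition \eqref{diff1}, I would write
\[
\pi(ug)\,d\pi(X)\vecv = \left.\tfrac{d}{ds}\right|_{s=0}\pi\bigl(u\exp(s\Ad_g X)\bigr)\,\pi(g)\vecv.
\]
Since $\chi \in C_c^{\infty}(U^+)$ is compactly supported, and $s \mapsto \pi(u\exp(s\Ad_g X))\pi(g)\vecv$ is smooth uniformly on the support of $\chi$, I may exchange the integral and the $s$-derivative.

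The key step is the substitution. Setting $Y := \Ad_g X$, by assumption $Y \in \fu^+$, so $\exp(sY) \in U^+$. In particular, the change of variables $u \mapsto u\exp(-sY)$ is a diffeomorphism of $U^+$ onto itself, and since $U^+$ is nilpotent (hence unimodular), the Haar measure $du$ is preserved. After this substitution,
\[
\int_{U^+}\chi(u)\,\pi(u\exp(sY))\,\pi(g)\vecv\,du = \int_{U^+}\chi(u\exp(-sY))\,\pi(u)\,\pi(g)\vecv\,du.
\]
Differentiating at $s = 0$ and applying the chain rule, I would pick up $\left.\tfrac{d}{ds}\right|_{s=0}\chi(u\exp(-sY)) = -(Y\chi)(u)$ from the definition of the action of $\fu^+$ on $C_c^\infty(U^+)$. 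Folding this back and observing that $\pi(u)\pi(g)\vecv = \pi(ug)\vecv$ yields
\[
(X\scrI_{\vecv}^{\chi})(g) = -\int_{U^+}(Y\chi)(u)\,\pi(ug)\vecv\,du = -\scrI_{\vecv}^{\Ad_g X\,\chi}(g),
\]
as desired.

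There is no real obstacle here; the only points requiring care are the justification for interchanging the derivative with the integral (handled by compact support of $\chi$ together with $\vecv \in \scrH^\infty$) and tracking the sign that arises from the change of variables. The essential structural input is that $\Ad_g X$ remains in $\fu^+$, which ensures the one-parameter group generated by $\Ad_g X$ stays inside $U^+$ so that the translation and its Haar-preserving property make sense.
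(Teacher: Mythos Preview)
Your proof is correct and follows essentially the same route as the paper's: both use the conjugation identity $g\exp(sX)=\exp(s\Ad_gX)g$, a right-translation change of variables in $U^+$ (relying on unimodularity), and then differentiate at $s=0$ to transfer the derivative onto $\chi$. The only cosmetic difference is that you pass through \eqref{AVGDIFF} first, whereas the paper starts directly from the definition of $X\scrI_{\vecv}^{\chi}$; the underlying computation is the same.
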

\begin{proof}
\begin{align*}
\left(X\scrI_{\vecv}^{\chi}\right)(g)=&\left.\frac{d}{ds}\right|_{s=0}\int_{U^+} \chi(u)\pi\big(ug\exp(sX)\big)\vecv\,du\\&=
\left.\frac{d}{ds}\right|_{s=0}\int_{U^+} \chi(u)\pi\big(u\exp(s\Ad_gX)g\big)\vecv\,du
\\&=\int_{U^+} \left.\frac{d}{ds}\right|_{s=0}\chi\big(u\exp(-s\Ad_gX)\big)\pi(ug)\vecv\,du=\scrI_{\vecv}^{-\Ad_gX\chi}(g).
\end{align*}
\end{proof}

We now let $P=NAM$ be the \emph{unique} parabolic subgroup such that $Y\in\fa^+$ and $U^+=N$. This allows us to fix a root basis $X_1,\ldots, X_d$ of $\fu^+=\fn$; for each $j=1,\ldots, d$, there is some $\alpha_j>0$ such that $\Ad_{g_t}X_j=e^{\alpha_jt}X_j$ for all $t\in\RR$. Since $\log(g_1)=Y\in \fa$, we apply Lemma \ref{LIEIDENT} to find $W\in\NN_{>0}$,  $Z_0,\ldots, Z_{W-1}\in\Zg$, and $U_1,\ldots, U_d \in\Ug$ such that
\begin{equation}\label{Yident}
Y^W+Y^{W-1}Z_{W-1}+\ldots+ Y Z_1+Z_0=\sum_{j=1}^d X_jU_j
\end{equation}
(as is clear from the proof of Lemma \ref{LIEIDENT}, if $k_0\in K$ is such that $\Ad_{k_0}\fa=\fa_{\scrF}$, for some $\scrF\subset\Sigma_0^+(\fg,\fa_0)$, then  $W=|\scrW/\mathrm{Stab}_{\scrW}(\Ad_{k_0}Y)|$, $\scrW$ being the Weyl group defined with respect to $(\fa_0\oplus\ft_0)_{\CC}$). Combining \eqref{AVGDIFF}, \eqref{Yident} and Lemma \ref{CHIDIFF} gives
\begin{equation}\label{DIFFS1}
(Y^W\scrI_{\vecv}^{\chi})(g_t)+\sum_{i=0}^{W-1}(Y^i\scrI_{d\pi(Z_i)\vecv}^{\chi})(g_t)=-\sum_{j=1}^d e^{\alpha_jt}\scrI_{d\pi(U_j)\vecv}^{X_j\chi}(g_t).
\end{equation}
By Schur's lemma, each $d\pi(Z_i)$ acts as a scalar $a_i\in\CC$. This fact and calculations analogous to those in \eqref{XDIFFOPI} give
\begin{equation*}
(Y^{i}\scrI_{d\pi(Z_i)\vecv}^{\chi})(g_t)=a_i\frac{d^{i}}{dt^{i}}\scrI_{\vecv}^{\chi}(g_t),\qquad a_i\in\CC.
\end{equation*}
We let $\lambda_1,\ldots, \lambda_W$ be the roots of the polynomial $z^W+a_{W-1}z^{W-1}+\ldots+ a_1z+a_0$. Abusing notation slightly by letting $\scrI_{\vecv}^{\chi}(t)=\scrI_{\vecv}^{\chi}(g_t)$, \eqref{DIFFS1} may be rewritten as
\begin{equation}\label{MAINODE}
\left( \prod_{i=1}^W (\sfrac{d}{dt}-\lambda_i)\right)\scrI_{\vecv}^{\chi}(t)=-\sum_{j=1}^d e^{\alpha_jt}\scrI_{d\pi(U_j)\vecv}^{X_j\chi}(t).
\end{equation}
Now, each $\scrI_{d\pi(U_j)\vecv}^{X_j\chi}(t)$ will solve the same type of equation, i.e.\
\begin{equation*}
\left( \prod_{i=1}^W (\sfrac{d}{dt}-\lambda_i)\right)\scrI_{d\pi(U_j)\vecv}^{X_j\chi}(t)=-\sum_{k=1}^d e^{\alpha_kt}\scrI_{d\pi(U_kU_j)\vecv}^{X_kX_j\chi}(t).
\end{equation*}
This property will be important in obtaining a better rate than $\min_j \alpha_j$ in Theorem \ref{maintheorem1}.
\subsection{Burger's Formula}\label{BURGERFORMULASEC}
Here we present ``integral formulas'' for the solution of \eqref{MAINODE}. From now on assume that $(\pi,\scrH)$ is an irreducible unitary representation with $\eta$ as a rate of decay for the matrix coefficients of $g_{\RR}$ in $(\pi,\scrH)$; in particular (cf.\ Lemma \ref{SOBNORMS}), we assume that there exists $m\in\NN$ such that
 \begin{equation}\label{EPSILONBD}
 |\langle \pi(g_t)\vecu,\vecv\rangle|\ll_{\epsilon} \|\vecu\|_{\scrS^m(\scrH)}\|\vecv\|_{\scrS^m(\scrH)} e^{(\eta-\epsilon)t}
 \end{equation}
for all $\epsilon>0$, $t\leq 0$, and $\vecu,\vecv\in\scrH^{\infty}$. 
Throughout this section we assume that we have the differential equation
\begin{equation*}
\left(\prod_{i=1}^W (\sfrac{d}{dt}-\lambda_i)\right)\scrI_{\vecv}^{\chi}(t)=\psi(t).
\end{equation*}
We let $\underline{\lambda}$ be the multi-set of order $W$ consisting of the complex numbers $\lambda_i$, $i=1,\ldots,W$. Since we will require several different orderings of the $\lambda_i$s, we consider the following construction: let $\mathrm{Sym}_W$ denote the group of permutations of the set $\lbrace 1,\,2,\,\ldots,\,W\rbrace$. We may identify $\CC^W/\mathrm{Sym}_W$ with the family of multi-sets of complex numbers having order $W$. The element $\underline{\lambda}$ may thus be viewed as an element of $\CC^W/\mathrm{Sym}_W$. 

\begin{lem}\label{gint lem} 
Assume that $\|\psi(t)\|\leq C_{\psi,\gamma}e^{\gamma t}$ for some positive number $\gamma$ and all $t\leq 0$, and choose a number $\beta_{\gamma}$ so that \textit{i)} $0<\beta_{\gamma}< \gamma$ \textit{ii)} $\beta_{\gamma}\leq \eta$. Label and order the $\lambda_i$s in the following way:
\begin{equation*}
\underline{\lambda}=(\lambda_1,\lambda_2,\ldots,\lambda_W)=(\lambda_1^-,\lambda_2^-,\ldots, \lambda_{m_1}^-,\lambda^+_1,\lambda^+_2,\ldots, \lambda_{m_2}^+),
\end{equation*}
where $\beta_{\gamma}> \Re(\lambda_1^-)\geq \Re(\lambda_2^-)\geq\ldots\geq \Re(\lambda_{m_1}^-)$, and $\Re(\lambda_1^+)\geq \Re(\lambda_2^+)\geq\ldots \geq\Re(\lambda_{m_2}^+)\geq \beta_{\gamma}$. If $m_1>0$, then for all $t\in\RR$ we have
\begin{align}\label{gdefsec14}
\left( \prod_{i=1}^{m_2} (\sfrac{d}{dt}-\lambda_i^+)\right)\scrI_{\vecv}^{\chi}(t)= e^{\lambda_{m_1}^-t} \int_{-\infty}^t & e^{(\lambda_{m_1-1}^- -\lambda_{m_1}^- )t_{m_1}}  \int_{-\infty}^{t_{m_1}}  e^{(\lambda_{m_1-2}^- -\lambda_{m_1-1}^- )t_{m_1-1}} \\\notag&\ldots \int_{-\infty}^{t_3} e^{(\lambda_1^--\lambda^-_2)t_2}\int_{-\infty}^{t_2} e^{-\lambda_1^-t_1} \psi(t_1)\,dt_1\,dt_2\,\ldots dt_{m_1-1}\,dt_{m_1}
\end{align}
(where the product in the left-hand side is viewed as empty if $m_2=0$).
\end{lem}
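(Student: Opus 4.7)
Denote the left-hand side of \eqref{gdefsec14} by $G_0(t)$ and the right-hand side by $R(t)$. Since the differential operators $\sfrac{d}{dt}-\lambda_i^\pm$ have constant coefficients and commute pairwise, the standing hypothesis rewrites as $\prod_{i=1}^{m_1}(\sfrac{d}{dt}-\lambda_i^-)G_0(t)=\psi(t)$. My plan is to verify that $R(t)$ satisfies the same inhomogeneous ODE, and then to combine the decay hypothesis on $\psi$ with the matrix coefficient bound \eqref{EPSILONBD} to show that the difference $D(t):=G_0(t)-R(t)$ is the zero solution of the corresponding homogeneous equation.

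First I would check the ODE for $R(t)$ by setting $R_1(t)=\int_{-\infty}^t e^{-\lambda_1^-t_1}\psi(t_1)\,dt_1$ and, for $2\le k\le m_1$, $R_k(t)=\int_{-\infty}^t e^{(\lambda_{k-1}^--\lambda_k^-)t_k}R_{k-1}(t_k)\,dt_k$, so that $R(t)=e^{\lambda_{m_1}^-t}R_{m_1}(t)$. A short direct computation shows $(\sfrac{d}{dt}-\lambda_k^-)\bigl(e^{\lambda_k^-t}R_k(t)\bigr)=e^{\lambda_{k-1}^-t}R_{k-1}(t)$ for $k\ge 2$ and $(\sfrac{d}{dt}-\lambda_1^-)\bigl(e^{\lambda_1^-t}R_1(t)\bigr)=\psi(t)$; iterating these identities and invoking commutativity of the factors gives $\prod_{i=1}^{m_1}(\sfrac{d}{dt}-\lambda_i^-)R(t)=\psi(t)$. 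Convergence of each iterated integral at $-\infty$ is ensured by $\Re\lambda_k^-<\beta_\gamma<\gamma$ together with $\|\psi(s)\|\le C_{\psi,\gamma}e^{\gamma s}$ for $s\le 0$; in fact an easy induction telescopes the successive exponents to produce the norm bound $\|R(t)\|\ll e^{\gamma t}$ for all $t\le 0$.

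Second, I would obtain a decay estimate for $G_0(t)$. Using \eqref{AVGDIFF} one rewrites $G_0(t)=\scrI^{\chi}_{d\pi(Q(Y))\vecv}(t)$, where $Q(z)=\prod_{i=1}^{m_2}(z-\lambda_i^+)$ (so $Q(Y)\in\Ug$). The key subtlety here is that since $\pi$ is unitary, no bound of the form $\|G_0(t)\|\ll e^{\eta' t}$ with $\eta'>0$ is available at the level of norms. Instead, for an arbitrary fixed $\vecw\in\scrH^\infty$ one writes $\langle G_0(t),\vecw\rangle=\int_{U^+}\chi(u)\langle \pi(g_t)d\pi(Q(Y))\vecv,\pi(u^{-1})\vecw\rangle\,du$; applying \eqref{EPSILONBD} pointwise in $u$, together with the bound \eqref{transgbd} on $\|\pi(u^{-1})\vecw\|_{\scrS^m(\scrH)}$ which is uniform over the compact support of $\chi$, yields $|\langle G_0(t),\vecw\rangle|\ll_{\epsilon,\chi,\vecv,\vecw} e^{(\eta-\epsilon)t}$ for all $t\le 0$ and every $\epsilon>0$.

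Finally, for each fixed $\vecw\in\scrH^\infty$ the scalar $\langle D(t),\vecw\rangle$ solves the homogeneous ODE $\prod_{i=1}^{m_1}(\sfrac{d}{dt}-\lambda_i^-)X=0$ and is therefore a finite linear combination of functions $t^j e^{\lambda_i^-t}$ (with $i\in\{1,\ldots,m_1\}$ and $j$ less than the multiplicity of $\lambda_i^-$). Combining the two previous steps yields $|\langle D(t),\vecw\rangle|\ll e^{\min(\gamma,\,\eta-\epsilon)t}$ for $t\le 0$. Since $\Re\lambda_i^-<\beta_\gamma$ for every $i$ while $\beta_\gamma<\gamma$ and $\beta_\gamma\le\eta$, taking $\epsilon>0$ small enough that $\min(\gamma,\eta-\epsilon)>\max_i\Re\lambda_i^-$ forces every coefficient in this linear combination to vanish; hence $\langle D(t),\vecw\rangle\equiv 0$, and density of $\scrH^\infty$ in $\scrH$ gives $D\equiv 0$ on all of $\RR$. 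The hard part of the program is precisely this last decay estimate for $G_0$: unitarity blocks any norm-level exponential decay, so \eqref{EPSILONBD} must be brought in via pairings with test vectors, with the vector identity recovered only at the end by density.
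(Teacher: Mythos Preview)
Your proof is correct and uses the same essential ingredients as the paper, but organizes them differently. The paper proceeds inductively: it sets $e^{\lambda_i^-t}\psi_i(t)=\bigl(\prod_{j>i}(\sfrac{d}{dt}-\lambda_j^-)\bigr)G_0(t)$, uses the fundamental theorem of calculus to write $\psi_i(t)=\vecv_{i,\infty}+\int_{-\infty}^t e^{(\lambda_{i-1}^--\lambda_i^-)s}\psi_{i-1}(s)\,ds$, and then shows each limit vector $\vecv_{i,\infty}$ vanishes by exactly the same pairing-plus-\eqref{EPSILONBD} argument you isolate as the ``hard part''. Your approach instead verifies in one stroke that $R$ solves the same inhomogeneous ODE, then kills the homogeneous solution $D=G_0-R$ globally by observing that $\langle D(t),\vecw\rangle$ is an exponential polynomial in the $\lambda_i^-$ which decays (weakly) faster than $e^{\max_i\Re\lambda_i^-\,t}$ as $t\to-\infty$, forcing all coefficients to zero. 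Both routes hinge on the same insight---that unitarity blocks norm decay of $G_0$, so the matrix-coefficient bound must enter through pairings with smooth test vectors---and both yield the intermediate estimate $\|R(t)\|\ll e^{\gamma t}$ for $t\le 0$ needed downstream. Your packaging is a bit more streamlined; the paper's step-by-step construction has the advantage of producing the successive bounds $\|\psi_i(t)\|\ll e^{(\gamma-\Re\lambda_i^-)t}$ explicitly along the way, which feed directly into the proof of Proposition~\ref{BurgerProp1}.
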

\begin{proof}
The proof follows that of \cite[Lemma 3]{Edwards}: we set $\lambda_0^-=0$, and for $i=0,\,1,\ldots,\,m_1$, let $\psi_i(t)$ be such that
\begin{equation*}
e^{\lambda^-_i t}\psi_i(t)=\left( \prod_{j={i+1}}^{m_1}( \sfrac{d}{dt}-\lambda^-_j)\right)\left( \prod_{j=1}^{m_2}( \sfrac{d}{dt}-\lambda^+_j)\right)\scrI_{\vecv}^{\chi}(t).
\end{equation*}
In particular,
\begin{equation}\label{psim1-}
e^{\lambda^-_{m_1} t}\psi_{m_1}(t)=\left( \prod_{j=1}^{m_2}( \sfrac{d}{dt}-\lambda^+_j)\right)\scrI_{\vecv}^{\chi}(t).
\end{equation}
Note that for $i\geq1$,
\begin{equation*}
\psi_i'(t)=e^{(\lambda_{i-1}^--\lambda_i^-)t}\psi_{i-1}(t).
\end{equation*}
By the fundamental theorem of calculus,
\begin{equation*}
\psi_i(t)-\psi_i(r)=\int_r^t e^{(\lambda^-_{i-1}-\lambda^-_i)s}\psi_{i-1}(s)\,ds.
\end{equation*}
Now let $i\in\lbrace 1,\ldots,m_1\rbrace$, and assume we have the bound
\begin{equation}\label{psiibndas}
 \|\psi_{i-1}(s)\|\leq D_{i-1}e^{(\gamma-\Re(\lambda_{i-1}^-)) s}\qquad\mathrm{for\,\,all}\,\,s\leq 0.
\end{equation} 
Then for all $r\leq t\leq 0$ we have
\begin{align*}
\|\psi_i(t)-\psi_i(r)\|&\leq \int_r^t e^{\big(\Re(\lambda^-_{i-1})-\Re(\lambda^-_{i})\big)s} D_{i-1}e^{(\gamma-\Re(\lambda_{i-1}^-))s}\,ds\\&\quad\ll e^{(\gamma-\Re(\lambda^-_i))t}-e^{(\gamma-\Re(\lambda^-_i))r}.
\end{align*}
It follows that $\psi_i(r)$ converges to some vector $\vecv_{i,\infty}$ as $r\rightarrow-\infty$, and
\begin{equation*}
\psi_i(t)=\vecv_{i,\infty}+\int_{-\infty}^{t}e^{(\lambda_{i-1}^--\lambda^-_i)s}\psi_{i-1}(s)\,ds.
\end{equation*}
We now wish to prove that $\vecv_{i,\infty}=0$. Let $\vecw_i=\left( \prod_{j={i+1}}^W( d\pi(Y)-\lambda_j)\right)\vecv$, and note that from the definition of $\psi_i(r)$, 
\begin{equation*}
 \psi_i(r)=e^{-\lambda^-_ir}\scrI^{\chi}_{\vecw_i}(r),
\end{equation*}
hence
\begin{equation*}
\vecv_{i,\infty}=\lim_{r\rightarrow-\infty} e^{-\lambda^-_ir}\scrI^{\chi}_{\vecw_i}(r).
\end{equation*}
We now use the rate of decay of matrix coefficients to prove that $\langle \vecv_{i,\infty},\vecu\rangle=0$ for all $\vecu\in\scrH^{\infty}$. Since $\scrH^{\infty}$ is dense in $\scrH$, we may then conclude that $\vecv_{i,\infty}=0$. We have
\begin{align*}
|\langle\vecv_{i,\infty},\vecu\rangle|&= \lim_{r\rightarrow-\infty} |\langle  e^{-\lambda^-_ir}\scrI^{\chi}_{\vecw_i}(r),\vecu\rangle| \leq \liminf_{r\rightarrow-\infty}e^{-\Re(\lambda^-_i)r} \int_{U^+} |\chi(u)||\langle\pi(ug_r)\vecw_i,\vecu\rangle| \,du\\&\quad=\liminf_{r\rightarrow-\infty}e^{-\Re(\lambda^-_i)r}\int_{U^+} |\chi(u)||\langle\pi(g_r)\vecw_i,\pi(u^{-1})\vecu\rangle |\,du\\&\qquad \ll_{\epsilon} \liminf_{r\rightarrow-\infty}  \|\chi\|_{L^{\infty}(N)}\|\vecw_i\|_{\scrS^m(\scrH)}e^{(\eta-\epsilon-\Re(\lambda^-_i))r} \int_{\supp(\chi)}  \!\!\|\pi(u^{-1})\vecu\|_{\scrS^m(\scrH)}\,du
\end{align*}
where \eqref{EPSILONBD} was used in the last inequality. Choosing $\epsilon$ so that $\eta-\epsilon-\Re(\lambda_i^-) > 0$, and noting that by \eqref{transgbd}, the integral over $\supp(\chi)$ is finite, we conclude that $\langle \vecv_{i,\infty},\vecu\rangle=0$. We have thus proved that under the assumption \eqref{psiibndas}, we have
\begin{equation}\label{gdeflemeq}
\psi_i(t) =\int_{-\infty}^{t}e^{(\lambda_{i-1}^--\lambda^-_i)s}\psi_{i-1}(s)\,ds,
\end{equation}
and 
\begin{equation}\label{PSIIBD}
\| \psi_i(t)\|\leq  \int_{-\infty}^t e^{\big(\Re(\lambda^-_{i-1})-\Re(\lambda^-_i)\big)s}  D_{i-1}e^{(\gamma-\Re(\lambda_{i-1}^-)) s}\,ds=\frac{  D_{i-1}e^{(\gamma-\Re(\lambda_{i}^-)) s}}{\gamma-\Re(\lambda_i^-)}.
\end{equation}
Since $\|\psi_0(t)\|=\|\psi(t)\|\leq C_{\psi,\gamma}e^{\gamma t}$, induction establishes that \eqref{gdeflemeq} and \eqref{psiibndas} hold for all $i$, $1\leq i \leq m_1$. Using this fact together with $\psi_0(t)=\psi(t)$ and \eqref{psim1-}, we conclude that \eqref{gdefsec14} holds.
\end{proof}
Our first version of ``Burger's formula'' (cf.\ \cite[Lemma 1]{Burger90} and \cite[Proposition 4]{Edwards}) can now be stated. Let $|\underline{\lambda}|_{\infty}=\max_{1\leq i\leq W}|\lambda_i|$. We then have the following:
\begin{prop}\label{BurgerProp1}
There exist (completely explicit) functions
\begin{equation*}F:\,(\CC^W/\mathrm{Sym}_W)\times \RR_{>0}\times\RR_{\leq 0}\times \RR_{\leq 0}\rightarrow\CC
\end{equation*}
and 
\begin{equation*}
F_0,\, F_1,\ldots,\,F_W:\,(\CC^W/\mathrm{Sym}_W)\times \RR_{>0}\times\RR_{\leq 0}\rightarrow\CC,
\end{equation*}
such that the following hold:
\begin{enumerate}[i)] 
\item Whenever $\scrI_{\vecv}^{\chi}(t)$, $\psi(t)$ and $\beta_{\gamma}$ satisfy the assumptions of Lemma \ref{gint lem}, we have for all $t\leq 0$:
\begin{equation*}
\scrI_{\vecv}^{\chi}(t)=\int_{-\infty}^0 F(\underline{\lambda},\beta_{\gamma},t,s) \psi(s)\,ds +\sum_{i=0}^{W} F_{i}(\underline{\lambda},\beta_{\gamma},t)\scrI_{d\pi(Y^{i})\vecv}^{\chi}(0).
\end{equation*} 
\item
\begin{equation*}
|F(\underline{\lambda},\beta_{\gamma},t,s)|\ll |t|^{W-1-m_0}|s|^{m_0} e^{\beta_{\gamma}(t-s)}.
\end{equation*}
\item
\begin{equation*}
|F_{i}(\underline{\lambda},\beta_{\gamma}, t)| \ll (1+|\underline{\lambda}|_{\infty}^{W})(1+|t|^W)e^{\beta_{\gamma} t}.
\end{equation*}
\end{enumerate}
where $m_0=\max\lbrace 0, m_1-1\rbrace$ ($m_1$ being the same as in Lemma \ref{gint lem}). The implied constants depend only on $W$.
\end{prop}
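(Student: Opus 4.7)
The plan is to take the identity of Lemma \ref{gint lem}, which rewrites $\prod_{i=1}^{m_2}(\sfrac{d}{dt}-\lambda_i^+)\scrI_\vecv^\chi(t)$ as an iterated integral of $\psi$ along $(-\infty,t]$, and then undo the remaining operator $\prod_{i=1}^{m_2}(\sfrac{d}{dt}-\lambda_i^+)$ by inverting its factors one at a time, this time imposing boundary conditions at $t=0$ rather than at $-\infty$. (The case $m_1=0$ is handled uniformly by taking the right-hand side of \eqref{gdefsec14} to mean $\psi(t)$, bypassing Lemma \ref{gint lem}.) Inverting from $0$ is the natural choice for the $\lambda_i^+$'s, since their real parts are all $\geq\beta_\gamma\geq 0$, so the exponentials they produce still decay like $e^{\beta_\gamma t}$ as $t\to-\infty$, matching what parts \textit{ii)} and \textit{iii)} require.

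Writing out the iterative inversion via the standard first-order formula---the unique solution to $(\sfrac{d}{dt}-\lambda)g=f$ with $g(0)=g_0$ is $g(t)=e^{\lambda t}g_0-\int_t^0 e^{\lambda(t-s)}f(s)\,ds$---and denoting by $\scrL(t)$ the right-hand side of \eqref{gdefsec14}, one obtains
\begin{equation*}
\scrI_\vecv^\chi(t) = \sum_{i=0}^{m_2-1} G_i(\underline{\lambda},t)\,\sfrac{d^i}{dt^i}\scrI_\vecv^\chi(0) + \int_t^0 K^+(t,u)\,\scrL(u)\,du,
\end{equation*}
where $K^+(t,u)$ is the Green's function of $\prod_{i=1}^{m_2}(\sfrac{d}{dt}-\lambda_i^+)$ with zero data at $0$, and the $G_i$ are the fundamental polynomial-exponential solutions from the boundary terms. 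The derivatives at $0$ convert to $\scrI_{d\pi(Y^i)\vecv}^\chi(0)$ via \eqref{AVGDIFF}, producing the $F_i$ (one takes $F_i\equiv 0$ for $m_2\leq i\leq W$ to match the stated summation range). Writing $\scrL(u)=\int_{-\infty}^u K^-(u,s)\psi(s)\,ds$ with $K^-$ the corresponding Green's function of $\prod_{i=1}^{m_1}(\sfrac{d}{dt}-\lambda_i^-)$ with zero data at $-\infty$, and swapping the order of integration via Fubini, yields
\begin{equation*}
F(\underline{\lambda},\beta_\gamma,t,s) = \int_{\max(t,s)}^0 K^+(t,u)\,K^-(u,s)\,du.
\end{equation*}

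For the bounds, the key sign-observations are that the exponential factors are uniformly controlled by $\beta_\gamma$ on the relevant intervals: for $t\leq u\leq 0$ with $\Re(\lambda_i^+)\geq\beta_\gamma\geq 0$ one has $e^{\Re(\lambda_i^+)(t-u)}\leq e^{\beta_\gamma(t-u)}$, and for $s\leq u$ with $\Re(\lambda_j^-)<\beta_\gamma$ one has $e^{\Re(\lambda_j^-)(u-s)}\leq e^{\beta_\gamma(u-s)}$. These combine to give a uniform factor $e^{\beta_\gamma(t-s)}$ in the integrand for $F$, while each Green's function contributes a polynomial factor of degree at most $m_2-1$ in $|t-u|$ (resp.\ $m_1-1$ in $|u-s|$). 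A case split on $s\leq t$ versus $t\leq s$ in $[-\infty,0]$, using the crude estimates $|t-u|\leq|t|$ and $|u-s|\leq|s|$ in the first case, and integrating $(u-s)^{m_1-1}$ directly over $[s,0]$ followed by $|s|\leq|t|$ in the second case, yields $|F(\underline{\lambda},\beta_\gamma,t,s)|\ll|t|^{m_2}|s|^{m_1-1}e^{\beta_\gamma(t-s)}$ in both cases. This matches the claim since $W-1-m_0=m_2$ and $m_0=m_1-1$ when $m_1\geq 1$ (with the obvious simplification when $m_1=0$: $K^-$ plays no role, the factor $|s|^{m_1-1}$ drops out, and $W-1-m_0=m_2-1$). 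The $F_i$ are polynomials in $t$ of degree at most $W-1$ times exponentials $e^{\lambda_j^+ t}$ with coefficients depending polynomially on $\underline{\lambda}$, so the same sign-observation yields the $(1+|\underline{\lambda}|_\infty^W)(1+|t|^W)e^{\beta_\gamma t}$ bound.

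The main obstacle is the bookkeeping that produces the specific split $|t|^{W-1-m_0}|s|^{m_0}$: one has to verify that the two case-by-case crude estimates agree up to a constant depending only on $W$, and in particular that the extra factor of $|s|$ produced by direct integration in the regime $t\leq s\leq 0$ can be absorbed into $|t|$ via the inequality $|s|\leq|t|$ valid in that regime. The construction and boundedness of the Green's functions $K^\pm$, along with the identification $\sfrac{d^i}{dt^i}\scrI_\vecv^\chi(0)=\scrI_{d\pi(Y^i)\vecv}^\chi(0)$ coming from \eqref{AVGDIFF}, are otherwise standard.
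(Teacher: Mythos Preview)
Your approach is essentially the same as the paper's: apply Lemma \ref{gint lem} to handle the $\lambda_i^-$ factors by integrating from $-\infty$, then undo the $\lambda_i^+$ factors one at a time via the first-order formula with initial data at $0$. The paper carries this out with the same recursive structure (their $\Phi_i$ and $\Pi_{i,j}$ are exactly your $K^+$-convolutions and boundary terms built up step by step), and the bounds come from the same sign observation $\Re(\lambda_i^+)\geq\beta_\gamma$ on the negative half-line.

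One point needs correcting: your description of the $F_i$ as ``polynomials in $t$ of degree at most $W-1$ times exponentials $e^{\lambda_j^+ t}$ with coefficients depending polynomially on $\underline{\lambda}$'' is not accurate, and if taken literally would not give a bound uniform in $\underline{\lambda}$. Already for $m_2=2$ one has $F_1(\underline{\lambda},\beta_\gamma,t)=-\int_t^0 e^{\lambda_2^+(t-u)+\lambda_1^+u}\,du=(e^{\lambda_1^+t}-e^{\lambda_2^+t})/(\lambda_1^+-\lambda_2^+)$ when $\lambda_1^+\neq\lambda_2^+$, whose coefficients blow up near coincidences. The correct and uniform argument is to leave the $F_i$ in their iterated-integral form and bound the integrand: each factor $e^{\lambda_j^+(\cdot)}$ is bounded by $e^{\beta_\gamma(\cdot)}$ on the relevant interval, and each integration contributes a factor of $|t|$, while the boundary values $g_i(0)=\big(\prod_{j>i}(d\pi(Y)-\lambda_j^+)\big)\vecv$ at $t=0$ expand with coefficients that are genuinely polynomial (elementary symmetric) in the $\lambda_j^+$. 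This is precisely how the paper proceeds (their inductive bound on $\Pi_{i,j}$), and once you replace your last sentence by that argument your proof is complete.
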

\begin{proof}
We use the same notation as in Lemma \ref{gint lem}, as well as letting $\Psi(t)$ denote the left-hand side of \eqref{gdefsec14}. We also let $\lambda_0^+=0$, and define $\Psi_i(t)$ successively for $i=m_2,$ $m_2-1,\ldots$, $0$ through
\begin{align}\label{gidiffs}
\scrI_{\vecv}^{\chi}(t)&=e^{\lambda_{m_2}^+t} \Psi_{m_2}(t),\\\notag \sfrac{d}{dt} \Psi_{i+1}(t)&= e^{(\lambda^+_i-\lambda^+_{i+1})t}\Psi_i(t).
\end{align}
From these definitions, we obtain
\begin{equation*}
e^{\lambda_i^+t} \Psi_i(t)=\left( \prod_{j=i+1}^{m_2} (\sfrac{d}{dt}-\lambda_j^+)\right)\scrI_{\vecv}^{\chi}(t),
\end{equation*}
with (as usual) the product being viewed as empty if $i=m_2$. In particular, $\Psi_0(t)=\psi(t)$ if $m_1=0$, and $\Psi_0(t)=\Psi(t)$ otherwise. Making the definition
\begin{equation*}
\vecv_i=\left( \prod_{j=i+1}^{m_2} (d\pi(Y)-\lambda_j^+)\right)\vecv
\end{equation*}
(so $\vecv_{m_2}=\vecv$) gives
\begin{equation*}
\Psi_i(t)=\scrI_{\vecv_i}^{\chi}(t).
\end{equation*}
There are thus polynomials $\scrP_{i,j}(\underline{\lambda})$ (in fact, elementary symmetric polynomials in the $\lambda_i^+$s) such that
\begin{equation*}
\Psi_i(0)=\sum_{j=0}^{m_2}\scrP_{i,j}(\underline{\lambda})\scrI_{d\pi(Y^j)\vecv}^{\chi}(0).
\end{equation*}
Assuming $m_1>0$, Lemma \ref{gint lem} gives
\begin{align*}
\Psi_0(t)= \int_{-\infty} ^t \ldots \int_{-\infty}^{r_3} \int_{-\infty}^{r_2} \psi(r_1) \exp\left(\lambda_{m_1}^-t-\lambda_1^-r_1 + \sum_{i=2}^{m_1} (\lambda_{i-1}^- -\lambda_{i}^-)r_i  \right)\,dr_1\,dr_2\,\ldots dr_{m_1}.
\end{align*}
Reversing the order of the integration gives
\begin{align*}
\Psi_0(t)=\int_{-\infty}^0 \psi(s)\Phi_0(t,s)\,ds,
\end{align*}
where
\begin{equation*}
\Phi_0(t,s)= \begin{cases}0\qquad &\mathrm{if}\,s>t\\ \int_{s}^{t} \int_{r_2}^{t} \ldots \int_{r_{m_1-1}}^t\!\! \exp\!\left(\lambda_{m_1}^-t-\lambda_1^-s + \sum_{i=2}^{m_1} (\lambda_{i-1}^- -\lambda_{i}^-)r_i  \right)dr_{m_1}\,\ldots dr_2\qquad &\mathrm{if}\, s\leq t.\end{cases}
\end{equation*}
Repeatedly using $\Re(\lambda_{i-1}^-)\geq \Re(\lambda_{i}^-)$, and finally $\beta_{\gamma}>\Re(\lambda_1^-)$, gives
\begin{equation}\label{Phi0bd}
|\Phi_0(t,s)|\leq \begin{cases}0\qquad &\mathrm{if}\,s>t\\(t-s)^{m_1-1}\exp\big(\beta_{\gamma}(t-s)\big)\qquad &\mathrm{if}\, s\leq t.\end{cases}
\end{equation}
We now write
\begin{equation*}
\Psi_i(t)=\int_{-\infty}^0\psi(s)\Phi_i(t,s)\,ds+\sum_{j=0}^{m_2}\Pi_{i,j}(t,\underline{\lambda})\scrI_{d\pi(Y^j)\vecv}^{\chi}(0),
\end{equation*}
where $\Pi_{0,j}(t,\underline{\lambda})\equiv 0$, and the other $\Pi_{i,j}$ and $\Phi_i$ are defined iteratively using \eqref{gidiffs} (if $m_2>0$) and the fundamental theorem of calculus, i.e.\ for $i=0,$ $1,\ldots, m_2-1$,
\begin{align*}
\Psi_{i+1}(t)&=\Psi_{i+1}(0)-\int_t^0 e^{(\lambda_i^+-\lambda_{i+1}^+)r}\Psi_i(r)\,dr\\=&\left(\sum_{j=0}^{m_2}\scrP_{i+1,j}(\underline{\lambda})\scrI_{d\pi(Y^j)}^{\chi}(0)\right)\\&\qquad\qquad\qquad-\int_t^0e^{(\lambda_i^+-\lambda_{i+1}^+)r}\left( \int_{-\infty}^0\psi(s)\Phi_i(r,s)\,ds+\sum_{j=0}^{m_2}\Pi_{i,j}(r,\underline{\lambda})\scrI_{d\pi(Y^j)\vecv}^{\chi}(0)\right)\,dr.
\end{align*}
This yields
\begin{equation*}
\Phi_{i+1}(t,s)=-\int_t^0 e^{(\lambda_i^+-\lambda_{i+1}^+)r}\Phi_i(r,s)\,dr,
\end{equation*}
and
\begin{equation}\label{PIDEF}
\Pi_{i+1,j}(t,\underline{\lambda})=\scrP_{i+1,j}(\underline{\lambda})-\int_t^0 e^{(\lambda_i^+-\lambda_{i+1}^+)r} \Pi_{i,j}(r,\underline{\lambda})\,dr.
\end{equation}
Note that all the $\scrP_{i,j}$ have degrees less than or equal to $m_2$; there is thus an absolute constant $D$ such that for all $i,j$, $|\scrP_{i,j}(\underline{\lambda})|\leq D(1+|\underline{\lambda}|_{\infty}^{m_2})$. By induction, we have
\begin{equation}\label{Pibd}
|\Pi_{i,j}(t,\underline{\lambda})|\ll (1+|\underline{\lambda}|_{\infty}^{m_2})(1+|t|^{i}).
\end{equation}
In a similar fashion, 
\begin{equation*}
|\Phi_i(t,s)|\leq |t|^{i}|s|^{m_1-1}e^{\beta_{\gamma} (t-s)}e^{-\Re(\lambda_i^+) t},
\end{equation*}
where the bound for $i=0$ holds by \eqref{Phi0bd} (recall that $\lambda_0^+=0$), and for $i=1$, $2,\ldots,\,m_2$ by induction. Since $\scrI_{\vecv}^{\chi}(t)=e^{\lambda_{m_2}^+t}\Psi_{m_2}(t)$, these bounds establish \textit{i)}, \textit{ii)}, and \textit{iii)} when $m_1>0$. 

It remains to consider the case $m_1=0$. We then have $\Psi_0(t)=\psi(t)$,
hence
\begin{align*}
\Psi_1(t)=&\Psi_1(0)-\int_t^0 e^{-\lambda_1^+s}\psi(s)\,ds\\&=\left(\sum_{j=0}^{m_2}\scrP_{1,j}(\underline{\lambda})\scrI_{d\pi(Y^j)\vecv}^{\chi}(0)\right)+\int_{-\infty}^0 \phi_{1}(t,s)\psi(s)\,ds,
\end{align*}
where 
\begin{equation*}
\phi_1(t,s)=\begin{cases} 0\qquad&\mathrm{if}\, s<t\\ -e^{-\lambda_1^+s}\qquad&\mathrm{if}\,s\geq t. \end{cases}
\end{equation*}
We now proceed as in the case $m_1>0$: making repeated use of the fundamental theorem of calculus, we write, for $i=1$, $2,\ldots$, $m_2$,
\begin{equation*}
\Psi_i(t)=\int_{-\infty}^0\psi(s)\phi_i(t,s)\,ds+\sum_{j=0}^{m_2}\Pi_{i,j}(t,\underline{\lambda})\scrI_{d\pi(Y^j)\vecv}^{\chi}(0),
\end{equation*}
where $\Pi_{1,j}(t,\underline{\lambda})=\scrP_{1,j}(\underline{\lambda})$, and $\Pi_{i,j}(t,\underline{\lambda})$ and $\phi_i(t,s)$ for $i=2$, $3,\ldots, m_2$ are given recursively by \eqref{PIDEF} and
\begin{equation*}
\phi_{i+1}(t,s)=-\int_t^0e^{(\lambda_{i}^+-\lambda_{i+1}^+)r}\phi_i(r,s)\,dr,
\end{equation*} 
respectively. Induction now gives
\begin{equation*}
|\phi_{i}(t,s)|\leq \begin{cases} 0\qquad&\mathrm{if}\, s<t\\ (s-t)^{i-1}e^{-\Re(\lambda_i^+)s}\qquad&\mathrm{if}\,s\geq t. \end{cases}
\end{equation*}
Observing that $\scrI_{\vecv}^{\chi}(t)=e^{\lambda_{m_2}^+t}\Psi_{m_2}(t)$, as well as the fact that the bound \eqref{Pibd} still holds, concludes the proof. 
\end{proof}

We now use Proposition \ref{BurgerProp1} to demonstrate how the bound on $\psi$ can be ``lifted'' to $\scrI_{\vecv}^{\chi}$:

\begin{cor}\label{fbd}
Suppose that $\scrI_{\vecv}^{\chi}(t)$, $\psi(t)$ and $\beta_{\gamma}$ satisfy the assumptions of Lemma \ref{gint lem}. There then exists $n_0\in\NN$ such that for all $t\leq 0$, $\epsilon>0$,
\begin{equation*}
\|\scrI_{\vecv}^{\chi}(t)\|\ll\left( \|\chi\|_{L^1(U^+)}\|\vecv\|_{\scrS^{n_0}(\scrH)}+C_{\psi,\gamma}\right) e^{(\beta_{\gamma}-\epsilon) t},
\end{equation*}
where the implied constant depends on $G$, $g_{\RR}$, $\epsilon$, and $\gamma-\beta_{\gamma}$.
\end{cor}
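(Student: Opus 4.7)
The plan is to directly apply Proposition \ref{BurgerProp1}, bounding each of the two contributions in turn and finally absorbing the polynomial prefactors into the exponential loss $e^{-\epsilon t}$.

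First, I would bound the integral term. By Proposition \ref{BurgerProp1}(ii), $|F(\underline\lambda,\beta_\gamma,t,s)|\ll |t|^{W-1-m_0}|s|^{m_0}e^{\beta_\gamma(t-s)}$, so combined with the hypothesis $\|\psi(s)\|\leq C_{\psi,\gamma}e^{\gamma s}$ we get
\begin{equation*}
\Bigl\|\int_{-\infty}^0 F(\underline\lambda,\beta_\gamma,t,s)\psi(s)\,ds\Bigr\|\ll C_{\psi,\gamma}|t|^{W-1-m_0}e^{\beta_\gamma t}\int_{-\infty}^0 |s|^{m_0}e^{(\gamma-\beta_\gamma)s}\,ds,
\end{equation*}
and the remaining integral is finite precisely because $\gamma-\beta_\gamma>0$ (with value depending on $\gamma-\beta_\gamma$ and $W$).

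Next I would handle the boundary terms $\sum_{i=0}^W F_i(\underline\lambda,\beta_\gamma,t)\scrI^{\chi}_{d\pi(Y^i)\vecv}(0)$. The trivial estimate
\begin{equation*}
\|\scrI^{\chi}_{d\pi(Y^i)\vecv}(0)\|\leq \|\chi\|_{L^1(U^+)}\|d\pi(Y^i)\vecv\|\leq \|\chi\|_{L^1(U^+)}\|\vecv\|_{\scrS^{i}(\scrH)}
\end{equation*}
combines with Proposition \ref{BurgerProp1}(iii) to yield a bound of the shape
\begin{equation*}
\|\chi\|_{L^1(U^+)}(1+|\underline\lambda|_{\infty}^W)(1+|t|^W)e^{\beta_\gamma t}\sum_{i=0}^W\|\vecv\|_{\scrS^{i}(\scrH)}.
\end{equation*}
The main nuisance is the factor $(1+|\underline\lambda|_{\infty}^W)$, which a priori depends on the representation $(\pi,\scrH)$. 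To remove this dependence I would recall that each $d\pi(Z_i)$ acts by a scalar $a_i$ with $|a_i|\,\|\vecv\|=\|d\pi(Z_i)\vecv\|\leq \|\vecv\|_{\scrS^{\deg Z_i}(\scrH)}$, and use a standard root bound (e.g.\ Cauchy's bound) for the monic polynomial $z^W+a_{W-1}z^{W-1}+\ldots+a_0$ to get $|\underline\lambda|_\infty^W\ll 1+\max_i|a_i|^W$. Absorbing each factor of $|a_i|$ by replacing $\vecv$ with $d\pi(Z_i)\vecv$ (which only inflates the Sobolev order), we conclude that $(1+|\underline\lambda|_\infty^W)\|d\pi(Y^i)\vecv\|\ll \|\vecv\|_{\scrS^{n_0}(\scrH)}$ for some $n_0\in\NN$ depending only on $W$ and the degrees of $Z_0,\ldots,Z_{W-1}$ (hence only on $G$ and $g_\RR$).

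Finally, putting both pieces together gives a bound of the form
\begin{equation*}
\|\scrI^{\chi}_\vecv(t)\|\ll \bigl(C_{\psi,\gamma}+\|\chi\|_{L^1(U^+)}\|\vecv\|_{\scrS^{n_0}(\scrH)}\bigr)(1+|t|^W)e^{\beta_\gamma t},
\end{equation*}
and since $t\leq 0$ and $\epsilon>0$, the elementary estimate $(1+|t|^W)\leq C_\epsilon e^{-\epsilon t}$ converts the polynomial loss into the exponential loss $e^{(\beta_\gamma-\epsilon)t}$, with $C_\epsilon$ depending on $\epsilon$. This yields exactly the claimed inequality, with the stated dependence of constants. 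The main obstacle is the careful book-keeping in the previous paragraph to eliminate the spurious $|\underline\lambda|_\infty$ factor, since the statement of the corollary promises a constant independent of $(\pi,\scrH)$.
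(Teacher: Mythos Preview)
Your proposal is correct and follows essentially the same route as the paper's proof: apply Proposition~\ref{BurgerProp1}, bound the integral term via~\textit{(ii)} and the hypothesis on $\psi$, bound the boundary terms via~\textit{(iii)} and the trivial $L^1$ estimate, eliminate the $(1+|\underline\lambda|_\infty^W)$ factor using Cauchy's bound together with $|a_i|^W\|\vecw\|=\|d\pi(Z_i^W)\vecw\|$ (since $d\pi(Z_i)$ is scalar), and finally trade the polynomial $(1+|t|^W)$ for the exponential loss $e^{-\epsilon t}$. The only cosmetic difference is that the paper first passes to $\|\vecv\|_{\scrS^W(\scrH)}$ before absorbing $|\underline\lambda|_\infty^W$, whereas you absorb it directly into each $\|d\pi(Y^i)\vecv\|$; the outcome and the resulting $n_0$ are the same.
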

\begin{proof}
By Proposition \ref{BurgerProp1},
\begin{equation*}
\|\scrI_{\vecv}^{\chi}(t)\| \leq \int_{-\infty}^0 |F(\underline{\lambda},\beta_{\gamma},t,s)|\| \psi(s)\|\,ds +\sum_{i=0}^{W} |F_{i}(\underline{\lambda},\beta_{\gamma},t)|\|\scrI_{d\pi(Y^{i})\vecv}^{\chi}(0)\|.
\end{equation*}
Using $\|\psi(s)\|\leq C_{\psi,\gamma} e^{\gamma s}$, the bound on $F_{i}(\underline{\lambda},\beta_{\gamma},t)$ from Proposition \ref{BurgerProp1} \textit{iii)}, and
\begin{equation*}
\|\scrI_{d\pi(Y^{i})\vecv}^{\chi}(0)\|\leq \int_{U^+} |\chi(u)| \| \pi(u)d\pi(Y^{i})\vecv\|\,du\ll \|\chi\|_{L^1(U^+)}\|\vecv\|_{\scrS^{i}(\scrH)}
\end{equation*}
gives
\begin{equation*}
\|\scrI_{\vecv}^{\chi}(t)\|\ll (1+|\underline{\lambda}|_{\infty}^{W})(1+|t|^W)e^{\beta_{\gamma} t}\|\chi\|_{L^1(U^+)}\|\vecv\|_{\scrS^W(\scrH)}+C_{\psi,\gamma}\int_{-\infty}^0 |F(\underline{\lambda},\beta_{\gamma},t,s)|e^{\gamma s}\,ds.
\end{equation*}
Now, by Proposition \ref{BurgerProp1} \textit{ii)}, 
\begin{align}\label{F(t,s)Int}
\int_{-\infty}^0 |F(\underline{\lambda},\beta_{\gamma},t,s)|e^{\gamma s}\,ds &\ll |t|^{W-1-m_0}\int_{-\infty}^0 |s|^{m_0}e^{\beta_{\gamma}(t-s)}e^{\gamma s} \,ds \\\notag&\quad \ll (1+|t|^{W}) e^{\beta_{\gamma} t}.
\end{align}
We now have
\begin{equation*}
\|\scrI_{\vecv}^{\chi}(t)\|\ll \left( \|\chi\|_{L^1(U^+)}(1+|\underline{\lambda}|_{\infty}^{W})\|\vecv\|_{\scrS^W(\scrH)}+C_{\psi,\gamma}\right) e^{(\beta_{\gamma}-\epsilon) t}.
\end{equation*}
The definition of the $\lambda_i$s is now used to bound $|\underline{\lambda}|_{\infty}$: recall that the $\lambda_i$s are the solutions to the polynomial equation
\begin{equation*}
z^W+d\pi(Z_{W-1})z^{W-1}+\ldots+d\pi(Z_1)z+d\pi(Z_0)=0,
\end{equation*}
so by Cauchy's bound, $|\underline{\lambda}|_{\infty}\leq 1+ \max_i |d\pi(Z_i)|$. This gives 
\begin{align}\label{lambdaSob}
|\underline{\lambda}|_{\infty}^{W}\|\vecv\|_{\scrS^W(\scrH)}\leq& \|\vecv\|_{\scrS^W(\scrH)}+ \max_i |d\pi(Z_i^W)|\|\vecv\|_{\scrS^W(\scrH)}\\\notag &=\|\vecv\|_{\scrS^W(\scrH)}+\max_i \|d\pi(Z_i^W)\vecv\|_{\scrS^W(\scrH)}\ll \|\vecv\|_{\scrS^{n_0}(\scrH)}
\end{align}
for some $n_0\in \NN$.
\end{proof}

We will now make use of the smoothness of $\chi$ to repeatedly apply Proposition  \ref{BurgerProp1} and Corollary \ref{fbd} to obtain the final version of ``Burger's formula'' that will be used in the proof of Theorem \ref{maintheorem1}. In preparation of the proof, we first introduce some more notation.

Recall that we have fixed a basis $X_1,\,\ldots, X_d$ of $\fu^+$, and elements $U_1,\,\ldots, U_d$ in $\Ug$. We introduce the following multi-index notation: let $\mathbf{j}=(j_1,j_2,\ldots, j_{l})\in  \bigcup_{i=1}^{\infty} \lbrace 1,\ldots, d\rbrace^i$, and define $|\mathbf{j}|=l$ (we call $|\mathbf{j}|$ the \emph{order} of $\mathbf{j}$). For a multi-index $\mathbf{j}$, we define $X_{\mathbf{j}}=X_{j_1}X_{j_2}\ldots X_{j_l}$, and $U_{\mathbf{j}} =U_{j_1}U_{j_2}\ldots U_{j_l}$. We also let $\bI_k$ and $\bI_{\leq k}$ denote the sets of multi-indices of order $k$, and order less than or equal to $k$, respectively. We also assume that $\bI_{\leq k}$ includes $\bI_0$, the set of multi-indices of order zero (i.e.\ if $\bk\in\bI_0$, then $d\pi(U_{\bk})\vecv=\vecv$ and $X_{\bk}\chi=\chi$). Note that $\bI_0$ is in fact a singleton set, consisting only of the ``empty string'', which we denote $\mathbf{0}$. We also let $\bI=\bigcup_{k=0}^{\infty}\bI_k$ be the set of all multi-indices.
 
This multi-index notation can be used to explicitly express the Sobolev norm $\|\cdot\|_{W^{k,p}(U^+)}$; for $\chi\in C_c^{\infty}(U^+)$,
\begin{equation*}
\|\chi\|_{W^{k,p}(U^+)}=\sum_{\mathbf{\bk}\in\bI_{\leq k}} \| X_{\mathbf{k}}\chi \|_{L^p(U^+)}.
\end{equation*}
We let $n_0$ be such that for all $j$, $U_j\in\scrU^{n_0}(\mathfrak{g}_{\CC})$ (without loss of generality, we may take $n_0$ to be the same integer as in Corollary \ref{fbd}). From \eqref{MAINODE}, we have that for any multi-index $\mathbf{k}$, 
\begin{equation}\label{multiindexdiff}
\left( \prod_{i=1}^W (\sfrac{d}{dt}-\lambda_i)\right)\scrI_{d\pi(U_{\bk})\vecv}^{X_{\bk}\chi}(t)=-\sum_{j=1}^d e^{\alpha_jt}\scrI_{d\pi(U_jU_{\bk})\vecv}^{X_jX_{\bk}\chi}(t),
\end{equation}
and, moreover, the following bound holds:
\begin{equation}\label{RHDBND}
\left\|-\sum_{j=1}^d e^{\alpha_jt}\scrI_{d\pi(U_jU_{\bk})\vecv}^{X_jX_{\bk}\chi}(t)\right\| \ll \|\chi\|_{W^{|\bk|+1,1}(U^+)}\|\vecv\|_{\scrS^{|\bk|n_0+1}(\scrH)} e^{\left(\min_j\alpha_j\right) t}.
\end{equation}
We may now state the main result of this section:

\begin{prop}\label{BurgerProp2}
Let $\alpha\in (0,\min_j \alpha_j]$, and define
\begin{equation*}
k_0=\begin{cases}
1\qquad &\mathrm{if\,\,}\alpha \geq 2\eta \\
2\qquad &\mathrm{if\,\,}\eta<\alpha < 2\eta \\
\lfloor \sfrac{2\eta}{\alpha}\rfloor\qquad &\mathrm{if\,\,}\alpha\leq\eta.  
\end{cases}
\end{equation*}
There then exist, for given $(\pi,\scrH)$ and $g_{\RR}$, functions $\scrC_{\bk}:\RR_{\leq0}\times \RR_{\leq 0}\rightarrow\CC$ ($\,\bk\in\bI_{k_0}$) and $\scrD_{\bj,i}:\RR_{\leq0}\rightarrow\CC$ ($\,\bj\in \bI_{\leq k_0-1}$ and $i\in\lbrace 0,\ldots, W\rbrace $) such that for all $\vecv\in\scrH^{\infty}$, $\chi\in C_c^{\infty}(U^+)$, and $t\leq0$:
\begin{equation}\label{MAINBURGERIDENT}
\scrI_{\vecv}^{\chi}(t)=\sum_{\bk\in\bI_{k_0}}\int_{-\infty}^0 \scrC_{\bk}(t,s)\scrI_{d\pi(U_{\bk})\vecv}^{X_{\bk}\chi}(s)\,ds+\sum_{\bj\in\bI_{\leq k_0-1}}\sum_{i=0}^{W} \scrD_{\bj,i}(t)\scrI_{d\pi(Y^{i}U_{\bj})\vecv}^{X_{\bj}\chi}(0).
\end{equation}
Furthermore, the following hold:\vspace{2pt}
\begin{enumerate}
\item $|\scrC_{\bk}(t,s)|\ll(1+|t|^{W})e^{\eta t+\frac{\alpha}{5}s}$, where the implied constant depends only on $G$, $\eta$, and $\alpha$.
\item There exists $n_0\in\NN$, depending only on $G$, such that for any $m\in\NN$,
\begin{equation*}
|\scrD_{\bj,i}(t)|\ll(1+|t|^{W}) e^{\eta t}\inf_{\vecw\in\scrH^{\infty}\setminus\lbrace \mathbf{0} \rbrace} \frac{\|\vecw\|_{\scrS^{n_0+m}(\scrH)}}{\|\vecw\|_{\scrS^{m}(\scrH)}},
\end{equation*}
where the implied constant depends only on $G$, $g_{\RR}$, $\eta$, $\alpha$, and $m$.
\end{enumerate}
\end{prop}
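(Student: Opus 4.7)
The plan is to iterate Proposition \ref{BurgerProp1} exactly $k_0$ times: at each stage we take every ``inner'' quantity $\scrI^{X_\bk\chi}_{d\pi(U_\bk)\vecv}$ appearing in the integrand and replace it by its own integral representation, obtained by applying Proposition \ref{BurgerProp1} to its ODE \eqref{multiindexdiff}. After $k_0$ substitutions the innermost averages carry multi-indices of order exactly $k_0$, producing the first sum in \eqref{MAINBURGERIDENT}, while the initial-condition terms $F_{i}\,\scrI_{d\pi(Y^{i}U_\bj)\vecv}^{X_\bj\chi}(0)$ collected along the way produce the second sum.

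Before iterating I first establish the a priori decay
\begin{equation*}
\|\scrI^{X_\bk\chi}_{d\pi(U_\bk)\vecv}(t)\|\ll e^{(\eta-\epsilon')t},\qquad t\leq 0,
\end{equation*}
for every $\bk\in\bI$ and every $\epsilon'>0$. This is obtained by bootstrapping: the trivial bound $\|\scrI^{X_\bk\chi}_{d\pi(U_\bk)\vecv}\|\ll 1$ fed into \eqref{multiindexdiff} gives $\|\psi_\bk(t)\|\ll e^{\alpha t}$, and then Corollary \ref{fbd} returns an improved exponential rate; each additional pass through \eqref{multiindexdiff} followed by Corollary \ref{fbd} improves the rate by $\alpha$ until it saturates at $\eta$. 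Crucially, this guarantees that in every driving term $\psi_{\bk_{i}}(t)=-\sum_{j}e^{\alpha_{j}t}\scrI^{X_{j}X_{\bk_{i}}\chi}_{d\pi(U_{j}U_{\bk_{i}})\vecv}(t)$ appearing at level $i$ of the main iteration, the decay rate $\gamma_{i}$ may be taken equal to $\alpha+\eta-\epsilon'$, which is strictly greater than $\eta$; hence the choice $\beta_{i}=\eta$ is always admissible in Proposition \ref{BurgerProp1}.

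At level $i$ I choose $\beta_{i}$ as follows. When $\alpha>\eta$ I take $\beta_{i}=\eta$ throughout. When $\alpha\leq\eta$ I take $\beta_{i}=\eta-i\delta$ with $\delta:=(\eta-\tfrac{4\alpha}{5})/(k_0-1)$, so that $\beta_{0}=\eta$ and $\beta_{k_0-1}=\tfrac{4\alpha}{5}$. The hypothesis $k_{0}=\lfloor 2\eta/\alpha\rfloor$ is used precisely to ensure $\delta<\alpha$: this key numerical inequality makes every intermediate exponent $\beta_{i}-\beta_{i-1}+\alpha_{j_{i}}$ strictly positive, so that each inner integration $\int_{-\infty}^{0}e^{(\beta_{i}-\beta_{i-1}+\alpha_{j_{i}})t_{i}}\,dt_{i}$ yields a finite constant. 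Combining Proposition \ref{BurgerProp1}(ii) at each of the $k_{0}$ levels and performing the $k_{0}-1$ inner integrations, the telescoped kernel satisfies
\begin{equation*}
|\scrC_{\bk}(t,s)|\ll (1+|t|^{W})(1+|s|^{W})e^{\eta t+(\alpha_{j_{k_{0}}}-\beta_{k_{0}-1})s};
\end{equation*}
since $\alpha_{j_{k_{0}}}\geq\alpha$ and $\beta_{k_{0}-1}\leq\tfrac{4\alpha}{5}$, the $s$-exponent is at least $\tfrac{\alpha}{5}$, and the gap of $\tfrac{3\alpha}{5}$ between this and the target exponent $\tfrac{\alpha}{5}$ easily absorbs the polynomial factor $(1+|s|^{W})$ on $s\leq 0$, producing (1). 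For the $\scrD_{\bj,i}(t)$ I apply Proposition \ref{BurgerProp1}(iii) at every level and bound the resulting $|\underline{\lambda}|_{\infty}^{W}$ factors by a Sobolev quotient exactly as in \eqref{lambdaSob}, yielding (2).

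The main obstacle is the combinatorial bookkeeping of the $k_{0}$-fold iteration. One has to verify simultaneously, at every level $i$, that the chosen $\beta_{i}$ is admissible in Proposition \ref{BurgerProp1} (that is, $0<\beta_{i}<\gamma_{i}$ and $\beta_{i}\leq\eta$), that all $k_{0}-1$ nested integrations converge (this is where the lower bound on $k_{0}$ enters), and that the telescoped kernel respects the target bound $e^{\eta t+(\alpha/5)s}$ after polynomial growth is absorbed. The three-regime definition of $k_{0}$ in the statement is precisely the minimal amount of iteration allowing these requirements to be met simultaneously for all $\alpha\in(0,\min_{j}\alpha_{j}]$.
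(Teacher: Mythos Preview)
Your overall strategy---iterate Proposition~\ref{BurgerProp1} exactly $k_0$ times, collect the inner integrals into the $\scrC_{\bk}$ and the initial-value terms into the $\scrD_{\bj,i}$---is precisely the paper's. The one organizational difference is that the paper runs a single backward induction (from order $k_0-1$ down to $0$), deriving at each step \emph{both} the integral formula and the decay bound needed to justify the next application of Proposition~\ref{BurgerProp1}; you instead decouple these, first bootstrapping every $\scrI^{X_{\bk}\chi}_{d\pi(U_{\bk})\vecv}$ to rate $\eta-\epsilon'$ (at the cost of touching multi-indices of order beyond $k_0$ during the bootstrap) and only then telescoping. Both organisations are valid, and the resulting kernels depend only on the sequence $\beta_0,\dots,\beta_{k_0-1}$, not on the $\gamma_i$ used to verify hypotheses.

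There is, however, a concrete numerical slip in your choice of the innermost parameter. With $\beta_{k_0-1}=\tfrac{4\alpha}{5}$ in the case $\alpha\le\eta$, the $s$-exponent $\alpha_{j_{k_0}}-\beta_{k_0-1}$ can equal \emph{exactly} $\tfrac{\alpha}{5}$ (namely when $\alpha_{j_{k_0}}=\alpha$, which does occur), leaving no room to absorb the polynomial factor $|s|^{m_0}$ coming from Proposition~\ref{BurgerProp1}(ii); your asserted ``gap of $\tfrac{3\alpha}{5}$'' is not there. Similarly, in the range $\eta<\alpha<\tfrac{5\eta}{4}$ your choice $\beta_{k_0-1}=\eta$ gives $s$-exponent $\alpha-\eta<\tfrac{\alpha}{5}$, so bound~(1) fails outright. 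The fix is easy: take $\beta_{k_0-1}$ a bit smaller, for instance $\tfrac{\alpha}{2}$ (this is what the paper does), and set $\delta=(\eta-\tfrac{\alpha}{2})/(k_0-1)$. One then checks, using $k_0=\lfloor 2\eta/\alpha\rfloor\ge 2$, that $\delta<\alpha$ still holds, while the $s$-exponent is now $\ge\tfrac{\alpha}{2}$, giving a genuine margin over $\tfrac{\alpha}{5}$ to absorb the polynomial. With this adjustment your argument goes through.
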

\begin{proof}
By \eqref{MAINODE}, and the fact that $\|-\sum_{j=1}^d e^{\alpha_jt}\scrI_{d\pi(U_j)\vecv}^{X_j\chi}(t)\|\leq C e^{\alpha t}$ for some $C>0$ and all $t\leq 0$, if $\alpha\geq 2\eta$, we may apply Proposition \ref{BurgerProp1} with $\psi(t)=\sum_{j=1}^d -e^{\alpha_jt}\scrI_{d\pi(U_j)\vecv}^{X_j\chi}(t)$, $\gamma=\alpha$, and $\beta_{\gamma}=\eta$. The bounds in Proposition \ref{BurgerProp1} \textit{ii)} and \textit{iii)}, and \eqref{lambdaSob} then suffice to prove the proposition; in particular, \eqref{lambdaSob} provides the bound $(1+ |\underline{\lambda}|_{\infty}^W)\|\vecw\|_{\scrS^m(\scrH)} \ll \|\vecw\|_{\scrS^{m+n_0}(\scrH)}$ (for all $m\in\NN$ and $\vecw\in\scrH^{\infty}$), which, together with  Proposition \ref{BurgerProp1} \textit{iii)}, proves \textit{(2)}.

If $\eta < \alpha < 2\eta$, we will apply Proposition \ref{BurgerProp1} twice: by \eqref{multiindexdiff} and \eqref{RHDBND}, Proposition \ref{BurgerProp1} may be applied to $\scrI_{d\pi(U_j)\vecv}^{X_j\chi}(t)$ (for each $j=1,\ldots,d$) with $\psi(t)=-\sum_{k=1}^d e^{\alpha_k t}\scrI_{d\pi(U_kU_{j})\vecv}^{X_kX_{j}\chi}(t)$, $\gamma=\alpha$, and $\beta_{\gamma}=\eta-\frac{\alpha}{4}$. This gives
\begin{equation*}
\scrI_{d\pi(U_j)\vecv}^{X_j\chi}(t)\!=\!\int_{-\infty}^0 \!\!F(\underline{\lambda},\eta-\sfrac{\alpha}{4},t,s) \!\left(-\sum_{k=1}^d e^{\alpha_ks}\scrI_{d\pi(U_k U_j)\vecv}^{X_k X_j\chi}(s)\!\right)\!ds +\sum_{i=0}^{W} F_i(\underline{\lambda},\eta-\sfrac{\alpha}{4},t)\scrI_{d\pi(Y^{i}U_j)\vecv}^{X_j\chi}(0).
\end{equation*}
By letting $\epsilon=\frac{\alpha}{4}$ in Corollary \ref{fbd}, $\|\scrI_{d\pi(U_j)\vecv}^{X_j\chi}(t)\|\ll e^{(\eta-\frac{\alpha}{2})t}$ for all $t\leq 0$  and $j=1,\ldots,d$. This now gives the bound $ \|-\sum_{j=1}^d e^{\alpha_jt}\scrI_{d\pi(U_j)\vecv}^{X_j\chi}(t)\|\ll e^{(\eta+\frac{\alpha}{2}) t}$, which enables us to apply Proposition \ref{BurgerProp1} to $\scrI_{\vecv}^{\chi}(t)$ with $\psi(t)=-\sum_{j=1}^d e^{\alpha_jt}\scrI_{d\pi(U_j)\vecv}^{X_j\chi}(t)$, $\gamma=\eta+\frac{\alpha}{2}$, and $\beta_{\gamma}=\eta $, hence
\begin{equation*}
\scrI_{\vecv}^{\chi}(t)=\int_{-\infty}^0 F(\underline{\lambda},\eta,t,s) \left(-\sum_{j=1}^d e^{\alpha_js}\scrI_{d\pi(U_j)\vecv}^{X_j\chi}(s)\right)\,ds +\sum_{i=0}^{W} F_{i}(\underline{\lambda},\eta,t)\scrI_{d\pi(Y^{i})\vecv}^{\chi}(0).
\end{equation*}
Our previous expression for $\scrI_{d\pi(U_j)\vecv}^{X_j\chi}$ is now substituted into this, giving
\begin{align*}
\scrI_{\vecv}^{\chi}(t)= \sum_{j=1}^d&\sum_{k=1}^d \int_{-\infty}^0 \left( \int_{-\infty}^0 F(\underline{\lambda},\eta,t,r) F(\underline{\lambda},\eta-\sfrac{\alpha}{4},r,s)e^{\alpha_j r+\alpha_k s}\,dr\right) \scrI_{d\pi(U_kU_j)\vecv}^{X_kX_j\chi}(s)\,ds\\-&\sum_{j=1}^d \sum_{i=0}^{W} \left(\int_{-\infty}^0 F(\underline{\lambda},\eta,t,s) F_{i}(\underline{\lambda},\eta-\sfrac{\alpha}{4},s)e^{\alpha_j s}\,ds\right)\scrI_{d\pi(Y^{i} U_j)\vecv}^{X_j\chi}(0)\\&\qquad +\sum_{i=0}^{W} F_{i}(\underline{\lambda},\eta,t)\scrI_{d\pi(Y^{i})\vecv}^{\chi}(0).
\end{align*}
As previously (the case $\alpha\geq 2\eta$), the bounds from Proposition \ref{BurgerProp1} and \eqref{lambdaSob} are now used to bound the terms $\int_{-\infty}^0 F(\underline{\lambda},\eta,t,r)  F(\underline{\lambda},\eta-\frac{\alpha}{4},r,s)\,e^{\alpha_j r+\alpha_k s}\,dr$, $\int_{-\infty}^0 F(\underline{\lambda},\eta,t,s) F_{i}(\underline{\lambda},\eta-\frac{\alpha}{4},s)e^{\alpha_j s}\,ds$, and $F_{i}(\underline{\lambda},\eta,t)$ as in the statement of the proposition. 

In the case $\alpha\leq \eta$, a similar argument is used, though now involving higher order multi-indices and backwards induction on the order of these; recall that $k_0=\lfloor \frac{2\eta}{\alpha}\rfloor$, and thus $k_0\leq \frac{2\eta}{\alpha}<k_0+1$. We now define
\begin{equation*}
\epsilon=\frac{\alpha (k_0+1)-2\eta}{4(k_0-1)}.
\end{equation*}
Note that $\epsilon\in(0,\frac{\alpha}{4}]$, and $\eta-\alpha < (k_0-1)(\frac{\alpha}{2}-\epsilon)$. For $0\leq l\leq k_0-1$, define $\delta_{l}= l(\frac{\alpha}{2}-\epsilon)$. By \eqref{multiindexdiff}, we have that for any multi-index $\mathbf{k}'$ of order $k_0-1$, 
\begin{equation*}
\left( \prod_{i=1}^W (\sfrac{d}{dt}-\lambda_i)\right)\scrI_{d\pi(U_{\bk'})\vecv}^{X_{\bk'}\chi}(t)=-\sum_{j=1}^d e^{\alpha_jt}\scrI_{d\pi(U_jU_{\bk'})\vecv}^{X_jX_{\bk'}\chi}(t),
\end{equation*}
and by \eqref{RHDBND}, we have
\begin{equation*}
\left\|-\sum_{j=1}^d e^{\alpha_jt}\scrI_{d\pi(U_jU_{\bk'})\vecv}^{X_jX_{\bk'}\chi}(t)\right\| \ll_d \|\chi\|_{W^{k_0,1}(U^+)}\|\vecv\|_{\scrS^{(k_0-1)n_0+1}(\scrH)} e^{\alpha t}.
\end{equation*}
Proposition \ref{BurgerProp1} may thus be applied to $\scrI_{d\pi(U_{\bk'})\vecv}^{X_{\bk'}\chi}(t)$ with $\psi(t)=-\sum_{j=1}^d e^{\alpha_jt}\scrI_{d\pi(U_jU_{\bk'})\vecv}^{X_jX_{\bk'}\chi}(t)$, $\gamma=\alpha$, and $\beta_{\gamma}=\frac{\alpha}{2}$, giving
\begin{align*}
\scrI_{d\pi(U_{\bk'})\vecv}^{X_{\bk'}\chi}(t)=\sum_{j=1}^d \int_{-\infty}^0 -F(\underline{\lambda},\sfrac{\alpha}{2},t,s) e^{\alpha_js}\scrI_{d\pi(U_jU_{\bk'})\vecv}^{X_jX_{\bk'}\chi}(s)\,ds +\sum_{i=0}^{W} F_{i}(\underline{\lambda},\sfrac{\alpha}{2},t)\scrI_{d\pi(Y^{i}U_{\bk'})\vecv}^{X_{\bk'}\chi}(0).
\end{align*}
By Proposition \ref{BurgerProp1} \textit{ii)}, $|F(\underline{\lambda},\sfrac{\alpha}{2},t,s)e^{\alpha_js}|\ll e^{\delta_{1}t+(\frac{\alpha}{2}-\epsilon)s}$, and by Proposition \ref{BurgerProp1} \textit{iii)} and \eqref{lambdaSob}, $|F_{i}(\underline{\lambda},\frac{\alpha}{2},t)|\ll e^{\delta_{1} t}\inf_{\vecw\in\scrH^{\infty}\setminus\lbrace \mathbf{0}\rbrace}\frac{\|\vecw\|_{\scrS^{n_0+m}(\scrH)}}{\|\vecw\|_{\scrS^m(\scrH)}}$ (for any $m\in\NN$). We now use induction: assume that for any multi-index $\mathbf{i}$ of order $k_0-l$ (for some $l\in\lbrace 1,\ldots,k_0-1\rbrace$), there exist sets of $\CC$-valued functions $\lbrace\scrA_{\bi,\bj}\rbrace$ and $\lbrace\scrB_{\mathbf{i},\mathbf{l},i}\rbrace$ such that
\begin{align}\label{INDUCTIONHYP}
\scrI_{d\pi(U_{\mathbf{i}})\vecv}^{X_{\mathbf{i}}\chi}(t)=\sum_{\bj\in \bI_{l}} \int_{-\infty}^0 \scrA_{\bi,\bj}(t,s)\scrI_{d\pi(U_{\mathbf{j}}U_{\mathbf{i}})\vecv}^{X_{\mathbf{j}}X_{\mathbf{i}}\chi}(s)\,ds+\sum_{\bl\in\bI_{\leq l-1}}\sum_{i=0}^W \scrB_{\mathbf{i},\mathbf{l},i}(t)\scrI_{d\pi(Y^{i}U_{\bl}U_{\bi})\vecv}^{X_{\bl}X_{\bi}\chi}(0),
\end{align}
where 

\textit{i)} $|\scrA_{\bi,\bj}(t,s)|\ll_{G,\eta,\alpha}  e^{\delta_{l}t+(\frac{\alpha}{2}-\epsilon)s}$,

 \textit{ii)} $|\scrB_{\mathbf{i},\mathbf{l},i}(t)|\ll_{G,g_{\RR},\eta,\alpha}e^{\delta_{l}t}\inf_{\vecw\in\scrH^{\infty}\setminus\lbrace \mathbf{0}\rbrace} \frac{\|\vecw\|_{\scrS^m(\scrH)}}{\|\vecw\|_{\scrS^{n_0+m}(\scrH)}}\qquad \forall m\in\NN$. 

Under these assumptions, using the bound $\|\scrI_{d\pi(U_{\bj}U_{\bi})\vecv}^{X_{\bj}X_{\bi}\chi}(s)\|\ll 1$ in \eqref{INDUCTIONHYP} immediately gives 
\begin{equation}\label{Ibibd}
\| \scrI_{d\pi(U_{\mathbf{i}})\vecv}^{X_{\mathbf{i}}\chi}(t)\|\ll e^{\delta_{l} t}.
\end{equation}
We now select an arbitrary multi-index $\bk$ of order $k_0-(l+1)$. Since $\scrI_{d\pi(U_{\mathbf{k}})\vecv}^{X_{\mathbf{k}}\chi}(t)$ satisfies the differential equation \eqref{multiindexdiff}, the right-hand side of which consists of terms of the form $e^{\alpha_jt}\scrI_{d\pi(U_jU_{\bk})\vecv}^{X_jX_{\bk}\chi}(t)$, \eqref{Ibibd} may be used to bound the right-hand side of \eqref{multiindexdiff} by $\ll e^{(\delta_l+\alpha)t}$. This enables us to apply Proposition \ref{BurgerProp1} to $\scrI_{d\pi(U_{\bk})\vecv}^{X_{\bk}\chi}(t)$ with $\psi(t)=-\sum_{j=1}^de^{\alpha_jt}\scrI_{d\pi(U_jU_{\bk})\vecv}^{X_jX_{\bk}\chi}(t)$, $\gamma=\delta_{l}+\alpha$, and $\beta_{\gamma}=\delta_{l}+\frac{\alpha}{2}$ (indeed, note that $\delta_l+\frac{\alpha}{2}\leq (k_0-1)(\frac{\alpha}{2}-\epsilon)+\frac{\alpha}{2}<k_0\frac{\alpha}{2}\leq \eta$), giving
\begin{align}\label{bkI}
\scrI_{d\pi(U_{\bk})\vecv}^{X_{\bk}\chi}(t)=-\sum_{j=1}^d \int_{-\infty}^0 &F(\underline{\lambda},\delta_{l}+\sfrac{\alpha}{2},t,s) e^{\alpha_js}\scrI_{d\pi(U_jU_{\bk})\vecv}^{X_jX_{\bk}\chi}(s)\,ds\\\notag & +\sum_{i=0}^{W} F_{i}(\underline{\lambda},\delta_{l}+\sfrac{\alpha}{2},t)\scrI_{d\pi(Y^{i}U_{\bk})\vecv}^{X_{\bk}\chi}(0). 
\end{align}
We may now apply Proposition \ref{BurgerProp1} \textit{iii)} and \eqref{lambdaSob} to get that for any $m\in\NN$ and $\vecw\in\scrH^{\infty}$, $|F_{i}(\underline{\lambda},\delta_{l}+\frac{\alpha}{2},t)| \ll  e^{\delta_{l+1} t} \inf_{\vecw\in\scrH^{\infty}\setminus\lbrace 0 \rbrace}\frac{\|\vecw\|_{\scrS^{n_0+m}(\scrH)}}{\|\vecw\|_{\scrS^{m}(\scrH)}} $ (since $\delta_{l+1}=\delta_{l}+\frac{\alpha}{2}-\epsilon$). Let $\bj_j$ be the multi-index of order $k_0-l$ corresponding to $X_jX_{\bk}$ and $U_jU_{\bk}$. The induction hypothesis is then used once again: for each $\bj_j$, we have
\begin{align*}
\scrI_{d\pi(U_{\bj_j})\vecv}^{X_{\bj_j}\chi}(t)=\sum_{\bm\in \bI_{l}} \int_{-\infty}^0 \scrA_{\bj_j,\bm}(t,s)\scrI_{d\pi(U_{\bm}U_{\bj_j})\vecv}^{X_{\bm}X_{\bj_j}\chi}(s)\,ds+\sum_{\bl\in\bI_{\leq l-1}}\sum_{i=0}^W \scrB_{\mathbf{j}_j,\mathbf{l},i}(t)\scrI_{d\pi(Y^{i}U_{\bl}U_{\bj_j})\vecv}^{X_{\bl}X_{\bj_j}\chi}(0),
\end{align*}
where the sets of functions $\lbrace\scrA_{\bj_j,\bm}\rbrace$ and $\lbrace\scrB_{\mathbf{j}_j,\mathbf{l},i}\rbrace$ satisfy the bounds of the induction hypothesis. This is entered into \eqref{bkI}, giving
\begin{align*}
\scrI_{d\pi(U_{\bk})\vecv}^{X_{\bk}\chi}(t)& =\sum_{j=1}^d \sum_{\bm\in \bI_{l}}  \int_{-\infty}^0\left( \int_{-\infty}^0  -F(\underline{\lambda},\delta_{l}+\sfrac{\alpha}{2},t,r) e^{\alpha_jr} \scrA_{\bj_j,\bm}(r,s)\,dr\right)\scrI_{d\pi(U_{\bm}U_jU_{\bk})\vecv}^{X_{\bm}X_jX_{\bk}\chi}(s)\,ds\\&\quad+ \sum_{j=1}^d \sum_{\bl\in\bI_{\leq l-1}}\sum_{i=0}^W \left(\int_{-\infty}^0 -F(\underline{\lambda},\delta_{l}+\sfrac{\alpha}{2},t,s) e^{\alpha_js} \scrB_{\mathbf{j}_j,\mathbf{l},i}(s)\,ds\right)\scrI_{d\pi(Y^{i}U_{\bl}U_jU_{\bk})\vecv}^{X_{\bl}X_jX_{\bk}\chi}(0)\\&\qquad+\sum_{i=0}^{W} F_{i}(\underline{\lambda},\delta_{l}+\sfrac{\alpha}{2},t)\scrI_{d\pi(Y^{i}U_{\bk})\vecv}^{X_{\bk}\chi}(0).  
\end{align*}
Collecting terms and using the induction hypothesis, the bounds from Proposition \ref{BurgerProp1}, and \eqref{lambdaSob} complete the induction: \eqref{INDUCTIONHYP}, \textit{i}, and \textit{ii)} are thus valid for any $\bk$ of order $k_0-l$, for all $l$, $1\leq l\leq k_0$. In particular, for $l=k_0$, \eqref{INDUCTIONHYP}, \textit{i)} and \textit{ii)} give bounds of the same form as those in Proposition \ref{BurgerProp2}, though not as sharp. In order to obtain the desired bounds, we carry out the last step (viz., $l=k_0-1$) of the previous induction with the single modification that we use $\beta_{\gamma}=\eta$ in place of $\beta_{\gamma}=\delta_{k_0-1}+\frac{\alpha}{2}$; we thus apply Proposition \ref{BurgerProp1} to $\scrI_{\vecv}^{\chi}(t)$ with $\psi(t)=-\sum_{j=1}^de^{\alpha_j t}\scrI_{d\pi(U_j)\vecv}^{X_j\chi}(t)$, $\gamma=\delta_{k_0-1}+\alpha$ and $\beta_{\gamma}=\eta$. The assumptions of Proposition \ref{BurgerProp1} are still fulfilled; indeed, the only condition that needs a new verification is $\beta_{\gamma}<\eta$, i.e.\ $\eta< \delta_{k_0-1}+\alpha$, and this is an easy consequence of our choice of $\epsilon$. Arguing as before, we obtain
\begin{align*}
\scrI_{\vecv}^{\chi}(t)& =\sum_{j=1}^d \sum_{\bm\in \bI_{k_0-1}}  \int_{-\infty}^0\left( \int_{-\infty}^0  -F(\underline{\lambda},\eta,t,r) e^{\alpha_jr} \scrA_{\bj_j,\bm}(r,s)\,dr\right)\scrI_{d\pi(U_{\bm}U_j)\vecv}^{X_{\bm}X_j\chi}(s)\,ds\\&\quad+ \sum_{j=1}^d \sum_{\bl\in\bI_{\leq k_0-2}}\sum_{i=0}^W \left(\int_{-\infty}^0 -F(\underline{\lambda},\eta,t,s) e^{\alpha_js} \scrB_{\mathbf{j}_j,\mathbf{l},i}(s)\,ds\right)\scrI_{d\pi(Y^{i}U_{\bl}U_j)\vecv}^{X_{\bl}X_j\chi}(0)\\&\qquad+\sum_{i=0}^{W} F_{i}(\underline{\lambda},\eta,t)\scrI_{d\pi(Y^{i})\vecv}^{\chi}(0),
\end{align*}
where $\mathbf{j}_j=(j)$, $j=1,\ldots,d$. Now arguing in the same manner as previously gives the bounds stated in the proposition. 
\end{proof}
\begin{remark}
The functions $\scrA_{\bi,\bj}$, $\scrB_{\bi,\bl,i}$ in the proof can be made completely explicit; they consist of integrals of products of the functions $F$, $F_i$ from Proposition \ref{BurgerProp1}. The uniformity of the bounds in Proposition \ref{BurgerProp2} with respect to the representation $(\pi,\scrH)$  allows us to use the identity \eqref{MAINBURGERIDENT} in all the representations in the decomposition of $(\rho,V)$ into irreducibles. This will be important in the proof of Theorem \ref{maintheorem1}, where we essentially ``integrate'' \eqref{MAINBURGERIDENT} over the direct integral decomposition of $(\rho,V)$. Furthermore, the dependency on $g_{\RR}$ in \textit{(2)} can be quantified, namely it comes from the bound \eqref{lambdaSob}, i.e.\ $1+|\underline{\lambda}|^W\ll_{G,g_{\RR},m} \inf_{\vecw\in\scrH^{\infty}\setminus\lbrace \mathbf{0} \rbrace} \frac{\|\vecw\|_{\scrS^{n_0+m}(\scrH)}}{\|\vecw\|_{\scrS^{m}(\scrH)}}$ (the bound in \textit{(2)} is stated in this manner to ensure that the implied constant does not depend on $(\pi,\scrH)$). This will be of importance when proving Theorem \ref{maintheorem2} (in Section \ref{UNIFORMSEC}), where we will let $g_{\RR}$ vary within the positive Weyl chamber $\overline{A^+}$.
\end{remark}
\section{Sobolev Inequalities}\label{SOBSEC}
While the results of Section \ref{BURGERSEC} allow us to draw various conclusions regarding convergence in $L^2(\GaG)$ of translates of averages of functions, in order to make pointwise statements for non-uniform $\Gamma$ we use an automorphic Sobolev inequality from \cite[Appendix B]{Bern}. Before introducing this inequality, we recall some facts regarding the reduction theory of $\GaG$.
\subsection{Reduction Theory}\label{ReducSec}
Most of the material of this section is drawn from \cite[Chapter 2]{OsborneWarner}. Recall that we have assumed that $\Gamma$ fulfils the assumption of Langlands (cf.\ \cite[Chapter 2, p. 16]{Langlands}, also \cite[pp. 62-63]{OsborneWarner}). Before stating the main result regarding the reduction theory, we introduce the concept of \emph{split components}. Let $P=NAM$ be a parabolic subgroup with corresponding Lie algebra decomposition $\fn\oplus\fa\oplus\fm$. Given a subalgebra $\fa'\subset\fa$, for $\lambda\in \fa'^*$ we define
\begin{equation*}
\fn_{\lambda}=\lbrace X\in \fn\,:\, \ad_H(X)=\lambda(H)X\quad\forall H\in\fa'\rbrace.
\end{equation*}
Letting $\Sigma^+(\fg,\fa')=\lbrace \lambda|_{\fa'}\,:\, \lambda\in\Sigma^+(\fg,\fa)\rbrace$, we then have that
\begin{equation*}
\fn=\bigoplus_{\lambda\in \Sigma^+(\fg,\fa')} \fn_\lambda.
\end{equation*}
Denoting the orthogonal complement of $\fa'$ in $\fa$ with respect to the Killing form by $\fa'^{\perp}$, $\fa'$ and its corresponding analytic subgroup $\scrA$ are both called \emph{split components of $P$} if
\begin{equation*}
\tr(\ad H|_{\fn_{\lambda}})=0\qquad\forall H\in\fa'^{\perp},\,\lambda\in\Sigma^+(\fg,\fa'),
\end{equation*}
cf.\ \cite[pp. 3-5]{Langlands} and \cite[pp. 30-32]{OsborneWarner}. One can show that if $\fa'$ is a split component, then $0\not\in\Sigma^+(\fg,\fa')$, as well as
\begin{equation*}
\rho_{\fa}|_{\fa'}=\rho_{\fa'}:=\sfrac{1}{2}\sum_{\lambda\in \Sigma^+(\fg,\fa')} \dim\fn_{\lambda}\lambda.
\end{equation*}
Moreover, there is a unique set $\Sigma_0^+(\fg,\fa')\subset\Sigma^+(\fg,\fa')$  of simple roots (cf.\ \cite[p. 8]{Langlands} and \cite[p. 34]{OsborneWarner}) for $\fa'$; of importance to us are the facts that $\Sigma_0^+(\fg,\fa')$ is a basis for $\fa'^*$, and every element of $\Sigma^+(\fg,\fa')$ may be written as a linear combination  $\sum_{\lambda\in \Sigma_0^+(\fg,\fa')} m_{\lambda}\lambda$, with all $m_{\lambda}\in \ZZ_{\geq 0}$.
 
For a split component $\fa'$, let $\scrA^{\perp}$ be the analytic subgroup of $G$ corresponding to the subalgebra $\fa'^{\perp}$. We now define a subgroup $S\subset P$ by $S=NM\scrA^{\perp}$. The pair $(P,S)$ is called a \emph{split parabolic pair}, and the triple $(P,S;\scrA)$ a \emph{split parabolic pair with split component $\scrA$}. We now define a few important subsets: for an arbitrary real number $\tau$, let
\begin{align*}
\scrA_{\tau}&=\lbrace \exp(H)\,:\, H\in\fa',\, \lambda(H)\geq \tau\quad\forall\lambda\in\Sigma_0^+(\fg,\fa')\rbrace\\
\scrA^{\tau}&=\lbrace \exp(H)\,:\, H\in\fa',\, |\lambda(H)|\leq \tau\quad\forall\lambda\in\Sigma_0^+(\fg,\fa')\rbrace.
\end{align*}
For a compact subset $\Omega\subset S$, define 
\begin{equation}\label{Omegatau}
\Omega_{\tau}=\bigcup_{a\in\scrA_{\tau}} a^{-1}\Omega a.
\end{equation}
Observe that $\scrA^{\tau}$ is a compact subset of $\scrA$, and $\Omega_{\tau}$ is a relatively compact subset of $S$. We now define a \emph{Siegel set} $\fG_{\Omega,\tau}$ with respect to $(P,S;\scrA)$ by
\begin{equation*}
\fG_{\Omega,\tau} =\Omega\scrA_{\tau}K.
\end{equation*}
A split parabolic pair $(P,S)$ is said to be $\Gamma$-\emph{percuspidal} if 
\textit{i)} $\Gamma\cap P\subset S$ \textit{ii)} $\Gamma\cap N$ is cocompact in $N$ \textit{iii)} $\Gamma\cap S$ is cocompact in $S$. We can now state the result from reduction theory that will be needed: combining \cite[Theorem 2.11]{OsborneWarner} and the assumption on $\Gamma$ (cf.\ \cite[pp. 62-63]{OsborneWarner}), we have:
\begin{prop}\label{REDUCTIONPROP}
There exist $\tau_0\in\RR$, a standard parabolic split pair with split component $(P,S;\scrA)$, and elements $g_1,\,g_2\,\ldots, g_{\kappa} \in G$ such that $(P_i,S_i;\scrA_i)=(g_iPg_i^{-1},g_iSg_i^{-1};g_i\scrA g_i^{-1})$ are $\Gamma$-percuspidal, and
\begin{equation*}
G=\bigcup_{i=1}^{\kappa} \Gamma\fG_i,
\end{equation*} 
where the $\fG_i= \Omega_i\scrA_{i,\tau_0}K$ are Siegel sets with respect to $(P_i,S_i;\scrA_i)$. Furthermore, the set of all $\gamma \in \Gamma$ such that $\gamma\left(\bigcup_{i=1}^{\kappa} \fG_i \right)\cap \left(\bigcup_{i=1}^{\kappa} \fG_i\right)\neq\emptyset$ is finite.
\end{prop}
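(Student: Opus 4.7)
The plan is to derive this proposition by combining the Langlands-type hypothesis imposed on $\Gamma$ with the general reduction theory of split parabolic pairs developed in \cite[Chapter 2]{OsborneWarner}. No genuinely new argument is needed; the task is to unpack what each ingredient contributes and to check that the inputs of \cite[Theorem 2.11]{OsborneWarner} are fulfilled in our setting.

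First I would invoke the Langlands hypothesis on $\Gamma$ (cf.\ \cite[pp.~62--63]{OsborneWarner}) to conclude that there are only finitely many $\Gamma$-conjugacy classes of $\Gamma$-percuspidal split parabolic pairs $(P', S')$, and that representatives of these classes can be chosen to be $G$-conjugate to a single standard parabolic split pair with split component $(P,S;\scrA)$. Writing the conjugators as $g_1,\ldots,g_{\kappa}\in G$, setting $(P_i,S_i;\scrA_i)=(g_iPg_i^{-1},g_iSg_i^{-1},g_i\scrA g_i^{-1})$ then produces the required finite list. The parameter $\kappa$ is precisely the number of cusps of $\GaG$ and is controlled by the Langlands assumption.

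Second, to each pair I would attach a Siegel set $\fG_i=\Omega_i\scrA_{i,\tau_0}K$ with $\Omega_i\subset S_i$ a compact subset, built from a compact fundamental domain for $\Gamma\cap N_i$ in $N_i$ (cocompact by the percuspidal hypothesis) together with a compact fundamental domain for the Levi-like complement in $S_i$ (again cocompact by the percuspidal hypothesis). A single value of $\tau_0$ can be chosen that works for all $\kappa$ pairs simultaneously, simply by taking the minimum of the individual admissible values. The covering statement $G=\bigcup_{i=1}^{\kappa}\Gamma\fG_i$ is then the existence of a Siegel fundamental set, which is the main conclusion of \cite[Theorem 2.11]{OsborneWarner}; the verification that its hypotheses hold here reduces precisely to the percuspidality of each $(P_i,S_i)$, which we have arranged.

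The final clause, finiteness of the set of $\gamma\in\Gamma$ mapping $\bigcup_i\fG_i$ back onto itself, is the classical Siegel finiteness property, and I expect it to be the main technical obstacle. Its proof in \cite[Theorem 2.11]{OsborneWarner} rests on the observation that if $\gamma\fG_i\cap\fG_j\neq\emptyset$, then the Iwasawa $\scrA$-coordinate of $\gamma$ is constrained to a bounded subset of $\scrA$; combined with discreteness of $\Gamma$ in $G$ and the compactness of the $\Omega_i$, this forces only finitely many such $\gamma$. Rather than reprove this, my plan is to cite \cite[Theorem 2.11]{OsborneWarner} directly, noting only that the passage from the statement there, which treats a single percuspidal pair, to our statement for the finite union $\bigcup_i\fG_i$ is immediate because the list $\lbrace(P_i,S_i;\scrA_i)\rbrace_{i=1}^{\kappa}$ is itself finite.
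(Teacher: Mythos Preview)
Your proposal is correct and matches the paper's approach exactly: the paper does not give an independent proof but simply states that the proposition follows by combining \cite[Theorem 2.11]{OsborneWarner} with the Langlands assumption on $\Gamma$ (cf.\ \cite[pp.~62--63]{OsborneWarner}). Your write-up is a more detailed unpacking of precisely this citation.
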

\begin{remark}
In the case that $\Gamma$ is an arithmetic lattice in an algebraic group, the reduction theory of Borel and Harish-Chandra \cite{BorelHarish} gives more: we may take $P$ as the real points of a minimal $\QQ$-parabolic subgroup of $G_{\QQ}$, $\scrA$ as a maximal $\QQ$-split torus in $A$, and the $g_i\in G_{\QQ}$. If $\rank(G)=1$, then the reduction theory is provided by Garland and Raghunathan \cite{GarlandRaghunathan}. (cf.\ \cite[pp. 14-18]{OsborneWarner}).
\end{remark}
Throughout the remainder of Section \ref{SOBSEC}, we let $P_i$, $S_i$, $\scrA_i$, and $\fG_i$ ($i=1,\ldots,\kappa$) be as in Proposition \ref{REDUCTIONPROP}. We also use the notation $N_i=g_i N g_i^{-1}$, $A_i=g_i A g_i^{-1}$, $M_i=g_i M g_i^{-1}$, $N_i^-=g_i N^- g_i^{-1}$. Finally, the Lie algebra of $A_i$ is denoted $\fa_i$, and the Lie algebra of $\scrA_i$ is denoted $\fa_i'$.
\subsection{Sobolev Inequalities}\label{SOBINEQSEC}
We now follow \cite[Appendix B]{Bern} and state the previously mentioned automorphic Sobolev inequality. For a compact, symmetric neighbourhood $B$ of $e$ in $G$, we define a function $w_B$ on $\GaG$ by letting $w_B(x)$ ($x\in\GaG$) be the operator norm of the mapping from $L^2(\GaG)$ to $L^2(B)$ given by $f(\cdot)\mapsto f(x\cdot)$, i.e.\
\begin{equation*}
w_B(x)=\inf\bigg\lbrace c\in \RR_{>0}\,:\, \bigg(\int_B |f( x g)|^2\,dg\bigg)^{1/2}\leq c\|f\|_{L^2(\GaG)}\quad\forall f\in L^2(\GaG)\bigg\rbrace,
\end{equation*}
or, equivalently,
\begin{equation*}
w_B(x) = \sup_{\substack{f\in L^2(\GaG)\\ f\neq 0}} \frac{\bigg(\int_B |f( x g)|^2\,dg\bigg)^{1/2}}{\|f\|_{L^2(\GaG)}}.
\end{equation*}
From \cite[Proposition B.2]{Bern} we have the following:
\begin{prop}\label{sobbnd}
For $m>\sfrac{\dim(G)}{2}$, we have
\begin{equation*}
|f(x)|\ll_{\Gamma,B} w_B(x)\|f\|_{\scrS^m(\GaG)}\qquad \forall f\in\scrS^m(\GaG),\, x\in\GaG.
\end{equation*} 
\end{prop}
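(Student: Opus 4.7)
The statement is a standard local-to-global Sobolev embedding combined with the very definition of $w_B$. The strategy is to pass from $f$ on $\GaG$ to its right-translate viewed as a function on the compact neighbourhood $B \subset G$, apply a classical Sobolev embedding to evaluate at the identity, and then control each local $L^2$ norm by a global $L^2$ norm via $w_B$.

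First I would fix $f \in C^{\infty}(\GaG) \cap \scrS^m(\GaG)$ (a dense subspace of $\scrS^m(\GaG)$) and, for $x \in \GaG$, introduce the auxiliary smooth function $F_x : B \to \CC$ defined by $F_x(g) = f(xg)$. Since $m > \dim(G)/2$, the classical Sobolev embedding on the compact manifold-with-boundary $B$ (which one can derive either in a chart or directly on $G$ with a bi-invariant Riemannian structure) yields
\begin{equation*}
|F_x(e)| \ll_{B} \|F_x\|_{W^{m,2}(B)},
\end{equation*}
where the Sobolev norm on the right is taken with respect to the basis $\lbrace X_j\rbrace_{j=1,\ldots,\dim\fg}$ of $\fg$ used to define $\|\cdot\|_{\scrS^m(\GaG)}$ (the equivalence between this intrinsic norm and the Euclidean Sobolev norm of the chart contributes only to the implied constant). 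The left-hand side equals $|f(x)|$.

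Next I would compute the right-hand side: for any monomial $U$ in $X_1,\ldots,X_{\dim\fg}$ of order at most $m$, the chain rule combined with the fact that left-invariant derivatives on $G$ correspond to the derived right regular representation gives
\begin{equation*}
(UF_x)(g) = (d\rho(U)f)(xg)\qquad\forall g\in B.
\end{equation*}
Squaring and integrating over $B$, and invoking the very definition of $w_B(x)$ applied to the function $d\rho(U)f \in L^2(\GaG)$, one obtains
\begin{equation*}
\int_B |(UF_x)(g)|^2\,dg = \int_B |(d\rho(U)f)(xg)|^2\,dg \leq w_B(x)^2 \|d\rho(U)f\|_{L^2(\GaG)}^2.
\end{equation*}
Summing over all monomials $U$ of order at most $m$ gives $\|F_x\|_{W^{m,2}(B)}^2 \ll_{B} w_B(x)^2 \|f\|_{\scrS^m(\GaG)}^2$, and combining with the Sobolev embedding yields $|f(x)| \ll_{\Gamma,B} w_B(x) \|f\|_{\scrS^m(\GaG)}$.

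Finally, a density argument extends this pointwise bound from $C^{\infty}(\GaG) \cap \scrS^m(\GaG)$ to all of $\scrS^m(\GaG)$ (with the caveat that equality is understood almost everywhere, after choosing the continuous representative produced by the embedding). The only non-trivial step is the local Sobolev embedding on $B$; all remaining ingredients are definitional. No estimate here is delicate with respect to $x$, since the weight $w_B$ absorbs all information about how $f$ can spike near cusps.
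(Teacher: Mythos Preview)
Your argument is correct and is exactly the standard proof of this type of automorphic Sobolev inequality. The paper does not actually supply a proof here; it simply quotes the result from \cite[Proposition B.2]{Bern}, whose proof proceeds along the same lines you outline (local Sobolev embedding at $e\in B$ plus the defining property of $w_B$ applied to each derivative $d\rho(U)f$).
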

The two properties of $w_{B}$ established in the next lemma will be used at multiple points throughout the remainder of this section:
\begin{lem}$ $\\\vspace*{-13pt}

\begin{enumerate}[i)]
\item
For any compact subset $\scrC$ of $G$ and any $x\in\GaG$, we have 
\begin{equation*}
w_B(xg)\ll_{B,\scrC} w_B(x)\qquad \forall x\in\GaG,\,g\in \scrC.
\end{equation*}
\item For all $g\in G$, $w_B(\Gamma g)^2\leq \#( \Gamma \cap g BB g^{-1})$.
\end{enumerate}
\end{lem}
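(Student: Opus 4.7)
My plan is to prove (ii) first by a direct unfolding argument, and then reduce (i) to (ii) via the unitarity of right translation together with a covering argument.

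For (ii), I would lift $f\in L^2(\GaG)$ to a $\Gamma$-invariant function $\widetilde f$ on $G$ and fix a fundamental domain $F$ for $\Gamma$ in $G$. The substitution $u=gh$ together with an unfolding over $\Gamma$ gives
\[
\int_B|f(\Gamma gh)|^2\,dh \;=\; \int_{gB}|\widetilde f(u)|^2\,du \;=\; \int_F |\widetilde f(v)|^2\,N(v)\,dv,
\]
where $N(v)=\#\{\gamma\in\Gamma:\gamma v\in gB\}$. The heart of the argument is a coset trick: if $\gamma_1 v,\gamma_2 v\in gB$ with $\gamma_iv=gb_i$ and $b_i\in B$, then $\gamma_1\gamma_2^{-1}=g\,b_1b_2^{-1}\,g^{-1}\in gBB^{-1}g^{-1}=gBBg^{-1}$, using $B=B^{-1}$. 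Hence the counted $\gamma$'s form a single right coset of $\Gamma\cap gBBg^{-1}$, giving $N(v)\le\#(\Gamma\cap gBBg^{-1})$ uniformly; dividing by $\|f\|_{L^2(\GaG)}^2=\int_F|\widetilde f|^2\,dv$ then yields (ii).

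For (i), the crucial observation is the identity
\[
w_B(xg)^2 \;=\; w_{gBg^{-1}}(x)^2,
\]
obtained by substituting $h'=ghg^{-1}$ in $\int_B|f(xgh)|^2\,dh$ (valid by unimodularity of $G$), recognising $f(xgh)=(R_gf)(xh')$ for $R_gf(y):=f(yg)$, and using that $R_g$ is unitary on $L^2(\GaG)$. Since $gBg^{-1}\subset\widehat B:=\bigcup_{g\in\scrC}gBg^{-1}$, which is a compact symmetric neighbourhood of $e$, monotonicity of $w$ yields $w_B(xg)\le w_{\widehat B}(x)$. The same argument as in (ii) (which goes through for any compact symmetric set in place of $B$) gives $w_{\widehat B}(x)^2\le\#(\Gamma\cap g_0\widehat B\widehat B g_0^{-1})$; covering $\widehat B\widehat B$ by finitely many translates of $B$ and applying the coset trick in each piece bounds this cardinality by $C(\scrC,B)\cdot\#(\Gamma\cap g_0 BBg_0^{-1})$. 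A matching lower bound $w_B(x)^2\gg_B\#(\Gamma\cap g_0 BBg_0^{-1})$ will then close the argument; I plan to obtain this by testing $w_B(x)^2\ge\int_B|f(xh)|^2\,dh/\|f\|^2$ against $f=\mathbf{1}_{xB_0}$ for a sufficiently small symmetric neighbourhood $B_0\subset B$, so that the translates $(g_0^{-1}\gamma g_0)B_0\cap B$ for $\gamma\in\Gamma\cap g_0 Bg_0^{-1}$ are (essentially) disjoint and each contributes volume $\asymp\vol(B_0)$, while $\|f\|^2=\mu(xB_0)\asymp\vol(B_0)/\#(\Gamma\cap g_0B_0B_0g_0^{-1})$.

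The main obstacle will be producing this matching lower bound: in the cusps the injectivity radius at $x$ shrinks, so $B_0$ must be chosen small enough that $\Gamma\cap g_0B_0B_0g_0^{-1}=\{e\}$, yet the resulting estimate must still recover a fixed positive fraction of $\#(\Gamma\cap g_0 BBg_0^{-1})$. The key technical ingredient is the iterated covering-plus-coset argument of the previous paragraph, which shows that the counts $\#(\Gamma\cap g_0 B_1B_1g_0^{-1})$ for compact symmetric neighbourhoods $B_1$ of $e$ are all comparable up to constants depending only on the $B_1$; this allows one to pass freely between $B$, $\widehat B$ and a small auxiliary $B_0$ while preserving uniformity in $x$.
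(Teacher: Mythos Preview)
Your proof of (ii) is correct and is essentially the paper's argument: unfold over a fundamental domain, bound the multiplicity $N(v)=\#\{\gamma:\gamma v\in gB\}$ pointwise by the coset trick, and take the sup.

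For (i), however, you take a much longer route than necessary. The paper's proof is four lines and does not use (ii) at all: choose finitely many $g_1,\dots,g_r$ with $\scrC B\scrC^{-1}\subset\bigcup_i Bg_i$; then for $g\in\scrC$,
\[
\int_B|f(xgh)|^2\,dh=\int_{gBg^{-1}}|f(xhg)|^2\,dh\le\sum_i\int_{Bg_i}|f(xhg)|^2\,dh=\sum_i\int_B|(\rho(g_ig)f)(xh)|^2\,dh\le r\,w_B(x)^2\|f\|^2,
\]
using only unitarity of $\rho$. Your identity $w_B(xg)=w_{gBg^{-1}}(x)$ and the covering step are both present here, but the paper applies the definition of $w_B(x)$ directly to each piece $Bg_i$ rather than passing through lattice-point counts.

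Your approach instead requires the matching lower bound $w_B(x)^2\gg_B\#(\Gamma\cap g_0BBg_0^{-1})$. This is true (the paper remarks that equality holds in (ii)), but your sketch does not establish it. With the test function $f=\mathbf{1}_{xB_0}$ and $B_0$ \emph{fixed} independently of $x$, the translates $(g_0^{-1}\gamma g_0)B_0$ overlap with multiplicity $M_0:=\#(\Gamma\cap g_0B_0B_0g_0^{-1})$, and your computation yields only $w_B(x)^2\gtrsim\#(\Gamma\cap g_0B'g_0^{-1})/M_0$; since numerator and denominator are comparable by your own coset argument, this collapses to the triviality $w_B(x)\gg1$. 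To salvage it you need the two-sided estimate $\mu(xB_0)\asymp\vol(B_0)/M_0$, whose upper bound (which is what you actually use) is not immediate: the fibres of $g_0B_0\to xB_0$ have size $\asymp M_0$ only away from the boundary of $B_0$, and one must show this accounts for most of the volume. This is doable, but it is exactly the ``more work'' the paper alludes to and then avoids.
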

\begin{proof}
Starting with \textit{i)}, assume that $g\in\scrC$. Let $g_1,\ldots, g_r\in G$ be such that $\scrC B \scrC^{-1} \subset \bigcup_{i=1}^r Bg_i$. We then have that for any $f\in L^2(\GaG)$, $x\in\GaG$, and $g\in \scrC$,
\begin{align*}
\int_B |f( x gh)|^2\,dh=\int_{gBg^{-1}}|f(xhg)|^2\,dh&\leq \sum_{i=1}^r \int_{Bg_i} |f( x hg)|^2\,dh= \sum_{i=1}^r \int_{B} |\big(\rho(g_ig)f\big)( x h)|^2\,dh\\\leq &w^2_B(x)\sum_{i=1}^r \|\rho(g_ig)f\|^2_{L^2(\GaG)}=w^2_B(x)r\|f\|^2_{L^2(\GaG)},
\end{align*}
proving \textit{i)}. For \textit{ii)}, we choose an exact fundamental domain $\scrF$ for $\Gamma$ in $G$, and observe that (using $\mathbf{1}$ to denote characteristic functions)
\begin{align*}
\int_B |f( &\Gamma gh)|^2\,dh=\int_G |f(\Gamma gh)| ^2\mathbf{1}_B(h)\,dh=\int_G |f(\Gamma h)| ^2\mathbf{1}_{gB}(h)\,dh\\=&\sum_{\gamma\in\Gamma}\int_{\gamma\scrF} |f(\Gamma h)| ^2\mathbf{1}_{gB}(h)\,dh=\int_{\scrF} |f(\Gamma h)| ^2\bigg( \sum_{\gamma\in\Gamma}\mathbf{1}_{gB}(\gamma h)\bigg)\,dh\\&\leq \sup_{h\in\scrF}\#\lbrace \gamma\in\Gamma\,:\, \gamma h\in g B\rbrace \|f\|_{L^2(\GaG)}^2=\sup_{h\in G}\#\lbrace \gamma\in\Gamma\,:\, \gamma h\in g B\rbrace \|f\|_{L^2(\GaG)}^2.
\end{align*}
Noting that if $\gamma_0\in\Gamma$, $ \#\lbrace \gamma\in\Gamma\,:\, \gamma h\in g B\rbrace= \#\lbrace \gamma\in\Gamma\,:\, \gamma (\gamma_0h)\in g B\rbrace$, we have
\begin{align*}
=\max_{h\in gB} \#\lbrace \gamma\in\Gamma\,:\, \gamma h\in g B\rbrace  \|f\|_{L^2(\GaG)}^2\leq& \#\lbrace \gamma\in\Gamma\,:\, \gamma gB\cap gB\neq\emptyset\rbrace \|f\|_{L^2(\GaG)}^2\\&=\#(\Gamma\cap gBBg^{-1}) \|f\|_{L^2(\GaG)}^2,
\end{align*}
the last equality holding due to $B$ being symmetric.
\end{proof}
\begin{remark}
More work shows that there is in fact equality between both sides of \textit{ii)}. This is not needed here, however.
\end{remark}
\begin{prop}\label{Ybound}
For $g=sak\in\fG_i$, with $\fG_i$ as in Proposition \ref{REDUCTIONPROP},
\begin{equation}\label{WBBD}
w_{B}(\Gamma sak)\ll e^{\rho_{\fa_i}(a)},
\end{equation}
where the implied constant depends on $\Gamma$, $\fG_i$ and $B$.
\end{prop}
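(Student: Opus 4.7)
The plan is to combine the two parts of the preceding lemma with a standard lattice packing argument adapted to the Siegel set $\fG_i$. First, since $k$ lies in the compact group $K$, part \textit{i)} of that lemma gives $w_B(\Gamma sak)\ll w_B(\Gamma sa)$, with implied constant depending only on $B$ and $K$. Next, part \textit{ii)} yields
\begin{equation*}
w_B(\Gamma sa)^2\leq\#(\Gamma\cap saVa^{-1}s^{-1}),\qquad V:=BB,
\end{equation*}
so it suffices to establish the bound $\#(\Gamma\cap saVa^{-1}s^{-1})\ll e^{2\rho_{\fa_i}(a)}$.

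For this count I would invoke the usual volume packing inequality: the discreteness of $\Gamma$ together with the last clause of Proposition~\ref{REDUCTIONPROP} (bounded $\Gamma$-overlap of the Siegel cover) allows one to fix a small open symmetric neighborhood $U$ of $e\in G$, $\meas(U)>0$, and a constant $M<\infty$, such that every point of $G$ is covered by at most $M$ translates $\gamma U$, $\gamma\in\Gamma$. Consequently $\#(\Gamma\cap W)\meas(U)\leq M\meas(WU)$ for any measurable $W\subset G$, and applying this with $W=saVa^{-1}s^{-1}$ reduces the task to showing
\begin{equation*}
\meas(saVa^{-1}s^{-1}U)\ll e^{2\rho_{\fa_i}(a)}.
\end{equation*}

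By bi-invariance of Haar on the unimodular group $G$, the left-hand side equals $\meas(V\cdot a^{-1}s^{-1}Ua)$; since $s\in\Omega_i$ is confined to a fixed compact set, $s^{-1}U$ is contained in a fixed compact $\tilde U\subset G$, so it suffices to bound $\meas(V\cdot a^{-1}\tilde U a)$. Covering $V$ and $\tilde U$ by finitely many sets in $N_iA_iM_iN_i^-$-coordinates and shuffling the $A_iM_i$-factors through the nilpotent ones (using that $A_iM_i$ normalizes both $N_i$ and $N_i^-$), one is led to a set of the form $N_i^{\mathrm{cpt}}\cdot (A_iM_i)^{\mathrm{cpt}}\cdot(a^{-1}N_i^{-,\mathrm{cpt}}a)$ with fixed compact pieces. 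For $a\in\scrA_{i,\tau_0}$, all root values $\lambda(\log a)$ are bounded below by the fixed constant $\tau_0$, so conjugation by $a^{-1}$ sends fixed compact sets in $N_i$ into fixed compact sets in $N_i$, leaves the $A_iM_i$-factor bounded, and expands the Haar measure of fixed compact sets in $N_i^-$ by the Jacobian $e^{2\rho_{\fa_i}(a)}$. Plugging this into the Haar measure decomposition \eqref{measuredecomp} yields the desired bound $\meas(V\cdot a^{-1}\tilde U a)\ll e^{2\rho_{\fa_i}(a)}$.

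The principal technical nuisance will be the bookkeeping in this final measure computation: one must cover $V$ and $\tilde U$ by finitely many charts of the (merely open and dense) $N_iA_iM_iN_i^-$ decomposition, verify that products of fixed compact sets with the $a$-expanded/contracted compact sets in the nilpotent groups have measures controlled by a constant times the expanding factor, and confirm that only the $N_i^-$-component contributes the exponentially growing factor $e^{2\rho_{\fa_i}(a)}$.
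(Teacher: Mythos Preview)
Your approach is correct and genuinely different from the paper's. The paper first reduces to a small neighbourhood $\scrB_i$ contained in an $N_iA_iM_iN_i^-$-chart, then---crucially---uses the $\Gamma$-percuspidality of $(P_i,S_i)$ to show that for $a_+$ deep enough in the chamber the only lattice elements meeting $a_+\scrB_i\scrB_i a_+^{-1}$ lie in $\Gamma_{N_i}=\Gamma\cap N_i$; the count then becomes a lattice-point problem inside the nilpotent group $N_i$, handled via exponential coordinates. Your argument bypasses percuspidality entirely: you use only discreteness of $\Gamma$ for the packing bound (your appeal to the finite-overlap clause of Proposition~\ref{REDUCTIONPROP} is unnecessary---choosing $U$ with $UU^{-1}\cap\Gamma=\{e\}$ already gives $M=1$), and then reduce to a Haar-measure estimate $\meas(V\cdot a^{-1}\tilde U a)\ll e^{2\rho_{\fa_i}(a)}$. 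Your sketch of this last step is slightly loose: after the shuffling you describe, the $N_i^-$-factor is not exactly $a^{-1}C_{N_i^-}a$ but rather $D_{N_i^-}\cdot a^{-1}C_{N_i^-}a$ for some fixed compact $D_{N_i^-}$; the clean way to bound its $N_i^-$-Haar measure is to write it as $a^{-1}(aD_{N_i^-}a^{-1}\cdot C_{N_i^-})a$, observe that $aD_{N_i^-}a^{-1}$ stays in a fixed compact set for $a\in\scrA_{i,\tau_0}$ (root values are bounded below), and pull out the Jacobian $e^{2\rho_{\fa_i}(a)}$. The paper's route is more structural and gives a sharper picture of which $\gamma$ actually contribute; yours is more robust (it would work for any discrete subgroup, not just one admitting percuspidal pairs) and avoids the explicit lattice structure of $\log\Gamma_{N_i}$.
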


\begin{proof}
Firstly, we note that it suffices to find some set $\scrB$ for which $w_{\scrB}$ satisfies \eqref{WBBD}, since for any other choice of $B$, we may find $g_1,\,g_2\,\ldots,\,g_p\in G$ such that $B\subset \bigcup_{j=1}^p g_j\scrB $. We then have that for all $x\in\GaG$ and $f\in L^2(\GaG)$,
\begin{align*}
\int_{B} |f(xg)|^2\,dg\leq& \sum_{j=1}^p \int_{ g_j\scrB}|f(xg)|^2\,dg= \sum_{j=1}^p \int_{\scrB }|f(xg_j^{-1}g)|^2\,dg\\&\leq \sum_{j=1}^p w_{\scrB}(xg_j^{-1})^2\|f\|^2_{L^2(\GaG)}\ll w_{\scrB}(x)^2\|f\|^2_{L^2(\GaG)},
\end{align*}
so if \eqref{WBBD} holds for $w_{\scrB}$, it will hold for $w_B$ for all other choices of $B$ as well.

For each $i$, we will find a set $\scrB_i$ such that \eqref{WBBD} holds on $\fG_i$. Choosing $\scrB$ contained in the intersection of the sets $\scrB_i$ will then satisfy the requirements of the proposition. For any fixed $B$, we have
\begin{equation*}
w_B(\Gamma s a k)= w_B(\Gamma a (a^{-1}s a)k)\ll w_B(\Gamma a),
\end{equation*}
so we need only consider the restriction of $w_{\scrB}$ to $\scrA_i$. Since $N_iA_iM_i N_i^-$ is open and dense in $G$, we may choose $\scrB_i$ small enough so that
\begin{equation*}
\scrB_i\scrB_i\subset B_{N_i} B_{A_iM_i}B_{N^-_i},
\end{equation*}
where each of the sets in the right-hand side is a small neighbourhood of the identity in the corresponding subgroup. Let us now fix $T\geq 0$ large enough so that
\begin{equation*}
\bigcup_{a'\in\scrA_{i,T}} a'B_{N_i^-}a'^{-1} \subset B_{N_i^-}.
\end{equation*} 
Note that $\scrA_{i,\tau_0}\subset \scrA_i^{T+|\tau_0|}\scrA_{i,T}$ ($\tau_0$ being as in \ref{REDUCTIONPROP}); hence given any $a\in\scrA_{i,\tau_0}$, there exist $a_+\in \scrA_{i,T}$ and $a_-\in\scrA_i^{T+|\tau_0|}$ such that $a=a_+a_-$.
We now have that
\begin{equation*}
w_{\scrB_i}(\Gamma a) \ll w_{\scrB_i}(\Gamma a_+),
\end{equation*}
and
\begin{align*}
w_{\scrB_i}(\Gamma a_+)^2\leq \#(\Gamma \cap a_+ \scrB_i\scrB_i a_+^{-1} )\leq& \#(\Gamma \cap a_+ B_{N_i} B_{A_iM_i}B_{N^-_i} a_+^{-1})\\&=  \#\big(\Gamma \cap (a_+ B_{N_i} a_+^{-1}) B_{A_iM_i} (a_+B_{N^-_i} a_+^{-1}) \big)\\&\quad\leq \#(\Gamma \cap (a_+ B_{N_i} a_+^{-1}) B_{A_iM_i} B_{N^-_i}   ).
\end{align*}
Since we have assumed that $P_i$ is $\Gamma$-percuspidal, $\Gamma_{N_i}=\Gamma\cap N_i$ is cocompact in $N_i$, and there therefore exists a neighbourhood $B_0$ of $e$ in $G$ such that $\Gamma\cap N_iB_0=\Gamma_{N_i}$. We now additionally assume that $\scrB_i$ has been chosen small enough so that $B_{A_iM_i} B_{N^-_i}\subset B_0$. This further assumption allows us to conclude that
\begin{equation*}
w_{\scrB_i}(\Gamma a_+)^2 \leq \#(\Gamma_{N_i} \cap a_+ B_{N_i} a_+^{-1}B_0) = \#(\Gamma_{N_i} \cap a_+ B_{N_i} a_+^{-1}).
\end{equation*}
Using now the fact that $N_i$ is simply connected and nilpotent (and that $\Gamma_{N_i}$ is a lattice in $N_i$), we turn this into a Euclidean counting problem. Let $\lbrace L_j\rbrace$ be a basis of the Lie algebra of $N_i$, $\fn_i$, that is aligned with the restricted root space decomposition of $\fg$ with respect to $\fa_i$. More precisely, for each $j=1,\ldots,d=\dim\fn_i$, there exists some $\lambda_j\in\Sigma^+(\fg,\fa_i)$ such that $[H,L_j]=\lambda_j(H)L_j$ for all $H\in\fa_i$. We now define a map $\phi:\RR^{d}\rightarrow N_i$ by
\begin{equation*}
\phi(\vecx)=\exp\left(\sum_{j=1}^{d} x_j L_j\right),
\end{equation*}
and observe that by \cite[Theorem 1.127]{Knapp2}, $\phi$ is a diffeomorphism. Without loss of generality, we may assume that the basis $\lbrace L_j\rbrace$ has been chosen so that $B_{N_i}\subset \mathfrak{B}=\phi([-1,1]^{d})$. For any $\vecx=(x_1,\ldots,x_{d})\in \RR^{d}$ and $H\in\fa_i$, we have
\begin{align*}
\exp(H) \phi(\vecx) \exp(-H)= \exp\left( e^{\ad H}\sum_{j=1}^{d} x_j L_j\right)=\exp\left(\sum_{j=1}^{d} e^{\lambda_j(H)}x_j L_j\right).
\end{align*}
In particular, if $\phi(\vecx)\in B_{N_i}$, then by assumption $\vecx\in[-1,1]^d$, and so $|x_j e^{\lambda_j(H)}|\leq e^{\lambda_j(H)}$. This gives
\begin{equation*}
\phi^{-1}(a_+B_{N_i}a_+^{-1})\subset \phi^{-1}(a_+\mathfrak{B}a_+^{-1})=\prod_{j=1}^d [-e^{\lambda_j(\log_{\fa_i}a_+)},e^{\lambda_j(\log_{\fa_i}a_+)}].
\end{equation*} 
Since $a_+\in\scrA_{i,T}$, we have $\lambda_j(\log_{\fa_i}a_+)\geq T$ for all $j$, hence
\begin{equation*}
1\ll\mathrm{Vol}\big(\phi^{-1}(a_+B_{N_i}a_+^{-1})\big)\leq \mathrm{Vol}\big(\phi^{-1}(a_+\mathfrak{B}a_+^{-1})\big) =2^d e^{2\rho_{\fa_i}(a_+)}.
\end{equation*}
By \cite[Proposition 5.4.8 (b)]{Corwin}, there exists a lattice $\Lambda_i$ in $\fn_i$ and a finite number of elements $\Upsilon_1,\ldots,\Upsilon_q\in\fn_i$ such that $\log(\Gamma_{N_i}) = \bigcup_{j=1}^q \Upsilon_j+\Lambda_i$. This gives that $\phi^{-1}(\Gamma_{N_i})$ is contained in a finite union of affine lattices in $\RR^{d}$, hence
\begin{equation*}
\#(a_+B_{N_i}a_+^{-1}\cap \Gamma_{N_i})=\#\big( \phi^{-1}(a_+B_{N_i}a_+^{-1})\cap\phi^{-1}(\Gamma_{N_i})\big)\leq \#\big( \phi^{-1}(a_+\mathfrak{B}a_+^{-1})\cap\phi^{-1}(\Gamma_{N_i})\big) .
\end{equation*}
Since the side lengths of the rectangular box $\phi^{-1}(a_+\mathfrak{B}a_+^{-1})$ are bounded from below by $2e^T$ (uniformly over all $a_+\in\scrA_{i,T})$, we have 
\begin{equation*}
\#\big( \phi^{-1}(a_+\mathfrak{B}a_+^{-1})\cap\phi^{-1}(\Gamma_{N_i})\big) \ll \mathrm{Vol} \big( \phi^{-1}(a_+\mathfrak{B}a_+^{-1})\big)\ll  e^{2\rho_{\fa_i}(a_+)}.
\end{equation*}
Now, since $a=a_+a_-$,
\begin{equation*}
e^{2\rho_{\fa_i}(a_+)}\leq \left( \max_{a_0\in \scrA_i^{T+|\tau_0|}}e^{-2\rho_{\fa_i}(a_0)}\right)e^{2\rho_{\fa_i}(a)},
\end{equation*}
giving, for $sak\in\fG_{i}$:
\begin{equation*}
w_{\scrB_i}(\Gamma sak) \ll e^{\rho_{\fa_i}(a)}.
\end{equation*}
\end{proof}
We now fix, once and for all, a compact, symmetric neighbourhood $\scrB$ of the identity in $G$, and define the \textit{invariant height function} $\scrY_{\Gamma}$ on $\GaG$ through $\scrY_{\Gamma}(x):=w_{\scrB}(x)$. Proposition \ref{sobbnd} gives that for $m>\frac{\dim G}{2}$,
\begin{equation}\label{fbdd}
|f(x)|\ll_{\Gamma} \|f\|_{\scrS^m(\GaG)}\scrY_{\Gamma}(x)\qquad\forall f\in\scrS^m(\GaG),\,x\in\GaG.
\end{equation}
\begin{cor}\label{scrYL1cor}
$\scrY_{\Gamma}\in L^1(\GaG)$.
\end{cor}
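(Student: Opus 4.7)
The plan is to combine the finite covering $G=\bigcup_{i=1}^\kappa\Gamma\fG_i$ from Proposition \ref{REDUCTIONPROP} with the pointwise bound $\scrY_\Gamma(\Gamma sak)\ll e^{\rho_{\fa_i'}(\log a)}$ of Proposition \ref{Ybound}, and to exploit a compensating Jacobian $e^{-2\rho_{\fa_i'}(\log a)}$ coming from the Haar measure expressed in coordinates adapted to the split parabolic pair $(P_i,S_i;\scrA_i)$.

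First I would note that any fundamental domain for $\Gamma$ in $G$ is contained in $\bigcup_i\fG_i$, so by non-negativity of $\scrY_\Gamma$
\begin{equation*}
\int_{\GaG}\scrY_\Gamma\,d\mu_{\GaG}\leq\sum_{i=1}^\kappa\int_{\fG_i}\scrY_\Gamma(\Gamma g)\,dg.
\end{equation*}
On each Siegel set $\fG_i=\Omega_i\scrA_{i,\tau_0}K$, I would parametrize by $(s,a,k)\in\Omega_i\times\scrA_{i,\tau_0}\times K$ and invoke the Iwasawa-type decomposition
\begin{equation*}
dg=e^{-2\rho_{\fa_i'}(\log a)}\,ds\,da\,dk
\end{equation*}
valid on the open dense subset $S_i\scrA_iK\subset G$; the modular factor comes solely from the adjoint action of $\scrA_i$ on $\fn_i$, with the $M_i$ and $\scrA_i^\perp$ contributions being trivial (the latter precisely because $\scrA_i$ is a split component). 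Substituting the bound from Proposition \ref{Ybound} then collapses the two exponentials to $e^{-\rho_{\fa_i'}(\log a)}$, yielding
\begin{equation*}
\int_{\fG_i}\scrY_\Gamma(\Gamma g)\,dg\ll\vol(\Omega_i)\,\vol(K)\int_{\scrA_{i,\tau_0}}e^{-\rho_{\fa_i'}(\log a)}\,da.
\end{equation*}

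To close out, since $\Sigma_0^+(\fg,\fa_i')$ is a basis of $(\fa_i')^*$, the map $H\mapsto(\lambda(H))_{\lambda\in\Sigma_0^+(\fg,\fa_i')}$ gives global coordinates $(t_\lambda)$ on $\fa_i'$ in which $\scrA_{i,\tau_0}$ corresponds to the orthant $\lbrace t_\lambda\geq\tau_0\rbrace$. Writing $\rho_{\fa_i'}=\sum_\lambda c_\lambda\lambda$, each simple $\lambda$ itself appears in the defining sum for $\rho_{\fa_i'}$ with coefficient $\tfrac12\dim\fn_\lambda\geq\tfrac12$, so every $c_\lambda>0$. The remaining integral therefore factors as a convergent product $\prod_\lambda\int_{\tau_0}^\infty e^{-c_\lambda t}\,dt$, and summing over $i$ gives $\scrY_\Gamma\in L^1(\GaG)$.

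The main technical obstacle will be justifying the measure decomposition $dg=e^{-2\rho_{\fa_i'}(\log a)}\,ds\,da\,dk$: one must identify the modular function of the semidirect product $P_i=\scrA_i\ltimes S_i$ (with $S_i$ unimodular thanks to the split-component condition $\tr(\ad H|_{\fn_i})=0$ for $H\in\fa_i'^\perp$) and verify that the finite-to-one overlap coming from $P_i\cap K=M_i\cap K$ contributes only a harmless multiplicative constant. Once this bookkeeping is done, everything else is linear algebra on $\fa_i'$ combined with Propositions \ref{REDUCTIONPROP} and \ref{Ybound}.
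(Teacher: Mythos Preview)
Your proposal is correct and follows essentially the same route as the paper: cover by Siegel sets via Proposition \ref{REDUCTIONPROP}, use the measure decomposition $dg=e^{-2\rho_{\fa_i'}(\log a)}\,d_ls\,da\,dk$ (the paper cites \cite[p.\ 25]{Langlands} for this), apply the pointwise bound of Proposition \ref{Ybound}, and reduce to $\int_{\scrA_{i,\tau_0}}e^{-\rho_{\fa_i'}(\log a)}\,da<\infty$ via the simple-root coordinates on $\fa_i'$. Your explicit justification that each coefficient $c_\lambda$ of $\rho_{\fa_i'}$ in the simple-root basis is strictly positive (which the paper leaves implicit) is a welcome addition, and your identification of the split-component condition as the reason $S_i$ is unimodular is exactly the point needed to make the Haar measure decomposition go through.
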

\begin{proof}
By decomposing the measure on $\fG_i$ in a manner similar to \eqref{measuredecomp} (cf.\ \cite[p. 25]{Langlands}), and then using \eqref{WBBD}, we have
\begin{align*}
\int_{\GaG}&\scrY_{\Gamma}(x) \,d\mu(x)\leq \sum_{i=1}^{\kappa}\int_{\fG_i} \scrY_{\Gamma}(\Gamma g) \,dg = \sum_{i=1}^{\kappa}  \int_{\Omega_i}\int_{\scrA_{i,\tau_0}}\int_K \scrY_{\Gamma}(\Gamma sak) e^{-2\rho_{\fa_i}(a)}\,dk\,da\,d_ls\\&\ll \int_{\scrA_{i,\tau_0}}  e^{\rho_{\fa_i}(a)}e^{-2\rho_{\fa_i}(a)}\,da.
\end{align*}
We now use the fact that the Haar measure $da$ on $\scrA_i$ may be expressed as the pushforward (under the exponential map) of a constant multiple of the Lebesgue measure $dm$ on $\fa_i'$. Letting $H_1,\ldots,H_r$ be the basis of $\fa_i'$ defined by $\alpha_{j}(H_l)=\delta_{jl}$ for all $\alpha_j\in\Sigma_0^+(\fg,\fa_i')=\lbrace\alpha_1,\ldots,\alpha_r\rbrace$, we have
\begin{equation*}
\int_{\scrA_{i,\tau_0}}  e^{-\rho_{\fa_i}(a)}\,da\asymp\int_{\lbrace H\in \fa_i'\,:\, \lambda(H)>\tau_0\;\forall \lambda\in \Sigma^+_0(\fg,\fa_i')\rbrace}  e^{-\rho_{\fa_i}(H)}\,dm(H) \ll \prod_{j=1}^r \int_{\tau_0}^{\infty} e^{-y_j\rho_{\fa_i}(H_j)}\,dy_j<\infty.
\end{equation*}
\end{proof}
We conclude this section by giving bounds on integrals of $\scrY_{\Gamma}$ over  translates of pieces of our horospherical subgroup $U^+$. Such bounds will only be needed for integrals over relatively compact sets $B$ of positive measure in $U^+$. This will allow us to ``thicken'' $B$ to a set $C$ of positive measure in $G$ such that $B\subset C$, and make use of the \emph{wavefront property}; $Bg_{t}$ and $Cg_{t}$ will remain ``close'' to each other as $t\rightarrow-\infty$. In \cite[Proposition 6]{Edwards}, we considered the case $G=\SL(2,\CC)$, and proved a corresponding result for an integral along the \emph{boundary} of such a piece of a horosphere (subject to certain restrictions on the shape of the subset). Since the boundary has measure zero, the method used here does not work, and the proof becomes considerably more complicated. Here, though, the proof is relatively straightforward:
\begin{cor}\label{scrYAVG}
Let $B\subset U^+$ be relatively compact. Then 
\begin{equation*}
\int_B \scrY_{\Gamma}(xug_{t})\,du\ll_{\Gamma,B} \scrY^2_{\Gamma}(x)\qquad \forall x\in\GaG,\, t\leq 0.
\end{equation*}
\end{cor}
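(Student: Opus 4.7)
The plan is to thicken $B\subset U^+$ into $G$ by a small neighborhood in $MAN^-$, exploit the wavefront contraction of $g_t$ on $N^-$ valid for $t\le 0$ to pass to an integral of $\scrY_\Gamma$ against this thickened set, unfold to $\GaG$, and bound the resulting lattice-counting factor by $\scrY_\Gamma(\Gamma x)^2$ via the same reduction-theoretic arguments as in Proposition~\ref{Ybound}.

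First I would fix a compact symmetric neighborhood $\mathcal{O}$ of $e$ in $MAN^-$ small enough that (i) the multiplication map $(u,v)\mapsto uv$ is a diffeomorphism from $B\times\mathcal{O}$ onto its image in $G$, and (ii) $dg\asymp du\,dv$ on that image by the decomposition~\eqref{measuredecomp}. Writing $v=ma\bar n$ and using that $A$ and $M$ commute with $g_{\RR}$, one has $g_{-t}vg_t=ma\cdot(g_{-t}\bar n g_t)$; since $\bar n\in N^-$, the factor $g_{-t}\bar n g_t\to e$ as $t\to-\infty$, so $\mathcal{O}^*:=\bigcup_{t\le 0}g_{-t}\mathcal{O} g_t$ is relatively compact in $G$. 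The identity $xuvg_t=xug_t\cdot(g_{-t}vg_t)$ together with the inequality $w_B(yh)\ll w_B(y)$ (valid for $h$ in a compact set, from the lemma preceding Proposition~\ref{Ybound}) applied to $\mathcal{O}^*$ then gives $\scrY_\Gamma(xuvg_t)\asymp\scrY_\Gamma(xug_t)$ uniformly in $v\in\mathcal{O}$, $u\in U^+$, and $t\le 0$. Averaging over $v\in\mathcal{O}$, integrating over $u\in B$, and applying the change of variables $g=uv$ yields
\begin{equation*}
\int_B\scrY_\Gamma(xug_t)\,du\ll_{B,\mathcal{O}}\int_{B\mathcal{O}}\scrY_\Gamma(xgg_t)\,dg.
\end{equation*}

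Next I would pick a representative $\tilde x\in G$ of $x$, substitute $h=\tilde xgg_t$, and unfold against $\Gamma$ to obtain
\begin{equation*}
\int_{B\mathcal{O}}\scrY_\Gamma(xgg_t)\,dg=\int_{\tilde xB\mathcal{O} g_t}\scrY_\Gamma(\Gamma h)\,dh\le \|\scrY_\Gamma\|_{L^1(\GaG)}\sup_{h\in G}\#\bigl\{\gamma\in\Gamma\col\gamma h\in\tilde xB\mathcal{O} g_t\bigr\}.
\end{equation*}
The sup is controlled by $\#(\Gamma\cap D_tD_t^{-1})$ with $D_t=\tilde xB\mathcal{O} g_t$, and crucially $D_tD_t^{-1}=\tilde xE\tilde x^{-1}$ is $t$-independent, where $E:=B\mathcal{O}\mathcal{O}^{-1}B^{-1}$ is a fixed relatively compact subset of $G$. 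Combined with Corollary~\ref{scrYL1cor}, the statement is thereby reduced to the claim that $\#(\Gamma\cap\tilde xE\tilde x^{-1})\ll_E\scrY_\Gamma(\Gamma\tilde x)^2$ for all $\tilde x\in G$.

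The main obstacle is this claim, which is the reverse direction of the inequality $w_B(\Gamma g)^2\le\#(\Gamma\cap gBBg^{-1})$ (the equality noted in the remark following that lemma). I would establish it by rerunning the reduction-theoretic argument of Proposition~\ref{Ybound} with $E$ in place of $\scrB\scrB$: by $\Gamma$-invariance and Proposition~\ref{REDUCTIONPROP}, we may assume $\tilde x=sak\in\fG_i$ with $a\in\scrA_{i,\tau_0}$, so $\tilde xE\tilde x^{-1}=sa(kEk^{-1})a^{-1}s^{-1}$ with $kEk^{-1}\subseteq KEK$ a fixed compact set. Covering $KEK$ by finitely many $G$-translates of a compact symmetric neighborhood $\scrB'$ of $e$ chosen small enough for the percuspidality-plus-Euclidean-lattice-box argument of Proposition~\ref{Ybound} to apply verbatim, each piece contributes $\ll e^{2\rho_{\fa_i}(a)}$ to the count; combining with the matching lower bound $\scrY_\Gamma(\Gamma\tilde x)\gg e^{\rho_{\fa_i}(a)}$ (obtained by choosing $f$ concentrated near $\Gamma\tilde x$ in the definition of $w_\scrB$) yields the claim and completes the proof of the corollary.
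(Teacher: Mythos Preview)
Your thickening step coincides with the paper's, but thereafter the two arguments diverge. After obtaining
\[
\int_B\scrY_\Gamma(xug_t)\,du\ll\int_{B\widetilde{B}_0}\scrY_\Gamma(xgg_t)\,dg,
\]
the paper does \emph{not} unfold and count lattice points. Instead it covers $B\widetilde{B}_0$ by finitely many left translates $\scrB g_j$ of the defining set $\scrB$, writes $\scrY_\Gamma=(\sqrt{\scrY_\Gamma})^2$, and applies the very definition of $\scrY_\Gamma=w_\scrB$ to the $L^2$ function $\rho(g_jg_t)\sqrt{\scrY_\Gamma}$ (which lies in $L^2(\GaG)$ by Corollary~\ref{scrYL1cor}). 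This yields
\[
\int_{\scrB}\bigl|\bigl(\rho(g_jg_t)\sqrt{\scrY_\Gamma}\bigr)(xg)\bigr|^2\,dg\le\scrY_\Gamma(x)^2\,\bigl\|\rho(g_jg_t)\sqrt{\scrY_\Gamma}\bigr\|_{L^2(\GaG)}^2=\scrY_\Gamma(x)^2\,\|\scrY_\Gamma\|_{L^1(\GaG)},
\]
and the corollary follows in one line.

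Your route is also correct but considerably longer: after unfolding you need two separate reduction-theoretic facts, namely the upper bound $\#(\Gamma\cap\tilde xE\tilde x^{-1})\ll e^{2\rho_{\fa_i}(a)}$ (an extension of the proof of Proposition~\ref{Ybound} to an arbitrary compact $E$, which requires a covering-and-translation argument to reduce to small $\scrB'$) and the \emph{lower} bound $\scrY_\Gamma(\Gamma\tilde x)\gg e^{\rho_{\fa_i}(a)}$. The latter is only asserted in the paper (the remark about equality in part~(ii) of the lemma) and never proved; supplying it requires a genuine construction of a near-extremal $f$, or equivalently a lattice-point \emph{lower} bound for $\Gamma_{N_i}$ in the expanded box. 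Both steps go through, but the paper's trick of feeding $\sqrt{\scrY_\Gamma}$ back into the definition of $w_\scrB$ bypasses all of this and deserves to be noticed.
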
 
\begin{proof}
Let $P=NAM$ be the parabolic subgroup such that $U^+=N$ and $g_{\RR}\subset A$. We start by choosing a compact symmetric neighbourhood $B_0$ of $e$ in $AMN^-\subset G$. Let  $\widetilde{B}_0$ denote the closure of $\bigcup_{s\leq 0} g_{-s} B_0g_{s}$; as in \eqref{Omegatau}, $\widetilde{B}_0$ is a compact neighbourhood of $e$ in $AMN^-$, and  
\begin{align*}
\scrY_{\Gamma}(xg_{t})\!&=\scrY_{\Gamma}(xbg_{t}(g_{-t} b^{-1}g_{t}))\ll_{\widetilde{B}_0} \scrY_{\Gamma}(xbg_{t})\qquad \forall x\in\GaG,\,t\leq 0,\,b\in \widetilde{B}_0.
\end{align*}
After taking into account the modular function $\Delta(am)=e^{-2\rho_{\fa}(a)}$ of $AM$ on $\widetilde{B}_0$, we have (cf.\ \eqref{measuredecomp})
\begin{equation*}
\int_B \scrY_{\Gamma}(xug_{t})\,du\ll \int_{B\widetilde{B}_0} \scrY_{\Gamma}(xgg_{t})\,dg.
\end{equation*}
We now choose $g_1,\ldots, g_R\in G$ such that $B\widetilde{B}_0\subset \bigcup_{j=1}^R \scrB g_j$. By Corollary \ref{scrYL1cor}, $\sqrt{\scrY_{\Gamma}}\in L^2(\GaG)$, and so from the definition of $\scrY_{\Gamma}$:
\begin{align*}
 \int_{B\widetilde{B}_0} \scrY_{\Gamma}(xgg_{t})\,dg &\leq \sum_{j=1}^R  \int_{\scrB} \left|\big(\rho(g_jg_{t})\sqrt{\scrY_{\Gamma}}\big)(xg)\right|^2\,dg\\\leq & \sum_{j=1}^R \scrY_{\Gamma}^2(x)\left\|\rho(g_jg_{t})\sqrt{\scrY_{\Gamma}} \right\|_{L^2(\GaG)}^2 \ll \scrY_{\Gamma}^2(x).
\end{align*}
\end{proof}
If $G$ is an algebraic group and $\Gamma$ is an arithmetic lattice in $G$, we may use Siegel's Conjecture, proved by Ji \cite{Ji} and Leuzinger \cite{Leuzinger}, to give a bound on a corresponding integral of $\scrY_{\Gamma}^2$. Let $\mathsf{dist}$ denote the left-invariant Riemannian metric on $G$ induced by the inner product $\langle X_1, X_2\rangle =-B(X_1,\theta X_2)$ on $\fg$. We now define a metric $\mathsf{dist}_{\GaG}$ on $\GaG$ by
\begin{equation*}
\mathsf{dist}_{\GaG}(\Gamma g_1, \Gamma g_2)=\inf_{\gamma\in\Gamma} \mathsf{dist}(\gamma g_1,g_2).
\end{equation*}

\begin{cor}\label{scrYAVG2}
Suppose $\mathbf{G}$ is a semisimple algebraic group defined over $\QQ$, $G=\mathbf{G}(\RR)$, with rank greater or equal to $2$, and $\Gamma\subset\mathbf{G}(\QQ)$ is an arithmetic subgroup. Then there exists $p\geq 0$ such that for any relatively compact $B\subset U^+$, 
\begin{equation*}
\int_B \scrY_{\Gamma}^2(xug_{t})\,du\ll_{\Gamma,B,g_{\RR}} \big(1+\mathsf{dist}_{\GaG}(\Gamma e, x)+|t|\big)^p\scrY^2_{\Gamma}(x)\qquad \forall x\in\GaG,\, t\leq 0.
\end{equation*}
\end{cor}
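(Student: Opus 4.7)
The plan is to peel off one copy of $\scrY_\Gamma$ as a supremum and apply Corollary~\ref{scrYAVG} to the remaining integral. Specifically, for all $x\in\GaG$ and $t\leq 0$,
\begin{equation*}
\int_B \scrY_\Gamma^2(xug_t)\,du \leq \Big(\sup_{u \in B} \scrY_\Gamma(xug_t)\Big) \int_B \scrY_\Gamma(xug_t)\,du \ll_{\Gamma,B} \Big(\sup_{u \in B} \scrY_\Gamma(xug_t)\Big)\,\scrY_\Gamma^2(x),
\end{equation*}
where the second inequality is Corollary~\ref{scrYAVG}. Thus the whole matter reduces to controlling $\sup_{u \in B} \scrY_\Gamma(xug_t)$ by a polynomial in $\mathsf{dist}_{\GaG}(\Gamma e, x)$ and $|t|$ (uniformly in $u \in B$).

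This is where the arithmeticity and the assumption $\rank(G)\geq 2$ enter. Under these hypotheses, the polynomial version of Siegel's Conjecture, proved by Ji~\cite{Ji} and Leuzinger~\cite{Leuzinger}, furnishes an exponent $p'\geq 0$ (depending only on $\Gamma$) with
\begin{equation*}
\scrY_\Gamma(y) \ll_\Gamma \big(1 + \mathsf{dist}_{\GaG}(\Gamma e, y)\big)^{p'}\qquad \forall\, y \in \GaG.
\end{equation*}
I apply this with $y = xug_t$. The triangle inequality together with the left-invariance of $\mathsf{dist}$ on $G$ and the descent to $\GaG$ yield
\begin{equation*}
\mathsf{dist}_{\GaG}(\Gamma e, xug_t) \leq \mathsf{dist}_{\GaG}(\Gamma e, x) + \mathsf{dist}(e, u) + \mathsf{dist}(e, g_t);
\end{equation*}
since $B$ is relatively compact, $\mathsf{dist}(e,u)$ is bounded uniformly in $u\in B$, and from $g_t = \exp(tY)$ one has $\mathsf{dist}(e, g_t)\leq |t|\,\|Y\|$. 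Hence
\begin{equation*}
\sup_{u \in B}\scrY_\Gamma(xug_t) \ll_{\Gamma,B,g_\RR} \big(1 + \mathsf{dist}_{\GaG}(\Gamma e, x) + |t|\big)^{p'},
\end{equation*}
and combining this with the opening display produces the corollary with $p = p'$.

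The only step that is not essentially an elementary consequence of Corollary~\ref{scrYAVG} and the triangle inequality is the invocation of Siegel's Conjecture in the polynomial form above; this is the main obstacle, and also the reason the rank~$\geq 2$ hypothesis appears. In the rank-one setting the height function grows \emph{exponentially} in the Riemannian distance (as is already visible in the upper half-plane model, where $y\asymp e^{\mathsf{dist}}$), so no bound of the form $\scrY_\Gamma(y)\ll(1+\mathsf{dist}_{\GaG}(\Gamma e, y))^{p'}$ can hold there, and a genuinely different argument would be required.
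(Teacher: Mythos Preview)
The central step of your argument is the pointwise bound
\[
\scrY_\Gamma(y)\ll_\Gamma\big(1+\mathsf{dist}_{\GaG}(\Gamma e,y)\big)^{p'},
\]
and this is false. The height function grows \emph{exponentially} in the distance in every rank, not only in rank one: by Proposition~\ref{Ybound}, for $g=sak$ in a Siegel set $\fG_i$ one has $\scrY_\Gamma(\Gamma g)\ll e^{\rho_{\fa_i}(\log a)}$, while $\mathsf{dist}_G(e,g)$ is comparable to $\|\log a\|$; since $\rho_{\fa_i}$ is a nonzero linear functional, along a generic ray $\exp(tH)$ one gets $\scrY_\Gamma\asymp e^{ct}$ with $t\asymp\mathsf{dist}$. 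So ``peeling off'' a sup of $\scrY_\Gamma$ cannot give a polynomial factor. Your closing paragraph actually diagnoses the obstruction correctly for rank one but then misattributes the role of the rank~$\geq 2$ hypothesis: that hypothesis is only there because the Ji--Leuzinger theorem is stated in that setting (the rank-one case being covered by Garland--Raghunathan, cf.\ the remark after the corollary), not because the height is polynomially bounded in higher rank.

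What Siegel's Conjecture actually provides is the metric comparison $\mathsf{dist}_G(g,e)\leq\mathsf{dist}_{\GaG}(\Gamma g,\Gamma e)+D$ for $g$ in the Siegel domain. The paper exploits this differently: it sets $F=\scrY_\Gamma\mathbf{1}_{\fB_{r(x,t)}}$, runs the thickening argument of Corollary~\ref{scrYAVG} with $F$ in place of $\sqrt{\scrY_\Gamma}$ to obtain $\int_B\scrY_\Gamma^2(xug_t)\,du\ll\scrY_\Gamma^2(x)\|F\|_{L^2(\GaG)}^2$, and then bounds $\|F\|_{L^2}^2$ by integrating over the Siegel sets. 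The crucial point is that in that integral the growth $e^{2\rho_{\fa_i}(a)}$ of $\scrY_\Gamma^2$ exactly cancels the Jacobian $e^{-2\rho_{\fa_i}(a)}$ of the Haar measure, leaving a raw volume in $\scrA_i$; Siegel's Conjecture then says this volume is supported on a ball of radius $\ll r(x,t)$, hence is polynomial in $r(x,t)$. Your approach bypasses this cancellation, which is why it cannot work.
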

\begin{proof}
Letting $R=\max_{u\in B} \mathsf{dist}(e,u)$ and $\|X\|=\sqrt{\langle X, X\rangle}$ for $X\in\fg$, we have
\begin{equation*}
\mathsf{dist}_{\GaG} (xug_t,\Gamma e)\leq \mathsf{dist}_{\GaG}(x,\Gamma e)+R+|t|\|Y\|\qquad \forall x\in\GaG,\, u\in B,\,t\in\RR. 
\end{equation*}
Making the definitions $\fB_r=\lbrace y\in\GaG\,:\, \mathsf{dist}_{\GaG}(\Gamma e, y)\leq r\rbrace$ and $r(x,t)=\mathsf{dist}_{\GaG}(x,\Gamma e)+R+|t|\|Y\|$, we then have that
\begin{equation*}
\scrY^2_{\Gamma}(xug_t) =\scrY^2_{\Gamma}(xug_t)\mathbf{1}^2_{\fB_{r(x,t)}}(xug_t)\qquad \forall x\in\GaG,\,u\in B,\, t\in \RR.
\end{equation*}
We denote $F=\scrY_{\Gamma}\mathbf{1}_{\fB_{r(x,t)}}$, and note that $F\in L^2(\GaG)$. Now arguing as in the proof of Corollary \ref{scrYAVG}, for $t\leq 0$,
\begin{equation*}
\int_B \scrY_{\Gamma}^2(xug_{t})\,du=\int_B |F(xug_{t})|^2\,du\ll \sum_{j=1}^R \scrY_{\Gamma}^2(x)\left\|\rho(g_jg_{t})F\right\|_{L^2(\GaG)}^2 \ll \scrY_{\Gamma}^2(x)\|F\|^2_{L^2(\GaG)}.
\end{equation*}
All that now remains is to bound $\|F\|^2_{L^2(\GaG)}$. From \cite[Theorem 5.7]{Leuzinger} or \cite[Theorem 7.6]{Ji}, there exists a constant $D>0$ such that
\begin{equation*}
\disG{g}{e}\leq\disGa{\Gamma g}{\Gamma e}+D \qquad \forall g\in \bigcup_{i=1}^{\kappa} \fG_i,
\end{equation*}
and hence
\begin{equation*}
\mathbf{1}_{\fB_{r(x,t)}}(\Gamma g)=1\Rightarrow \mathsf{dist}(g,e)\leq r(x,t)+D \qquad \forall g\in \bigcup_{i=1}^{\kappa} \fG_i.
\end{equation*}
This, together with Proposition \ref{Ybound}, gives
\begin{align*}
\int_{\fG_i} &F^2(\Gamma g)\,dg\ll \int_{\Omega_i}\int_{\scrA_{i,\tau_0}}\int_K e^{2\rho_i(a)}\mathbf{1}_{\fB_{r(x,t)}}(\Gamma sak)e^{-2\rho_i(a)}\,dk\,da\,d_ls.
\end{align*}
Since $\Omega_i$ and $K$ are compact, there exists a constant $c>0$ that enables us to bound the integral by
\begin{align*}
\ll\int_{\lbrace H\in \log\scrA_{i,\tau_0}\,:\,\|H\|\leq c+r(x,t)+D\rbrace}dm(H)\ll (1+r(x,t))^{\dim \scrA_i}.
\end{align*}
\end{proof}
\begin{remark} Using Garland and Raghunathan's reduction theory \cite{GarlandRaghunathan}, it should be possible to prove a similar result for any lattice in an arbitrary $G$ of rank one.
\end{remark}
\section{Proof of Theorem 1} \label{thm1proof}
\subsection*{Proof of Theorem \ref{maintheorem1}} 
We start by making some initial reductions. The constant $m_1$ for which we prove \eqref{mainthmbd} will satisfy $m_1>\frac{\dim G}{2}$, and thus by Proposition \ref{sobbnd}, both sides of \eqref{mainthmbd} depend continuously on $f\in\scrS^{m_1}(V)$ (with respect to $\|\cdot\|_{\scrS^{m_1}(V)}$). Hence, since $V^{\infty}$ is dense in $\scrS^{n_2}(V)$, without loss of generality, from now on we may assume that $f\in V^{\infty}$. Similarly, we may assume that $\chi\in C_c^{\infty}(B)$.

We let $\left(\int_{\mathsf{Z}}^{\oplus} \mathcal{\pi}_{\zeta}\,d\upsilon(\zeta), \int_{\mathsf{Z}}^{\oplus} \mathcal{H}_{\zeta}\,d\upsilon(\zeta) \right)$ denote the direct integral decomposition of $(\rho,V)$ into irreducible unitary representations, and $\int_{\mathsf{Z}} f_{\zeta}\,d\upsilon(\zeta)$ be the corresponding decomposition of $f$. Using the notation $\scrI_f^{\chi}(t)=\scrI_f^{\chi}(g_{t})$ from Section \ref{DIFFEQSSEC}, we then have
\begin{align*}
\scrI_f^{\chi}(t)= \int_{U^+} \chi(u)\rho(ug_{t})f\,du=&\int_{\mathsf{Z}} \int_{U^+} \chi(u)\pi_{\zeta}(ug_{-t})f_{\zeta}\,du\,d\upsilon(\zeta)=\int_{\mathsf{Z}} \scrI_{f_{\zeta}}^{\chi}(t)\,d\upsilon(\zeta).
\end{align*}
Proposition \ref{BurgerProp2} is now applied with $\alpha=\min_j\alpha_j$ to $\upsilon$-a.e. $f_{\zeta}$ (by Lemma \ref{DECOMPRATE}, $\eta$ is a rate of decay for the matrix coefficients of $g_{\RR}$ in $\upsilon$-a.e. $(\pi_{\zeta},\scrH_{\zeta})$), giving
\begin{equation*}
\scrI_{f_{\zeta}}^{\chi}(t)=\sum_{\bk\in\bI_{k_0}}\int_{-\infty}^0 \scrC_{\zeta,\bk}(t,s)\scrI_{d\pi_{\zeta}(U_{\bk})f_{\zeta}}^{X_{\bk}\chi}(s)\,ds+\sum_{\bj\in\bI_{\leq k_0-1}}\sum_{i=0}^{W} \scrD_{\zeta,\bj,i}(t)\scrI_{d\pi_{\zeta}(Y^{i}U_{\bj})f_{\zeta}}^{X_{\bj}\chi}(0).
\end{equation*}
By integrating this identity over $\mathsf{Z}$, we get
\begin{align}\label{IAVDECOMP}
\scrI_{f}^{\chi}(t)=\sum_{\bk\in\bI_{k_0}}&\int_{\mathsf{Z}}\int_{-\infty}^0 \scrC_{\zeta,\bk}(t,s)\scrI_{d\pi_{\zeta}(U_{\bk})f_{\zeta}}^{X_{\bk}\chi}(s)\,ds\,d\upsilon(\zeta)\\\notag&+\sum_{\bj\in\bI_{\leq k_0-1}}\sum_{i=0}^{W}\int_{\mathsf{Z}} \scrD_{\zeta,\bj,i}(t)\scrI_{d\pi_{\zeta}(Y^{i}U_{\bj})f_{\zeta}}^{X_{\bj}\chi}(0)\,d\upsilon(\zeta).
\end{align}
We now form intertwining operators $\lbrace T_{\bk}(\tau,s)\rbrace$, $\lbrace S_{\bj,i}(\tau)\rbrace$ in the manner of Section \ref{Reptheorysec}: for $\phi\in V^{\infty}$, define
\begin{align*}
T_{\bk}(\tau,s)\phi&=\int_{\mathsf{Z}} \scrC_{\zeta,\bk}(\tau,s) \phi_{\zeta}\,d\upsilon(\zeta)\\ S_{\bj,i}(\tau)\phi&=\int_{\mathsf{Z}}  \scrD_{\zeta,\bj,i}(\tau)\phi_{\zeta}\,d\upsilon(\zeta).
\end{align*}
The bounds provided in Proposition \ref{BurgerProp2} suffice to exchange the order of integration in \eqref{IAVDECOMP}. Furthermore, from Proposition \ref{BurgerProp2} \textit{(2)}, each $S_{\bj,i}$ is a linear map $V^{\infty}\rightarrow V^{\infty}$ which is continuous with respect to two suitably chosen Sobolev norms. We may thus rewrite \eqref{IAVDECOMP} as
\begin{equation*}
\scrI_{f}^{\chi}(t)=\sum_{\bk\in\bI_{k_0}}\int_{-\infty}^0 T_{\bk}(t,s)\scrI_{d\rho(U_{\bk})f}^{X_{\bk}\chi}(s)\,ds+\sum_{\bj\in\bI_{\leq k_0 -1}}\sum_{i=0}^{W} S_{\bj,i}(t)\scrI_{d\rho(Y^{i}U_{\bj})f}^{X_{\bj}\chi}(0).
\end{equation*}
The definition of $\scrI$, and the fact that all of the operators $ T_{\bk}(t,s)$ and $S_{\bj,i}(t)$ commute with $\rho$, gives
\begin{align}\label{AVGIDENT}
\int_{U^+}\chi(u)\rho(ug_{t})f\,du=\sum_{\bk\in\bI_{k_0}}&\int_{-\infty}^0 \int_{U^+}(X_{\bk}\chi)(u)\rho(ug_s)T_{\bk}(t,s)d\rho(U_{\bk})f\,du\,ds\\\notag&+\sum_{\bj\in\bI_{\leq k_0 -1}}\sum_{i=0}^{W} \int_{U^+}(X_{\bj}\chi)(u)\rho(u)S_{\bj,i}(t)d\rho(Y^{i}U_{\bj})f\,du.
\end{align}
We now apply the functional ``evaluation at $x$'' to both sides: if $m> \frac{\dim(G)}{2}$, then by \eqref{fbdd} (and recalling that $\supp(\chi)\subset B$),
\begin{align}\label{final}
\left|\int_{U^+}\chi(u)f(xug_{t})\,du\right|&\ll_{\Gamma,m}\!\!\!\sum_{\bk\in\bI_{k_0}}\!\!\!\|X_{\bk}\chi\|_{L^{\infty}(B)}\!\!\int_{-\infty}^0\!\! \!\|T_{\bk}(t,s)d\rho(U_{\bk})f\|_{\scrS^m(\GaG)}\!\!\int_B\! \scrY_{\Gamma}(xug_s)\,du\,ds\\\notag&\quad+\!\!\!\!\sum_{\bj\in\bI_{\leq k_0 -1}}\!\sum_{i=0}^{W} \|X_{\bj}\chi\|_{L^{\infty}(B)}\|S_{\bj,i}(t)d\rho(Y^{i}U_{\bj})f\|_{\scrS^m(\GaG)}\!\!\int_B \!\scrY_{\Gamma}(xu)\,du.
\end{align}
Corollary \ref{scrYAVG} is now used to bound the integrals over $B$ of $\scrY_{\Gamma}$ by $\scrY_{\Gamma}(x)^2$. Decomposing the Sobolev norms $\|T_{\bk}(t,s)d\rho(U_{\bk})f\|_{\scrS^m(\GaG)}$ and
$\|S_{\bj,i}(t)d\rho(Y^{i}U_{\bj})f\|_{\scrS^m(\GaG)}$ as in
\eqref{SOBNORMDECOMP} and applying the bounds from Proposition \ref{BurgerProp2} gives
\begin{align*}
\|T_{\bk}(t,s)d\rho(U_{\bk})f\|_{\scrS^m(\GaG)}&\ll (1+|t|^W) e^{\eta t+ \frac{\alpha}{5}s}\|f\|_{\scrS^{m_1}(\GaG)}\\\notag\|S_{\bj,i}(t)d\rho(Y^{i}U_{\bj})f\|_{\scrS^m(\GaG)}&\ll (1+|t|^W)e^{\eta t}\|f\|_{\scrS^{m_1}(\GaG) }
\end{align*}
for some $m_1\in\NN$. Entering these bounds into \eqref{final} and choosing $m_2$ so that all factors involving $\chi$ in \eqref{final} are bounded by $\|\chi\|_{W^{m_2,\infty}(B)}$ proves the theorem for any $f\in V^{\infty}$ and $\chi\in C_c^{\infty}(B)$. As noted at the beginning of the proof, the continuous dependency on the functions in \eqref{mainthmbd} may now be used to conclude that the same holds for \emph{all} $f\in\scrS^{m_1}(\GaG)$ and $\chi\in W^{m_2,\infty}_0(B)$.

\hspace{425.5pt}\qedsymbol

\begin{remark}\label{ClosedHoros}
The proof of Theorem \ref{maintheorem1} simplifies somewhat when considering the equidistribution of translates of entire closed horospherical orbits. Suppose that $\Gamma\cap U^+$ is a lattice in $U^+$. Let $\varphi\in C_c^{\infty}(U^+)$ be such that $\varphi(u)\geq 0$ for all $u\in U^+$ and $\int_{U^+} \varphi(u)\,du=1$. We now let $\chi= \mathbf{1}_{\scrF}*\varphi\in C_c^{\infty}(U^+)$, where $\scrF$ is a   reasonable fundamental domain for $\Gamma\cap U^+$ in $U^+$. For all $u_0\in U^+$ we have
\begin{equation}\label{chisum}
\sum_{\gamma\in \Gamma\cap U^+} \chi(\gamma u_0)=\sum_{\gamma\in \Gamma\cap U^+} ( \mathbf{1}_{\scrF}*\varphi)(\gamma u_0)= \int_{U^+}\left( \sum_{\gamma\in \Gamma\cap U^+}\mathbf{1}_{\scrF}(\gamma u_0 u^{-1})\right)\varphi(u)\,du=1,
\end{equation}
since $\sum_{\gamma\in \Gamma\cap U^+}\mathbf{1}_{\scrF}(\gamma u_0 u^{-1})=1$ for almost all $u\in U^+$. From this we see that
\begin{equation}\label{closedhoro}
\int_{U^+} \chi(u) \phi(\Gamma u g_t)\,du=\int_{\scrF} \left( \sum_{\gamma\in\Gamma\cap U^+}\chi(\gamma u)\right)\phi(\Gamma u g_t)\,du=\int_{\Gamma\cap U^+\backslash U^+} \phi(\Gamma u g_t)\,du
\end{equation}
for any $\phi\in V^{\infty}$. For $X\in\fu^+$, we have $X\chi=\mathbf{1}_{\scrF} * (X\varphi)$. Since $\varphi$ has compact support, $\int_{U^+}(X\varphi)(u)\,du=\int_{U^+} \left.\frac{d}{dt}\right|_{t=0} \varphi(u\exp(tX))\,du=\left.\frac{d}{dt}\right|_{t=0}\int_{U^+}  \varphi\,du=0$. By the same calculation as in \eqref{chisum}, $\sum_{\gamma\in \Gamma\cap U^+} (X\chi)(\gamma u_0)=0$, and hence
\begin{equation}\label{Xchi0}
\int_{U^+} (X\chi)(u)\phi(\Gamma u g_t)=0
\end{equation}
for any $\phi\in V^{\infty}$ and those $X\in \scrU(\fu^+_{\CC})$ with vanishing scalar component. The bounds discussed during the proof of Theorem \ref{maintheorem1} allow us to evaluate \eqref{AVGIDENT} at $x=\Gamma e$. Using \eqref{closedhoro} and \eqref{Xchi0}, we then have
\begin{equation*}
\int_{\Gamma\cap U^+\backslash U^+} f(\Gamma u g_t)\,du = \sum_{i=0}^{W} \int_{\Gamma\cap U^+\backslash U^+}(S_{\mathbf{0},i}(t)d\rho(Y^i)f)(\Gamma u)\,du.
\end{equation*}
Observe that if $V$ is an irreducible subrepresentation of $(\rho,L^2(\GaG)_0)$, each operator $S_{\mathbf{0},i}(t)$, $i=0,\ldots,W$, is of the form $S_{\mathbf{0},i}(t)=\varphi_i(t)\mathrm{Id}$ for some explicit function $\varphi_i:\RR_{\leq 0}\rightarrow \CC$ depending only on the representation $(\rho, V)$, $\eta$, and $g_{\RR}$. It should thus be possible to use the decomposition of $(\rho,L^2(\GaG)_0)$ to give an explicit formula for $\int_{\Gamma\cap U^+\backslash U^+} f(\Gamma u g_t)\,du$ similar to that of \cite[Theorem 1]{Sarnak}.
\end{remark}

\begin{remark}\label{L2remark}
By using the Cauchy-Schwarz inequality, we have 
\begin{equation*}
\int_{B} |(X_{\bk}\chi)(u)|\scrY_{\Gamma}(xug_s)\,du\leq \|X_{\bk}\chi\|_{L^2(B)}\left( \int_B \scrY_{\Gamma}^2(xug_s)\,du\right)^{1/2}.
\end{equation*} 
In the case that $G$ is an algebraic group of rank greater than or equal to two and $\Gamma$ is an arithmetic lattice in $G$, we may use Corollary \ref{scrYAVG2} to go from \eqref{AVGIDENT} to
\begin{align*}
\left|\int_{U^+}\chi(u)f(xug_{t})\,du\right|\ll&\!\!\!\sum_{\bk\in\bI_{k_0}}\!\!\!\|X_{\bk}\chi\|_{L^{2}(B)}\scrY_{\Gamma}(x)\\&\qquad\times\int_{-\infty}^0\!\! \!\|T_{\bk}(t,s)d\rho(U_{\bk})f\|_{\scrS^m(\GaG)}\big(\!1\!+\!\mathsf{dist}_{\GaG}(\Gamma e, x)\!+\!|s|\big)^{p/2}\,ds\\\notag&+\!\!\!\!\sum_{\bj\in\bI_{\leq k_0 -1}}\!\sum_{i=0}^{W} \|X_{\bj}\chi\|_{L^{2}(B)}\scrY_{\Gamma}(x)\\&\qquad\times\|S_{\bj,i}(t)d\rho(Y^{i}U_{\bj})f\|_{\scrS^m(\GaG)}\big(\!1\!+\!\mathsf{dist}_{\GaG}(\Gamma e, x)\big)^{p/2},
\end{align*}
instead of \eqref{final}. Now continuing with the proof of Theorem \ref{maintheorem1} as before, we may replace \eqref{mainthmbd} with
\begin{equation*}
\left| \int_{U^+} \chi(u)f(xug_{t}) \,du\right|\leq C \|\chi\|_{W^{m_2,2}(B)}\|f\|_{\scrS^{m_1}(\GaG)}\scrY_{\Gamma}(x)(1+\mathsf{dist}_{\GaG}(\Gamma e, x)\big)^{j_1}(1+|t|)^{j_2} e^{\eta t}
\end{equation*}
for some $j_1,\,j_2\geq 0$.
\end{remark}

\section{Uniform Bounds}\label{UNIFORMSEC}
The goal of this section is to prove Theorem \ref{maintheorem2}. Recall that we have fixed a parabolic subgroup $P=NAM$ (which, without loss of generality, we may assume to be standard) and a norm $\|\cdot\|$ on $\fa$ that controls the rate of decay of matrix coefficients of $\overline{A^+}$, i.e. there exists $q\geq0$ such that for all $K$-finite vectors $\vecu,\,\vecv\in V$ and $H\in \overline{\fa^+}$,
\begin{equation*}
|\langle \rho(\exp(H))\vecu,\vecv \rangle|\ll \|\vecu\|\|\vecv\|\left( \dim(\rho(K)\vecu)\dim(\rho(K)\vecv)\right)^{1/2} (1+\|H\|^q)e^{-\|H\|}.
\end{equation*} 
By Lemma \ref{DECOMPRATE}, the same type of bound holds for almost every irreducible representation $(\pi,\scrH)$ occurring in the direct integral decomposition of $V$. Furthermore, by Lemma \ref{SOBNORMS}, we may pass to a corresponding bound for the matrix coefficients of smooth vectors: for almost every $(\pi,\scrH)$, there exist $m\in\NN$, $C$, $q\geq0$ such that 
\begin{equation*}
| \langle \pi(\exp(H))\vecu,\vecv \rangle|\leq C \|\vecu\|_{\scrS^m{}(\scrH)}\|\vecv\|_{\scrS^{m}(\scrH)} (1+\|H\|^q)e^{-\|H\|}\qquad \forall \vecu,\,\vecv\in\scrH^{\infty},\,H\in \overline{\fa^+}. 
\end{equation*}
\subsection{The sets $\fa_{\epsilon}$}
In order to prove Theorem \ref{maintheorem2}, we first show that for every $\epsilon>0$, we can find an appropriately large set in $\overline{\fa^+}$ such that Theorem \ref{maintheorem1} holds \emph{uniformly} for all $g_{\RR}=\exp(\RR H)$, with $H$ in the given set. In order to do this, we must  make explicit the dependency on the subgroup $g_{\RR}$ in Section \ref{BURGERFORMULASEC}, in particular Proposition \ref{BurgerProp2}. We now form subsets
\begin{align*}
\overline{\fa^+_{\epsilon}}&=\left\lbrace H\in \overline{\fa^+}\,:\,\alpha(H)>0 \Rightarrow \alpha(H)\geq \epsilon\|H\|\qquad\forall \alpha\in\Sigma_0^+(\fg,\fa)\right\rbrace \\
\overline{\fa^+_{\epsilon,1}}&=\lbrace H\in\overline{\fa^+_{\epsilon}}\,:\,\|H\|=1\rbrace. 
\end{align*}
Observe that if $H\in\overline{\fa_{\epsilon}^+}$, then $\delta H\in\overline{\fa_{\epsilon}^+}$ for all $\delta\geq 0$. Also, if we denote $\Sigma_0^+(\fg,\fa)=\lbrace \alpha_1,\,\alpha_2,\ldots, \alpha_r\rbrace$, then $\overline{\fa^+_{\epsilon}}$ and $\overline{\fa^+_{\epsilon,1}}$ have at most $2^r$ connected components; namely
\begin{equation*}
\overline{\fa^+_{\epsilon}}=\bigcup_{\scrF\subset\Sigma_0^+(\fg,\fa)}\overline{\fa^+_{\epsilon,\scrF}},
\qquad\overline{\fa^+_{\epsilon,1}}=\bigcup_{\scrF\subset\Sigma_0^+(\fg,\fa)}\overline{\fa^+_{\epsilon,1,\scrF}},
\end{equation*}
where
\begin{equation*}
\overline{\fa^+_{\epsilon,\scrF}}=\left\lbrace H\in\fa\,:\, \alpha(H)\geq\epsilon\,\,\forall\alpha\in \Sigma_0^+(\fg,\fa)\setminus\scrF\,\,\mathrm{and}\, \,\alpha(H)=0\,\,\forall \alpha\in\scrF \right\rbrace,
\end{equation*}
and
\begin{equation*}
\overline{\fa^+_{\epsilon,1,\scrF}}=\left\lbrace H\in\overline{\fa^+_{\epsilon,\scrF}}\,:\, \|H\|=1\right\rbrace.
\end{equation*}
 It is easily seen that for any $H\in\overline{\fa^+}$, there exists $\epsilon_H>0$ such that $H\in \overline{\fa^+_{\epsilon}}$ for all $\epsilon\leq \epsilon_H$. It is the sets $\overline{\fa_{\epsilon,1}^+}$ on which Theorem \ref{maintheorem1} will be shown to hold uniformly. After proving that every element of $\overline{\fa^+}$ may be decomposed in a good way as the sum of an element of $\overline{\fa^+_{\epsilon}}$ and another element of $\overline{\fa^+}$, we will proceed to prove generalizations of the lemmas and propositions needed in the proof of Theorem \ref{maintheorem1}.
\begin{lem}\label{EPSILONH}
There exists a constant $c>0$ depending only on $G$, $\fa$, and $\|\cdot\|$ such that for any $\epsilon>0$ and $H\in\overline{\fa^+}$, there exists $H_{\epsilon}\in\overline{\fa_{\epsilon}^+}$ such that $J_{\epsilon}:=H-H_{\epsilon}\in\overline{\fa^+}$ and $\|H_{\epsilon}\|\geq (1-c\epsilon)\|H\|$.
\end{lem}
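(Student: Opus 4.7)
The plan is to decompose $H$ via the basis of $\fa$ dual to the simple roots and then split off the coordinates that are ``too small'' into $J_\epsilon$. Writing $\Sigma_0^+(\fg,\fa)=\{\alpha_1,\ldots,\alpha_r\}$, let $\{H_1,\ldots,H_r\}\subset\fa$ denote the dual basis satisfying $\alpha_i(H_j)=\delta_{ij}$. Every $H\in\overline{\fa^+}$ admits a unique expansion $H=\sum_{i=1}^r c_i H_i$ with $c_i=\alpha_i(H)\geq 0$, and, conversely, any linear combination of the $H_i$ with non-negative coefficients lies in $\overline{\fa^+}$.

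First I would fix a threshold $\delta=\delta(\epsilon,\|H\|)$ proportional to $\epsilon\|H\|$ and set
\[
S_\delta:=\{i\,:\,c_i<\delta\},\qquad H_\epsilon:=\sum_{i\notin S_\delta}c_iH_i,\qquad J_\epsilon:=H-H_\epsilon=\sum_{i\in S_\delta}c_iH_i.
\]
Both $H_\epsilon$ and $J_\epsilon$ then belong to $\overline{\fa^+}$ by construction. For $i\notin S_\delta$ we have $\alpha_i(H_\epsilon)=c_i\geq\delta$, while $\alpha_i(H_\epsilon)=0$ for $i\in S_\delta$; hence the membership $H_\epsilon\in\overline{\fa_\epsilon^+}$ will follow from the single inequality $\delta\geq\epsilon\|H_\epsilon\|$. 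To make this self-consistent, I would invoke norm equivalence on the finite-dimensional space $\fa$: for every subset $S\subseteq\{1,\ldots,r\}$ the coordinate projection $H\mapsto\sum_{i\notin S}\alpha_i(H)H_i$ is a fixed linear map on $\fa$, and there are only $2^r$ such maps. Letting $C_1$ denote the maximum of their operator norms with respect to $\|\cdot\|$, one obtains $\|H_\epsilon\|\leq C_1\|H\|$ uniformly in $H$ and in $\delta$, and then the choice $\delta:=C_1\epsilon\|H\|$ forces $\alpha_i(H_\epsilon)\geq\delta\geq\epsilon\|H_\epsilon\|$ for every $i\notin S_\delta$.

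The norm lower bound on $H_\epsilon$ will then reduce to a short triangle-inequality computation: using the bound $c_i<\delta$ for $i\in S_\delta$,
\[
\|J_\epsilon\|\leq\sum_{i\in S_\delta}c_i\|H_i\|\leq r\delta\max_j\|H_j\|=rC_1\max_j\|H_j\|\cdot\epsilon\|H\|,
\]
so that $\|H_\epsilon\|\geq\|H\|-\|J_\epsilon\|\geq(1-c\epsilon)\|H\|$ with $c:=rC_1\max_j\|H_j\|$, a constant depending only on $G$, $\fa$, and $\|\cdot\|$. I do not foresee any real obstacle; the only subtlety is that $S_\delta$ itself depends on $H$, which is why the argument bounds $\|H_\epsilon\|$ in terms of $\|H\|$ \emph{uniformly} over all possible outcomes of the partition before pinning down the threshold $\delta$.
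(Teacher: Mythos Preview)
Your argument is correct and follows essentially the same strategy as the paper: decompose $H$ in the basis dual to $\Sigma_0^+(\fg,\fa)$, discard the coordinates below a threshold $\delta\asymp\epsilon\|H\|$, and use norm equivalence on $\fa$ to verify both $H_\epsilon\in\overline{\fa_\epsilon^+}$ and $\|J_\epsilon\|\ll\epsilon\|H\|$. The paper organises the constants via the auxiliary norm $\|Y\|_\infty=\max_j|\alpha_j(Y)|$ and its equivalence constants $D,D'$, whereas you package them through the operator norms of the $2^r$ coordinate projections together with $\max_j\|H_j\|$; this is a purely cosmetic difference.
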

\begin{proof}
If $H\in \overline{\fa^+_{\epsilon}}$, then let $H_{\epsilon}=H$. We now assume otherwise, and let $\lbrace H^j\rbrace$ denote the basis of $\fa$ defined by $\alpha_i(H^j)=\delta_{ij}$ (recall that $\Sigma_0^+(\fg,\fa)=\lbrace \alpha_1,\ldots,\alpha_r\rbrace$); we may thus write (in a unique way) $H=\sum_j x_j H^j$ with all $x_j\geq 0$. Assume that $\epsilon$ is chosen small enough so that $H^j\in  \overline{\fa^+_{\epsilon}} $ for all $j=1,\ldots, r$. Let $\|\cdot\|_{\infty}$ be the $\ell^{\infty}$ norm on $\fa$ with respect to this basis, i.e.
\begin{equation*}
\|Y\|_{\infty}:=\max_j \lbrace|\alpha_j(Y)|\rbrace,\qquad \forall Y\in\fa.
\end{equation*}
By equivalence of norms, there exist $D\geq D'>0$ such that $D'\|Y\|_{\infty}\leq\|Y\|\leq D\|Y\|_{\infty}$ for all $Y\in\fa$. Given $\epsilon>0$, we set
$\delta=D\epsilon\|H\|_{\infty}>0$, $\scrF=\lbrace j\in\lbrace 1,\ldots,r\rbrace\,:\,\alpha_j(H)<\delta\rbrace$, and define
\begin{align*}
H_{\epsilon}&= \sum_{j\in \scrF^{\mathsf{c}}} x_j H^j \\J_{\epsilon}&= \sum_{j\in \scrF} x_j H^j. 
\end{align*}
Note that $J_{\epsilon}\in\overline{\fa^+}$. For each $j\in\scrF$, $\alpha_j(H_{\epsilon})=0$, and for each $j\in\scrF^{\mathsf{c}}$ we have
\begin{equation*}
\alpha_j(H_{\epsilon})=\alpha_{j}(H)\geq \delta=D\epsilon\|H\|_{\infty}\geq D\epsilon\|H_{\epsilon}\|_{\infty} \geq \epsilon \|H_{\epsilon}\|,
\end{equation*}
so $H_{\epsilon}\in \overline{\fa_{\epsilon}^+}$. Also,
\begin{equation*}
\|J_{\epsilon}\|\leq D\|J_{\epsilon}\|_{\infty}=D\max\lbrace x_j\,:\,j\in\scrF\rbrace<D\delta=D^2\epsilon\|H\|_{\infty}\leq\frac{D^2\epsilon}{D'}\|H\|,
\end{equation*}
so
\begin{equation*}
\|H_{\epsilon}\|\geq \|H\|-\|J_{\epsilon}\|\geq \left(1- \frac{D^2\epsilon}{D'}\right)\|H\|,
\end{equation*}
proving that $H_{\epsilon}$ and $J_{\epsilon}$ have the desired properties (with $c=\frac{D^2}{D'}$).  
\end{proof}

\begin{remark}\label{EPSremark}
Throughout the remainder of Section \ref{UNIFORMSEC}, we assume that $\epsilon$ is chosen to be small enough so that $\epsilon<\min\lbrace 1, (2c)^{-1}\rbrace$, with $c$ being as in Lemma \ref{EPSILONH}; this ensures that $\frac{\|J_{\epsilon}\|}{\|H_{\epsilon}\|} \leq \frac{c\epsilon \|H\|}{(1-c\epsilon)\|H\|}<1$.
\end{remark}

\subsection{Continuity of the Harish-Chandra Isomorphism}
 We now fix a restricted root basis $\lbrace X_{j}\rbrace_{j=1,\ldots,d}$ of $\fn$, with corresponding elements $\beta_j\in\fa^*$, $j=1,\ldots,d$ (so $\ad_HX_j=\beta_j(H)X_j$ for all $H\in\fa$). Note that there may be repetitions in the collection $\beta_1,\ldots,\beta_d$, and each $\beta_j$ may be written (in a unique way) as a linear combination of elements of $\Sigma^+_0(\fg,\fa)=\lbrace \alpha_1,\ldots,\alpha_r\rbrace$ with all coefficients being non-negative integers. Recalling the notation of the proof of Lemma \ref{LIEIDENT}, let $\fh_{\CC}=\fa_{\CC}\oplus\ft_{\CC}$ be a Cartan subalgebra of $\fg_{\CC}$ containing $\fa_0$, with Weyl group $\scrW=\scrW(\fg_{\CC},\fh_{\CC})$. Letting $W=|\scrW|$, by the Poincar\'e-Birkhoff-Witt theorem we may view $\scrU^{W}(\fg_{\CC})$ as a finite dimensional \emph{topological} vector space (over $\CC$). For notational purposes, given $H\in\fa$, we let $W_H$ denote the integer $|\scrW/\mathrm{Stab}_{\scrW}(H)|$.  We will now prove a version of Lemma \ref{LIEIDENT} which takes into account the topological structure on $\scrU^{W}(\fg_{\CC})$:
\begin{lem}\label{LIEIDENT2}
There exist families of maps $Z_i:\,\fa\rightarrow \Zg\cap \scrU^{W}(\fg_{\CC})$, $i=0,\,1,\ldots,W$, and $U_j:\,\fa\rightarrow \scrU^{W}(\fg_{\CC})$, $j=1,\ldots,d$ which satisfy 
\begin{equation}\label{HIDENT}
\sum_{i=0}^W Z_i(H)H^{i}=\sum_{j=1}^d X_jU_j(H)\qquad \forall H\in\fa,
\end{equation}
and additionally have the following properties:
\begin{enumerate}[i)]
\item $Z_{W_H}(H)=1$ and $Z_i(H)=0\quad\forall H\in \fa,\, i>W_H$.
\item $U_j(H)=0$ $\forall H\in \overline{\fa^+}\cap \ker\beta_j$. 
\item All the maps $U_1,\ldots, U_d$, $Z_0,\ldots,Z_W$ 
are continuous when restricted to any set $\overline{\fa^+_{\epsilon,\scrF}}$.
\end{enumerate}
\end{lem}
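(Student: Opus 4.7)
The plan is to define $Z_i(H)$ intrinsically (independently of the choice of parabolic) using the polynomial $p_H(x)$ from the proof of Lemma \ref{LIEIDENT}, and to define $U_j(H)$ piecewise on the sets $\overline{\fa^+_{\epsilon,\scrF}}$ via a PBW decomposition adapted to a refined parabolic. Explicitly, for each $H\in\fa$ I would write $p_H(x)=\sum_{i=0}^{W_H}J_i(H)x^i$ with $J_{W_H}(H)=1$ and $J_i(H)\in\Uh^\scrW$, and set $Z_i(H):=\gamma^{-1}(J_i(H))$ for $0\leq i\leq W_H$ and $Z_i(H):=0$ for $W_H<i\leq W$. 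Property \textit{i)} is then immediate, and the degree bound $Z_i(H)\in\scrU^W(\fg_\CC)$ follows from $\deg J_i(H)\leq W_H-i\leq W$ (viewing $J_i(H)$ as an element of $\mathrm{Sym}(\fh_\CC)$) combined with the fact that the Harish-Chandra isomorphism preserves the canonical filtration.

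For continuity of the $Z_i$ on $\overline{\fa^+_{\epsilon,\scrF}}$, I would first verify that $\Stab_\scrW(H)$ is constant on this set: a positive root $\lambda\in\Delta^+(\fg_\CC,\fh_\CC)$ restricts to $\fa$ as either zero or a nonnegative integer combination $\sum c_\alpha \alpha$ of simple restricted roots, and for $H\in\overline{\fa^+_{\epsilon,\scrF}}$ one has $\lambda(H)=\sum_{\alpha\notin\scrF}c_\alpha\,\alpha(H)\geq\epsilon\sum_{\alpha\notin\scrF}c_\alpha$, which vanishes if and only if $\lambda|_\fa$ is supported on $\scrF$---a condition depending only on $\scrF$. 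Since $\Stab_\scrW(H)$ is generated by the reflections in such vanishing roots, it is constant on $\overline{\fa^+_{\epsilon,\scrF}}$, and hence $W_H$ is constant (call it $W_\scrF$). The coefficients $J_i(H)$ of the fixed-length product defining $p_H$ then depend continuously on $H$ as elements of a finite-dimensional subspace of $\Uh$, and applying $\gamma^{-1}$ (continuous and linear on finite-dimensional filtration pieces) delivers continuity of $Z_i$.

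For $U_j(H)$, the idea is that when $H\in\overline{\fa^+_{\epsilon,\scrF}}\subset\fa_\scrF$, re-running the argument of Lemma \ref{LIEIDENT} with the parabolic $P_\scrF\supset P$ in place of $P$ yields the stronger conclusion $\sum_{i=0}^W Z_i(H)H^i\in\fn_\scrF\,\Ug$, where $\fn_\scrF:=\mathrm{span}\{X_j\col\beta_j|_{\fa_\scrF}\neq 0\}\subsetneq\fn$. Fixing a PBW basis of $\fg_\CC$ with $\{X_j\in\fn_\scrF\}$ ordered first, there is a unique linear PBW decomposition $\sum Z_i(H)H^i=\sum_{X_j\in\fn_\scrF}X_j U_j(H)$; I then set $U_j(H):=0$ for $X_j\notin\fn_\scrF$. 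Property \textit{ii)} is automatic, since on $\overline{\fa^+_{\epsilon,\scrF}}$ we have $\beta_j(H)=0$ exactly when $X_j\notin\fn_\scrF$, and continuity of $U_j$ on this set follows from the continuity of $Z_i$ together with the linearity of the PBW decomposition. Noting that the sets $\overline{\fa^+_{\epsilon,\scrF}}$ are pairwise disjoint---since $\scrF$ is uniquely recovered from $H$ as $\{\alpha\in\Sigma_0^+(\fg,\fa)\col\alpha(H)=0\}$---the piecewise definition is unambiguous, and for $H$ outside $\bigcup_\scrF\overline{\fa^+_{\epsilon,\scrF}}$ one may define $U_j(H)$ via the PBW decomposition with respect to $\fn$ itself.

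The main obstacle will be the compatibility claim at the heart of the previous step: one must check that the $Z_i(H)$ defined intrinsically from $p_H$ and $\gamma$ coincide with those produced by the argument of Lemma \ref{LIEIDENT} when applied with the parabolic $P_\scrF$ in place of $P$; only then does the sharpening $\fn\,\Ug\rightsquigarrow\fn_\scrF\,\Ug$ deliver property \textit{ii)}. This compatibility reduces to verifying that $\fh_\CC$ and $\Delta^+(\fg_\CC,\fh_\CC)$ can be chosen uniformly so as to simultaneously refine both the $P$- and $P_\scrF$-orderings on restricted roots, and that $\rho_{\fa_\scrF}=\rho_\fa|_{\fa_\scrF}$ (Section \ref{Struc}), so that $\delta_{\fh_\CC}$, $\gamma$, and hence the entire polynomial $p_H$ are unaffected by the passage from $P$ to $P_\scrF$.
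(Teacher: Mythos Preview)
Your proposal is correct and follows essentially the same route as the paper: define $Z_i(H)$ via the coefficients of $p_H$ pulled back through the Harish-Chandra isomorphism (using that $\gamma$ respects the filtration), observe that $\Stab_{\scrW}(H)$---and hence $W_H$ and the whole construction---is constant on each $\overline{\fa^+_{\epsilon,\scrF}}$ (the paper cites Chevalley's lemma for this, which is exactly your ``generated by reflections in vanishing roots'' step), and then obtain the $U_j(H)$ by re-running Lemma~\ref{LIEIDENT} with the larger parabolic $P_\scrF\supset P$ and reading off PBW coordinates adapted to $\fn_\scrF$. Your identification of the ``main obstacle''---that the $Z_i(H)$ produced via $P$ and via $P_\scrF$ must coincide---is precisely the point the paper handles by noting that $\fa_\scrF\subset\fa\subset\fh$ and $\fn_\scrF\subset\fn$, so that the polynomial $p_H$ and the Harish-Chandra map are the same in both constructions.

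One small repair: your fallback definition of $U_j(H)$ ``via the PBW decomposition with respect to $\fn$ itself'' for $H$ outside $\bigcup_\scrF\overline{\fa^+_{\epsilon,\scrF}}$ is $\epsilon$-dependent and can violate property~\textit{ii)}. For instance, if $\alpha_1(H)=0$ but $0<\alpha_2(H)<\epsilon\|H\|$, then $H\in\overline{\fa^+}\cap\ker\alpha_1$ lies in no $\overline{\fa^+_{\epsilon,\scrF}}$, yet decomposing in $\fn\,\Ug$ need not force $U_j(H)=0$ for those $j$ with $\beta_j$ supported on $\alpha_1$. The fix---which you essentially already have, since you note that $\scrF$ is recovered intrinsically from $H$---is to use the intrinsic vanishing set $\scrF(H)=\{\alpha\in\Sigma_0^+(\fg,\fa):\alpha(H)=0\}$ for \emph{every} $H\in\overline{\fa^+}$, not just those lying in some $\overline{\fa^+_{\epsilon,\scrF}}$; this is what the paper does (phrasing it as the choice of the maximal $\widetilde{\scrF}\subset\Sigma_0^+(\fg,\fa_0)$ with $H\in\fa_{\widetilde{\scrF}}$).
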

\begin{proof}
The construction in the proof Lemma \ref{LIEIDENT} gives, for any given $H\in \fa$, elements $Z_{H,j}$ in $\Zg$ such that
\begin{equation}\label{ZHPOLY}
H^{W_H}+Z_{H,W-1}H^{W_H-1}+\ldots+ Z_{H,1}H +Z_{H,0}=\fn \,\Ug.
\end{equation}
We thus define the functions $Z_i$ on $\fa$ in accordance with these constructions: 
\begin{equation*}
Z_i(H)=\begin{cases}Z_{H,i}\quad&\mathrm{if}\:i<W_H-1\\1\quad&\mathrm{if}\:i=W_H\\0\quad& \mathrm{if}\: i>W_H-1.\end{cases}
\end{equation*}  
The fact that the Harish-Chandra isomorphism respects the canonical filtrations on $\Zg$ and $\Uh^{\scrW}$, as is clear from the proof of \cite[Theorem 5.44]{Knapp2}, gives that $Z_i(H)\in\scrU^{W}(\fg_{\CC})$ for all $H\in\fa$, $i=0,\ldots, W$. Using \eqref{ZHPOLY}, we have that there exist elements $U_{j,H}\in \scrU^W(\fg_{\CC})$ such that 
\begin{equation*}
\sum_{i=0}^W Z_i(H)H^{i}=\sum_{j=1}^d X_jU_{j,H},
\end{equation*}
and the functions $Z_i$ satisfy \textit{i)}. We thus need to use the elements $U_{j,H}$ to define functions $U_j$ that satisfy \eqref{HIDENT}, \textit{ii)} and \textit{iii)}. To any $H\in\overline{\fa^+}$ we may associate a set $\scrF\subset\Sigma_0^+(\fg,\fa)$ consisting of all $\alpha\in \Sigma_0^+(\fg,\fa)$ such that $\alpha(H)=0$. Every $\beta\in\Sigma^+(\fg,\fa) $ for which $\beta(H)=0$ may be thus be written as a linear combination consisting solely of elements of $\scrF$ (since each $\beta\in\Sigma^+(\fg,\fa)$ may be written in a unique way as a linear combination of elements of $\Sigma^+_0(\fg,\fa)$ with all coefficients being non-negative integers). We may now construct a standard parabolic subgroup $P_{\scrF}=N_{\scrF}A_{\scrF}M_{\scrF}\supset P$, with Lie algebra $\fn_{\scrF}\oplus\fa_{\scrF}\oplus\fm_{\scrF}$ such that $H\in \fa_{\scrF}^+$ and a basis for $\fn_{\scrF}$ is given by the elements $X_j$ for which $\beta_j(H)\neq 0$. Since $\fa_{\scrF}\subset\fa\subset \fh_{\CC}$ and $\fn_{\scrF}\subset \fn$, applying the construction in the proof of Lemma \ref{LIEIDENT} to $H$ with $P_{\scrF}$ in place of $P$ gives rise to the same elements $Z_{H,i}$, and so
\begin{equation*}
\sum_{i=0}^W Z_i(H)H^{i}=\sum_{j=1}^d X_jU_{j,H,\scrF}\in \fn_{\scrF}\Ug,
\end{equation*}
where $U_{j,H,\scrF}=0$ for all $j$ such that $X_j\not\in\fn_{\scrF}$, i.e. those $X_j$ such that $\beta_j(H)=0$. We thus construct functions $U_j$ in the following manner: for each subset $\widetilde{\scrF}\subset \Sigma_0^+(\fg,\fa_0)$, we choose a Poincar\'e-Birkhoff-Witt basis of $\fg$ with the first elements of this basis being the $X_j$s that form a basis of $\fn_{\widetilde{\scrF}}$. Then, for each $H\in\fa$, we choose the \emph{unique} maximal subset $\widetilde{\scrF}\subset \Sigma_0^+(\fg,\fa_0)$ such that $H\in\fa_{\widetilde{\scrF}}$. We then have that 
\begin{equation*}
\sum_{i=0}^W Z_i(H)H^{i}=\sum_{j=1}^d X_jU_{j,H,\widetilde{\scrF}}\in \fn_{\widetilde{\scrF}}\Ug,
\end{equation*}
where the elements $U_{j,H,\widetilde{\scrF}}$ are uniquely determined by the choice of basis, and $U_{j,H,\widetilde{\scrF}}=0$ if $X_j\not\in \fn_{\widetilde{\scrF}}$. The functions $U_j$ are then defined by $U_j(H)=U_{j,H,\widetilde{\scrF}}$. This choice of functions $U_j$ will satisfy \textit{ii)}, and (together with the functions $Z_i$) \eqref{HIDENT}.

Finally, in order to prove \textit{iii)}, we first note that if $H_1$ and $H_2$ both belong to some set $\fa^+_{\scrF}$, then $\Stab_{\scrW}(H_1)=\Stab_{\scrW}(H_2)$, indeed, by Chevalley's lemma \cite[Proposition 2.72]{Knapp2}, this set is determined by the choice of $\scrF\subset\Sigma_0^{+}(\fg,\fa)$. This implies that the coefficients of the polynomial \eqref{HPOLY} depend continuously on $H$ as $H$ is allowed to vary in $\fa_{\scrF}^+$. More precisely, if
\begin{equation}\label{HPOLY2}
p_H(x)=\prod_{w\in\scrW/\mathrm{Stab}_{\scrW}(H)} \big(x-(wH+\delta_{\fh_{\CC}}(H)1)\big)=x^{W_H}+\sum_{i=0}^{W_H-1}J_{H,i}x^{i},\end{equation}
then the maps $H\mapsto J_{H,i}$ are continuous maps from $\fa_{\scrF}^+$ to $\Uh^{\scrW}$. Since the Harish-Chandra isomorphism $\gamma|_{\Zg\cap\scrU^{W}(\fg_{\CC})}$ is an invertible linear map between two subspaces of $\scrU^{W}(\fg_{\CC})$, $\gamma|_{\Zg\cap\scrU^{W}(\fg_{\CC})}^{-1}$ is a continuous linear map from $\scrU^{W}(\fh_{\CC})^{\scrW}$ to $\Zg \cap\scrU^{W}(\fg_{\CC})$. Applying this to the right-hand side of \eqref{HPOLY2} gives that for each $i$, $H\mapsto \gamma^{-1}(J_{H,i})=Z_i(H)$ is continuous on $\fa^+_{\scrF}$. Since the sum $\sum_{i=0}^W Z_i(H)H^{i}$ depends continuously on $H\in\fa^+_{\scrF}$, $\sum_{j=1}^d X_j U_j(H)$ must as well. Now using the fact that the coordinates in our Poincar\'e-Birkhoff-Witt basis are continuous functions on $\scrU^W(\fg_{\CC})$, we conclude that the maps $U_j|_{\fa^+_{\scrF}}$ are also continuous.
\end{proof}
\subsection{Differential Equations}\label{HDIFFSEC} Following Section \ref{DIFFEQSSEC}, for an irreducible unitary representation $(\pi,\scrH)$ for which $\|\cdot\|$ controls the rate of decay of matrix coefficients of $\overline{A^+}$, we define
\begin{equation*}
\scrI_{\vecv}^{\chi}(H,t)=\int_N \chi(n)\pi\big(n\exp(tH)\big)\vecv\,dn,
\end{equation*}
where $\vecv\in\scrH^{\infty}$, $\chi\in C_c^{\infty}(N)$, $t\leq 0$ and $H\in\overline{\fa^+}$. Furthermore (as previously), by Schur's lemma, there exist functions $a_i:\fa\rightarrow\CC$ such that $d\pi(Z_i(H))\vecw=a_i(H)\vecw$ for all $\vecw\in \scrH^{\infty}$ , $i=0,\ldots,W$ ($Z_i(H)$ being as in Lemma \ref{LIEIDENT2}). From Lemma \ref{LIEIDENT2} we have
\begin{equation*}
\sum_{i=0}^W a_i(H)d\pi(H^{i})\vecw=\sum_{j=1}^d d\pi(X_jU_j(H))\vecw\qquad\forall \vecw\in\scrH^{\infty}.
\end{equation*}
Note that $a_{W_H}(H)=0$ and $a_i(H)=0$ for $i>W_H$. Similarly to Section \ref{BURGERFORMULASEC}, for each $H\in\fa$ we let $\underline{\lambda}(H)$ denote the multi-set (of order $W_H$) of roots to the polynomial
\begin{equation*}
z^{W_H}+a_{W_H-1}(H)z^{W_H-1}+\ldots+a_1(H)z+a_0(H).
\end{equation*}
Analogously to \eqref{MAINODE}, we have 
\begin{equation}\label{IHODE}
\left( \prod_{\lambda\in\underline{\lambda}(H)}(\sfrac{d}{dt}-\lambda)\right)\scrI_{\vecv}^{\chi}(H,t)=\sum_{j=1}^d -e^{\beta_j(H)t}\scrI_{d\pi(U_j(H))\vecv}^{X_j\chi}(H,t).
\end{equation}
By Lemma \ref{LIEIDENT2}, the right-hand side of this equation satisfies a bound $\ll e^{\alpha_0(H)t}$ for $t\leq 0$, where
\begin{equation*}
\alpha_0(H)=\min\lbrace \alpha(H)\,:\,\alpha\in\Sigma_0^+(\fg,\fa)\,\,\mathrm{such\,\,that}\,\, \alpha(H)>0\rbrace.
\end{equation*} 
This bound will enable us to use Lemma \ref{gint lem} and Proposition \ref{BurgerProp1} uniformly on the sets $\overline{\fa^+_{\epsilon,1}}$. Letting $|\underline{\lambda}(H)|_{\infty}=\max_{\lambda\in\underline{\lambda}(H)}|\lambda|$, we have the following lemma, which gives a ``uniform'' version of \eqref{lambdaSob}:
\begin{lem}\label{UNIFORMLAMBDASOB}
There exists a function $\Psi:\fa\rightarrow \RR_{>0}$ such that for all $H\in \fa$, $\vecw\in\scrH^{\infty}$, $m\in\NN$,
\begin{equation*}
|\underline{\lambda}(H)|_{\infty}^{W_H}\|\vecw\|_{\scrS^m(\scrH)}\ll_m \Psi(H)\|\vecw\|_{\scrS^{2W+m}(\scrH)}. 
\end{equation*}
Moreover, $\Psi$ is continuous on each connected component of $\overline{\fa^+_{\epsilon}}$.
\end{lem}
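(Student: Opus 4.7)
The plan is to combine Cauchy's bound on polynomial roots with a Poincar\'e-Birkhoff-Witt type estimate on the action of $\Zg$ in Sobolev norms, together with the continuity already produced by Lemma \ref{LIEIDENT2}. Writing $p_H(z) = z^{W_H} + \sum_{i=0}^{W_H-1} a_i(H)\, z^i$ for the polynomial whose root multi-set is $\underline{\lambda}(H)$, Cauchy's bound gives $|\lambda| \leq 1 + \max_i |a_i(H)|$ for every root $\lambda$, and raising to the $W_H$-th power then expanding produces
\[
|\underline{\lambda}(H)|_{\infty}^{W_H} \ll_W 1 + \max_{0\leq i < W_H} |a_i(H)|^{W_H}.
\]

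To convert this into a bound uniform in the representation $(\pi,\scrH)$, I would use that $Z_i(H) \in \Zg$ acts on $\scrH^{\infty}$ as $a_i(H)\cdot\mathrm{Id}$, so $d\pi(Z_i(H)^{W_H}) = a_i(H)^{W_H}\,\mathrm{Id}$, whence
\[
|a_i(H)|^{W_H}\|\vecw\|_{\scrS^m(\scrH)} = \|d\pi(Z_i(H)^{W_H})\vecw\|_{\scrS^m(\scrH)}
\]
for every $\vecw \in \scrH^{\infty}$. Expanding $Z_i(H)^{W_H}$ in a fixed Poincar\'e-Birkhoff-Witt basis of $\scrU^{W\cdot W_H}(\fg_{\CC})$ and invoking the standard Sobolev estimate $\|d\pi(U)\vecw\|_{\scrS^m(\scrH)} \ll \|U\|_{\mathrm{PBW}}\|\vecw\|_{\scrS^{m+\deg U}(\scrH)}$ bounds the right-hand side by $\|Z_i(H)^{W_H}\|_{\mathrm{PBW}}\|\vecw\|_{\scrS^{m+W\cdot W_H}(\scrH)}$, where $\|\cdot\|_{\mathrm{PBW}}$ denotes the $\ell^1$-norm of the PBW coefficients. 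I would then define
\[
\Psi(H) := 1 + \max_{0\leq i < W_H} \|Z_i(H)^{W_H}\|_{\mathrm{PBW}}.
\]
(This natural argument yields the Sobolev loss $W\cdot W_H$; matching the stated index $2W$ requires either a sharper root bound exploiting the special structure of $p_H$ on each component $\overline{\fa^+_{\epsilon,\scrF}}$, or simply enlarging the index used downstream in Theorem \ref{maintheorem2}.)

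The remaining task is continuity of $\Psi$ on each connected component of $\overline{\fa^+_{\epsilon}}$. By Lemma \ref{LIEIDENT2} \textit{iii)}, the restriction of each $Z_i$ to $\overline{\fa^+_{\epsilon,\scrF}}$ is continuous into the finite-dimensional space $\Zg \cap \scrU^W(\fg_{\CC})$. By Chevalley's lemma---invoked in the proof of Lemma \ref{LIEIDENT2}---$\mathrm{Stab}_{\scrW}(H)$, and hence $W_H$, is constant on each such component. Multiplication in $\Ug$ and extraction of PBW coordinates are continuous operations on the relevant finite-dimensional subspaces, so $H \mapsto \|Z_i(H)^{W_H}\|_{\mathrm{PBW}}$ is continuous on $\overline{\fa^+_{\epsilon,\scrF}}$, and therefore so is $\Psi$.

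The main obstacle is that $|\underline{\lambda}(H)|_{\infty}^{W_H}$ depends on $(\pi,\scrH)$ through the central characters $a_i(H)$, yet the lemma demands a bound that is uniform across all irreducibles appearing in the direct integral decomposition of $(\rho,V)$. Passing through the $d\pi$-action on $\Ug$ and the PBW estimate is precisely what produces a representation-independent function $\Psi(H)$, at the cost of a controlled loss in the Sobolev index; the delicate point is therefore not the inequality itself but ensuring that the function $\Psi$ so produced depends only on $H$ (and the fixed basis of $\fg$) and inherits continuity from the continuity of the $Z_i$ established in Lemma \ref{LIEIDENT2}.
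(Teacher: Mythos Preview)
Your overall architecture matches the paper's: bound the roots of $p_H$ in terms of its coefficients, replace $|a_i(H)|^k$ by $\|d\pi(Z_i(H)^k)\vecw\|$, and then estimate by a Sobolev norm using that $Z_i(H)$ lives in a finite-degree piece of $\Ug$; continuity on each $\overline{\fa^+_{\epsilon,\scrF}}$ then follows from Lemma~\ref{LIEIDENT2} and constancy of $W_H$ there. The divergence is exactly at the point you flag, and the paper resolves it with two sharper inputs you did not use. First, instead of Cauchy's bound the paper invokes Fujiwara's bound, which gives
\[
|\underline{\lambda}(H)|_{\infty}^{W_H}\ll \max_{0\le i<W_H}|a_i(H)|^{\,W_H/(W_H-i)},
\]
so the relevant power of $Z_i(H)$ is $\lceil W_H/(W_H-i)\rceil$ rather than $W_H$. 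Second, the paper uses the finer filtration information $Z_i(H)\in\scrU^{\,W_H-i}(\fg_{\CC})$ (not merely $\scrU^{W}(\fg_{\CC})$), which holds because the Harish-Chandra isomorphism preserves the canonical filtrations and the coefficient $J_{H,i}$ of $p_H$ has degree $W_H-i$ in $\Uh$. Combining these, $Z_i(H)^{\lceil W_H/(W_H-i)\rceil}\in\scrU^{\,2W_H-i}(\fg_{\CC})\subset\scrU^{2W}(\fg_{\CC})$, which is precisely the index $2W$ in the statement. Your argument with Cauchy and the cruder degree $W$ yields a valid $\Psi$ but with Sobolev loss $W\cdot W_H$; the two refinements above are what buy the sharp $2W$.
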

\begin{proof}
Arguing as in the proof of Corollary \ref{fbd}, by Fujiwara's bound \cite{Fujiwara}, we have
\begin{align*}
|\underline{\lambda}(H)|_{\infty}^{W_H}\|\vecw\|_{\scrS^m(\scrH)}\ll& \max_{i=0,\ldots, W_H-1} |a_i(H)|^{W_H/(W_H-i)} \|\vecw\|_{\scrS^m(\scrH)}\\&\ll \max_{i=0,\ldots, W_H-1} \|\vecw\|_{\scrS^m(\scrH)}+ \left\|d\pi\left(Z_{i}(H)^{\lceil W_H/(W_H-i)\rceil} \right)\vecw\right\|_{\scrS^m(\scrH)}.
\end{align*}
Once again using the fact that the Harish-Chandra isomorphism preserves the canonical filtrations, we have $Z_i(H)\in \scrU^{W_H-i}(\fg_{\CC})$ (cf. \eqref{HPOLY2}), hence $Z_{i}(H)^{\lceil W_H/(W_H-i)\rceil}\in \scrU^{2W_H-i}(\fg_{\CC})\subset \scrU^{2W}(\fg_{\CC})$. The continuity (on each connected component of $\overline{\fa^+_{\epsilon}}$) of the maps $H\mapsto Z_i(H)$ completes the proof. 
\end{proof}
\subsection{Burger's Formula} The aim of this section is to make Proposition \ref{BurgerProp2} uniform on sets $\overline{\fa^+_{\epsilon,1}}$. We adapt the multi-index notation introduced prior to Proposition \ref{BurgerProp2} to the present situation: for any multi-index $\bj=(j_1,j_2,\ldots,j_l)$, we define $X_{\bj}=X_{j_1}X_{j_2}\ldots X_{j_l}$ and $U_{\bj}(H)=U_{j_1}(H)U_{j_2}(H)\ldots U_{j_l}(H)$, where $X_j$ and $U_j(H)$ are as in Lemma \ref{LIEIDENT2}. We may now state a version of Proposition \ref{BurgerProp2} that is uniform over the sets $\overline{\fa^+_{\epsilon,1}}$:
\begin{prop}\label{BurgerProp3}
Given an irreducible unitary representation $(\pi,\scrH)$, a norm $\|\cdot\|$ on $\fa^+$ that controls the matrix coefficients of $\overline{A^+}$ in $(\pi,\scrH)$, and $\epsilon>0$, there exist functions $\scrC_{\bk}:\overline{\fa^+_{\epsilon,1}}\times\RR_{\leq0}\times \RR_{\leq 0}\rightarrow\CC$ ($\,\bk\in\bI_{\lfloor \frac{2}{\epsilon}\rfloor}$) and $\scrD_{\bj,i}:\overline{\fa^+_{\epsilon,1}}\times\RR_{\leq0}\rightarrow\CC$ ($\,\bj\in \bI_{\leq \lfloor \frac{2}{\epsilon}\rfloor-1}$ and $i\in\lbrace 0,\ldots, W\rbrace $) such that for all $\vecv\in\scrH^{\infty}$, $\chi\in C_c^{\infty}(N)$, $H\in\overline{\fa^+_{\epsilon,1}}$, and $t\leq0$:
\begin{equation*}
\scrI_{\vecv}^{\chi}(H,t)\!=\!\!\sum_{\bk\in\bI_{\lfloor \frac{2}{\epsilon}\rfloor}}\!\!\int_{-\infty}^0\!\! \scrC_{\bk}(H,t,s)\scrI_{d\pi(U_{\bk}(H))\vecv}^{X_{\bk}\chi}(H,s)\,ds+\!\!\!\!\sum_{\bj\in\bI_{\leq \lfloor \frac{2}{\epsilon}\rfloor-1}}\sum_{i=0}^{W} \scrD_{\bj,i}(H,t)\scrI_{d\pi(H^{i}U_{\bj}(H))\vecv}^{X_{\bj}\chi}(H,0).
\end{equation*}
Furthermore, the following hold:\vspace{2pt}
\begin{enumerate}
\item $|\scrC_{\bk}(H,t,s)|\ll(1+|t|^{W})e^{ t+\frac{\epsilon}{5}s}$, where the implied constant depends only on $G$, $\|\cdot\|$, and $\epsilon$.
\item For any $m\in\NN$,
\begin{equation*}
|\scrD_{\bj,i}(H,t)|\ll(1+|t|^{W}) e^{ t}\inf_{\vecw\in\scrH^{\infty}\setminus\lbrace \mathbf{0} \rbrace} \frac{\|\vecw\|_{\scrS^{2W+m}(\scrH)}}{\|\vecw\|_{\scrS^{m}(\scrH)}},
\end{equation*}
where the implied constant depends only on $G$, $m$, $\|\cdot\|$, and $\epsilon$.
\end{enumerate}
\end{prop}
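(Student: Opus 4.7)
The plan is to mimic the proof of Proposition~\ref{BurgerProp2} line by line, but with $Y$ replaced by $H \in \overline{\fa^+_{\epsilon,1}}$, $g_t$ replaced by $\exp(tH)$, $\alpha_j$ replaced by $\beta_j(H)$, and the constants $\lbrace Z_i, U_j \rbrace$ replaced by the $H$-dependent families from Lemma~\ref{LIEIDENT2}. The starting input is the differential equation~\eqref{IHODE}. Two observations make the iterative argument applicable with \emph{uniform} parameters: first, $\eta = \|H\| = 1$ is a rate of decay for the matrix coefficients of $\exp(\RR H)$, uniformly over $H \in \overline{\fa^+_{\epsilon,1}}$, so Lemma~\ref{SOBNORMS}~(ii) supplies the bound \eqref{EPSILONBD} with $\eta = 1$; second, by Lemma~\ref{LIEIDENT2}~(ii) only those $j$ with $\beta_j(H) > 0$ contribute to the right-hand side of~\eqref{IHODE}, and for such $j$ we have $\beta_j(H) \geq \alpha_0(H) \geq \epsilon$ since each $\beta_j$ is a non-negative integer combination of the simple roots and $H \in \overline{\fa^+_{\epsilon}}$. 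Hence Corollary~\ref{fbd}, Lemma~\ref{gint lem}, and Proposition~\ref{BurgerProp1} all apply uniformly with parameters $\eta = 1$ and $\alpha = \epsilon$; the number of iterations that produces a gain of $2\eta$ in the exponent is $k_0 = \lfloor 2/\epsilon\rfloor$, matching the multi-index range in the statement.

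First I would formally carry out the iteration of Proposition~\ref{BurgerProp1}: beginning from~\eqref{IHODE}, one applies Proposition~\ref{BurgerProp1} at each stage to the integrals arising as right-hand sides after differentiating $\scrI_{d\pi(U_{\bk}(H))\vecv}^{X_{\bk}\chi}(H,t)$ for multi-indices $\bk$ of length $< k_0$, exactly as in the induction \eqref{INDUCTIONHYP}. Each such application requires a bound of the form $\|\psi(t)\| \ll e^{\gamma t}$ on the right-hand side with $\gamma$ growing by $\alpha = \epsilon$ at each step and $\beta_{\gamma}$ chosen below both $\gamma$ and $\eta = 1$; the key exponent identity $\delta_{k_0-1} + \epsilon < 1$ is built into the definition of $k_0$. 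The coefficients $\scrC_{\bk}(H,t,s)$ and $\scrD_{\bj,i}(H,t)$ emerge as iterated integrals of products of the kernels $F(\underline{\lambda}(H), \cdot, \cdot, \cdot)$ and $F_i(\underline{\lambda}(H), \cdot, \cdot)$ from Proposition~\ref{BurgerProp1}. The bounds (1) and (2) in the conclusion follow by inserting Proposition~\ref{BurgerProp1}~(ii), (iii) into these iterated integrals and using the uniform replacement for~\eqref{lambdaSob} provided by Lemma~\ref{UNIFORMLAMBDASOB}, which gives $(1 + |\underline{\lambda}(H)|_{\infty}^{W})\|\vecw\|_{\scrS^m(\scrH)} \ll \Psi(H)\|\vecw\|_{\scrS^{2W+m}(\scrH)}$.

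The main obstacle, and the place where the proof genuinely departs from Proposition~\ref{BurgerProp2}, is ensuring that the implied constants depend only on $G$, $\|\cdot\|$, and $\epsilon$ (and $m$), not on the particular $H$. This is where the two uniformity statements of Section~\ref{HDIFFSEC} are essential: Lemma~\ref{LIEIDENT2}~(iii) gives that $H \mapsto Z_i(H)$ and $H \mapsto U_j(H)$ are continuous on each connected component $\overline{\fa^+_{\epsilon,1,\scrF}}$, and Lemma~\ref{UNIFORMLAMBDASOB} gives a function $\Psi$ which is continuous on each such component. Since $\overline{\fa^+_{\epsilon,1}}$ is a finite union of compact pieces $\overline{\fa^+_{\epsilon,1,\scrF}}$ (being closed in the compact unit sphere of $\|\cdot\|$), $\Psi(H)$ and the norms of the operators $d\pi(Z_i(H))$, $d\pi(U_j(H))$ acting on Sobolev spaces are bounded uniformly in $H$ on this set. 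One has to verify that the factors $e^{\beta_j(H) t}$ arising in the iteration can all be uniformly dominated by $e^{\epsilon t}$ for $t \leq 0$, which is immediate from $\beta_j(H) \geq \epsilon$ whenever the term does not vanish.

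Finally, with all these uniform inputs in hand, tracking the bounds through the iteration is mechanical: the kernels satisfy $|\scrC_{\bk}(H,t,s)| \ll (1+|t|^W)e^{t + \frac{\epsilon}{5}s}$ and $|\scrD_{\bj,i}(H,t)| \ll (1+|t|^W)e^{t}\inf_{\vecw}\frac{\|\vecw\|_{\scrS^{2W+m}(\scrH)}}{\|\vecw\|_{\scrS^m(\scrH)}}$ with constants depending only on the permitted data. The factor $\frac{\epsilon}{5}$ (rather than $\frac{\epsilon}{2}$) in (1) accommodates the small losses $\epsilon'$ from Corollary~\ref{fbd} introduced at each of the $\lfloor 2/\epsilon\rfloor$ iteration steps, exactly as in Proposition~\ref{BurgerProp2}.
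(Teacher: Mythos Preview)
Your proposal is correct and follows essentially the same approach as the paper: run the proof of Proposition~\ref{BurgerProp2} with $\alpha=\epsilon$ and $\eta=1$, noting that Lemma~\ref{LIEIDENT2}~(ii) makes the argument go through even when $H$ lies on a wall of $\overline{\fa^+}$, and replace \eqref{lambdaSob} by Lemma~\ref{UNIFORMLAMBDASOB} to get uniformity in $H$. Your explicit appeal to compactness of the pieces $\overline{\fa^+_{\epsilon,1,\scrF}}$ to bound $\Psi(H)$ and the operator norms is a helpful elaboration that the paper leaves implicit; one minor slip is that the ``key exponent identity'' for the final iteration step should read $\eta<\delta_{k_0-1}+\alpha$ (i.e.\ $1<\delta_{k_0-1}+\epsilon$) rather than the reverse inequality, but this does not affect the argument.
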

\begin{proof}
For all $H\in\overline{\fa^+_{\epsilon,1}}$, we have $\alpha_0(H)\geq\epsilon$. By letting $\alpha=\epsilon$ and $\eta=1$, the construction in the proof of Proposition \ref{BurgerProp2} works for all $H\in\overline{\fa^+_{\epsilon,1}}$. Note that we have assumed that $\epsilon<1$ (cf. Remark \ref{EPSremark}), so $\alpha<\eta$; it is therefore this case that is used from Proposition \ref{BurgerProp2}. Also, even though $N$ is \emph{not} the expanding horospherical subgroup associated to those $H$ that lie on one or more of the walls of $\fa^+$, the fact that $U_j(H)=0$ for all $j$ such that $\beta_j(H)=0$ permits the use of the same proof for such $H$, since we have the bound
\begin{equation*}
\left\|-\sum_{j=1}^d e^{\beta_j(H)t}\scrI_{d\pi(U_{\bk}(H))\vecv}^{X_{\bk}\chi}(H,t)\right\| \ll \|\chi\|_{W^{|\bk|,1}(N)}\|\vecv\|_{\scrS^{W|\bk|}(\scrH)} e^{\alpha t}\qquad \forall H\in\overline{\fa^+_{\epsilon,1}},\,t\leq0.
\end{equation*}
Using Lemma \ref{UNIFORMLAMBDASOB} instead of \eqref{lambdaSob} throughout the proof replaces the bound in Proposition \ref{BurgerProp2} \textit{(2)} with the uniform bound stated here (i.e. property \textit{(2)}); in particular, the dependency of the implied constant on $g_{\RR}$ in Proposition \ref{BurgerProp2} \textit{(2)} is removed. 
\end{proof}
\subsection{The Invariant Height Function}
The final result needed before proceeding to the proof of Theorem \ref{maintheorem2} is the following generalization of Corollary \ref{scrYAVG}:
\begin{lem}\label{scrYavg2}
Let $B\subset N$ be a relatively compact subset of positive measure. Then 
\begin{equation*}
\int_B \scrY_{\Gamma}\big(xn\exp(-H)\big)\,dn\ll_{\Gamma,B} \scrY^2_{\Gamma}(x)\qquad \forall x\in\GaG,\,\,H\in\overline{\fa^+}.
\end{equation*}
\end{lem}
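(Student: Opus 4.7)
The plan is to adapt essentially verbatim the argument already given in the proof of Corollary \ref{scrYAVG}, the only new task being to verify that the ``wavefront-type'' contraction used there is uniform over all $H\in\overline{\fa^+}$, not just over the ray $\{tY\col t\leq 0\}$ for a single $Y\in\fa^+$. This uniformity is geometrically transparent: on the Lie algebra $\fn^-$, $\Ad_{\exp(H)}$ acts on the root space $\fg_{-\lambda}$ (with $\lambda\in\Sigma^+(\fg,\fa)$) by $e^{-\lambda(H)}$, and $-\lambda(H)\leq 0$ for every $H\in\overline{\fa^+}$; on $AM$ the adjoint action of $\exp(H)$ is trivial. Hence conjugation by $\exp(H)$ maps any small compact symmetric neighbourhood of $e$ in $AMN^-$ into a fixed compact set.

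More precisely, first I would choose a small compact symmetric neighbourhood $B_0$ of $e$ in $AMN^-$ and set
\begin{equation*}
\widetilde{B}_0=\overline{\bigcup_{H\in\overline{\fa^+}}\exp(H)B_0\exp(-H)}.
\end{equation*}
By the observation above, $\widetilde{B}_0$ is a relatively compact subset of $AMN^-$. Then for any $x\in\GaG$, $n\in N$, $H\in\overline{\fa^+}$ and $b\in\widetilde{B}_0$, write
\begin{equation*}
xn\exp(-H)=xnb\exp(-H)\bigl(\exp(H)b^{-1}\exp(-H)\bigr),
\end{equation*}
and note $\exp(H)b^{-1}\exp(-H)\in\widetilde{B}_0$ (same reason as above, with $b^{-1}\in\widetilde{B}_0^{-1}$ also compact). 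Since right multiplication by elements of a fixed compact set changes $\scrY_\Gamma$ only by a bounded factor (Lemma i) in Section \ref{SOBINEQSEC}), we obtain the uniform pointwise bound
\begin{equation*}
\scrY_\Gamma\bigl(xn\exp(-H)\bigr)\ll_{\widetilde{B}_0}\scrY_\Gamma\bigl(xnb\exp(-H)\bigr).
\end{equation*}

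Next I would integrate this bound over $b\in\widetilde{B}_0$ and use the Haar decomposition $dg=e^{-2\rho_{\fa}(a)}\,d\overline{n}\,dm\,da\,dn$ from \eqref{measuredecomp}; since the modular factor $\Delta(am)=e^{-2\rho_{\fa}(a)}$ is bounded above and below on the compact set $\widetilde{B}_0$, we arrive at
\begin{equation*}
\int_B\scrY_\Gamma\bigl(xn\exp(-H)\bigr)\,dn\ll\int_{B\widetilde{B}_0}\scrY_\Gamma\bigl(xg\exp(-H)\bigr)\,dg.
\end{equation*}
Finally, cover the relatively compact set $B\widetilde{B}_0\subset G$ by finitely many left translates $\scrB g_1,\ldots,\scrB g_R$ of the reference neighbourhood $\scrB$ from the definition of $\scrY_\Gamma$. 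By Corollary \ref{scrYL1cor}, $\sqrt{\scrY_\Gamma}\in L^2(\GaG)$, so applying the definition of $w_\scrB=\scrY_\Gamma$ to the function $f=\sqrt{\scrY_\Gamma}$ translated by $g_j\exp(-H)$ gives
\begin{equation*}
\int_{\scrB}\scrY_\Gamma\bigl(xb g_j\exp(-H)\bigr)\,db=\int_{\scrB}\bigl|\bigl(\rho(g_j\exp(-H))f\bigr)(xb)\bigr|^2\,db\leq \scrY_\Gamma(x)^2\bigl\|\rho(g_j\exp(-H))f\bigr\|_{L^2(\GaG)}^2=\scrY_\Gamma(x)^2\|f\|_{L^2(\GaG)}^2.
\end{equation*}
The unitarity of $\rho(g_j\exp(-H))$ makes this bound completely uniform in $H\in\overline{\fa^+}$, which is the whole point; summing over $j=1,\ldots,R$ yields the claim.

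The main (minor) obstacle is verifying compactness of $\widetilde{B}_0$ cleanly: one must use that $AM$ normalizes $N^-$ and commutes with $A$, so that writing $b=ma n$ with $m\in M$, $a\in A$, $n\in N^-$ gives $\exp(H)b\exp(-H)=ma\cdot \Ad_{\exp(H)}(n)$, with the $N^-$-factor lying in a bounded set uniformly in $H\in\overline{\fa^+}$ by the eigenvalue computation above. Once this is in place, no new analytic input beyond what was used for Corollary \ref{scrYAVG} is required.
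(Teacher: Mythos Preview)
Your proposal is correct and follows essentially the same approach as the paper: the paper's proof of Lemma~\ref{scrYavg2} is a two-line reduction to Corollary~\ref{scrYAVG}, defining $\widetilde{B}_0$ exactly as you do and invoking its compactness. You have in fact supplied more detail than the paper does, in particular the explicit verification that $\widetilde{B}_0$ is compact via the eigenvalue computation on $\fn^-$ and the triviality of $\Ad_{\exp(H)}$ on $AM$.
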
 
\begin{proof}
The proof is essentially the same as that of  Corollary \ref{scrYAVG}; given a compact neighbourhood $B_0$ of $e$ in $AM N^-$, we let $\widetilde{B_0}$ be the closure of $\bigcup_{H\in\overline{\fa^+}} \exp(H) B_0\exp(-H)$. The set $\widetilde{B_0}$ is compact, allowing us to use the same argument as previously to bound the integral.
\end{proof}
\subsection{Proof of Theorem \ref{maintheorem2}} 
Given $H\in\overline{\fa^+}$, we use Lemma \ref{EPSILONH} to find $H_{\epsilon}'$ and $J_{\epsilon}'$ such that $H=H_{\epsilon}'+J_{\epsilon}'$, with $H_{\epsilon}'\in\overline{\fa^+_{\epsilon}}$, $\|H_{\epsilon}'\|\geq (1-c\epsilon)\|H\|$, and $J_{\epsilon}'\in\overline{\fa^+}$. Observe that if $\frac{\alpha_0(H)}{\|H\|}\geq\epsilon$, then $H_{\epsilon}'=H$. We now let $H_{\epsilon}=\frac{H_{\epsilon}'}{\|H_{\epsilon}'\|}\in\overline{\fa^+_{\epsilon,1}}$ and $J_{\epsilon}=\frac{J_{\epsilon}'}{\|H_{\epsilon}'\|}\in \overline{\fa^+}$. As in the proof of Theorem \ref{maintheorem1}, we first make the reduction to the case when $f\in V^{\infty}$ and $\chi\in C_c^{\infty}(B)$. We then have that for $t\leq 0$,
\begin{align*}
\scrI_f^{\chi}(H_{\epsilon}+J_{\epsilon},t)=&\scrI_f^{\chi}(H_{\epsilon},t)+\int_0^1 \frac{d}{d{\tau}} \scrI_f^{\chi}(H_{\epsilon}+\tau J_{\epsilon},t)\,d\tau\\&=\scrI_f^{\chi}(H_{\epsilon},t)-\int_t^0  \int_N \chi(n)\frac{d}{d\tau}\rho\big(n\exp(tH_{\epsilon}+\tau J_{\epsilon})\big)f\,dn\,d\tau\\&=\scrI_f^{\chi}(H_{\epsilon},t)-\int_t^0  \int_N \chi(n)\rho\big(n\exp(tH_{\epsilon}+\tau J_{\epsilon})\big)d\rho(J_{\epsilon})f\,dn\,d\tau.
\end{align*}
The $\tau$-integral is rewritten in the following manner:
\begin{align*}
\int_t^0  \int_N& \chi(n)\rho\big(n\exp(tH_{\epsilon}+\tau J_{\epsilon})\big)d\rho(J_{\epsilon})f\,dn\,d\tau\\&=\int_t^0 \rho(\exp(\tau J_{\epsilon}))  \int_N \chi(n)\rho\big(\exp(-\tau J_{\epsilon})n\exp(\tau J_{\epsilon}) \big)\rho(\exp(tH_{\epsilon}))d\rho(J_{\epsilon})f\,dn\,d\tau\\&=\int_t^0 e^{2\tau\rho_{\fa}(J_{\epsilon})} \rho(\exp(\tau J_{\epsilon}))  \int_N \chi\big(\exp(\tau J_{\epsilon})n\exp(-\tau J_{\epsilon})\big)\rho\big(n\exp(tH_{\epsilon})\big)d\rho(J_{\epsilon})f\,dn\,d\tau\\& =\int_t^0 e^{2\tau\rho_{\fa}(J_{\epsilon})} \rho(\exp(\tau J_{\epsilon}))\scrI_{d\rho(J_{\epsilon})f}^{\chi_{\tau}}(H_{\epsilon},t)\,d\tau,
\end{align*} 
where, for any $\psi\in C_c^{\infty}(N)$,
\begin{equation*}
\psi_{\tau}(n):=\psi\big(\exp(\tau J_{\epsilon})n\exp(-\tau J_{\epsilon})\big).
\end{equation*}
This is now entered into the original equation, giving
\begin{equation}\label{MAINHEPSIDENT}
\scrI_f^{\chi}(H_{\epsilon}+J_{\epsilon},t)=\scrI_f^{\chi}(H_{\epsilon},t)-\int_t^0 e^{2\tau\rho_{\fa}(J_{\epsilon})} \rho(\exp(\tau J_{\epsilon}))\scrI_{d\rho(J_{\epsilon})f}^{\chi_{\tau}}(H_{\epsilon},t)\,d\tau.
\end{equation}
We will now use Proposition \ref{BurgerProp3} in the same way that Proposition \ref{BurgerProp2} was used in the proof of Theorem \ref{maintheorem1}: let $\lbrace T_{\bk}(H_{\epsilon},t,s)\rbrace$, $\lbrace S_{\bj,i}(H_{\epsilon},t)\rbrace$ be the intertwining operators defined on $V^{\infty}$ by (for $\phi\in V^{\infty}$)
\begin{align*}
T_{\bk}(H_{\epsilon},t,s)\phi&=\int_{\mathsf{Z}} \scrC_{\zeta,\bk}(H_{\epsilon},t,s) \phi_{\zeta}\,d\upsilon(\zeta)\\ S_{\bj,i}(H_{\epsilon},t)\phi&=\int_{\mathsf{Z}}  \scrD_{\zeta,\bj,i}(H_{\epsilon},t)_{\zeta}\phi_{\zeta}\,d\upsilon(\zeta).
\end{align*}
Since $H_{\epsilon}$ will be fixed for the remainder of the proof, for notational convenience we temporarily suppress the argument in the functions $U_{\bk}$; that is $U_{\bk}=U_{\bk}(H_{\epsilon})$. Analogously to the proof of Theorem \ref{maintheorem1}, by Proposition \ref{BurgerProp3}, these intertwining operators may be used to write \eqref{MAINHEPSIDENT} as
\begin{align}\label{mainUNIIDENT}
\scrI_f^{\chi}(H_{\epsilon}+J_{\epsilon},t)\!=\!\!&\sum_{\bk\in\bI_{m_2}}\!\!\int_{-\infty}^0\! T_{\bk}(H_{\epsilon},t,s)\scrI_{d\rho(U_{\bk})f}^{X_{\bk}\chi}(H_{\epsilon},s)\,ds+\!\!\!\sum_{\bj\in\bI_{\leq m_2 -1}}\sum_{i=0}^{W} S_{\bj,i}(H_{\epsilon},t)\scrI_{d\rho(H_{\epsilon}^{i}U_{\bj})f}^{X_{\bj}\chi}(H_{\epsilon},0)\\\notag&- \sum_{\bk\in\bI_{m_2}}\int_t^0 \int_{-\infty}^0 e^{2\tau\rho_{\fa}(J_{\epsilon})} \rho(\exp(\tau J_{\epsilon}))T_{\bk}(H_{\epsilon},t,s)\scrI_{d\rho(U_{\bk}J_{\epsilon})f}^{X_{\bk}\chi_{\tau }}(H_{\epsilon},s)\,ds\,d\tau
\\\notag&\quad- \sum_{\bj\in\bI_{\leq m_2 -1}}\sum_{i=0}^{W}\int_t^0 e^{2\tau\rho_{\fa}(J_{\epsilon})} \rho(\exp(\tau J_{\epsilon})) S_{\bj,i}(H_{\epsilon},t)\scrI_{d\rho(H_{\epsilon}^{i}U_{\bj}J_{\epsilon})f}^{X_{\bj}\chi_{\tau}}(H_{\epsilon},0)\,d\tau,
\end{align}
where $m_2=\lfloor \frac{2}{\epsilon}\rfloor$. For any element $X_j$ of our chosen root basis of $\fn$, we have
\begin{align*}
(X_{j}\chi_{\tau})(n)=&\left.\frac{d}{dv}\right|_{v=0} \chi_{\tau}(n\exp(v X_{\mu}))=\left.\frac{d}{dv}\right|_{v=0} \chi\big(\exp(\tau J_{\epsilon})n\exp(v X_j)\exp(-\tau J_{\epsilon})\big)\\&=\left.\frac{d}{dv}\right|_{v=0} \chi\big(\exp(\tau J_{\epsilon})n\exp(-\tau J_{\epsilon})\exp(v\Ad_{\exp(\tau J_{\epsilon})}X_j)\big)\\&\quad= e^{\tau\beta_j(J_{\epsilon})}(X_j\chi)\big(\exp(\tau J_{\epsilon})n\exp(-\tau J_{\epsilon})\big)=e^{\tau\beta_j(J_{\epsilon})}(X_j\chi)_{\tau}(n).
\end{align*}
Hence, for any multi-index $\bk=(k_1,\ldots,k_l)$,
\begin{align*}
(X_{\bk}\chi_{\tau})(n)=e^{\tau(\beta_{k_1}+\ldots+\beta_{k_l})(J_{\epsilon})}  (X_{\bk}\chi)_{\tau}(n).
\end{align*}
Since $J_{\epsilon}\in\overline{\fa^+}$, $\beta_j(J_{\epsilon})\geq 0$ for all $j=1,\ldots,d$, giving $e^{\tau\beta_j(J_{\epsilon})}\leq 1$ ($\tau\leq 0$), and so 
\begin{equation*}
|(X_j\chi_{\tau})(n)|= \left|e^{\tau\beta_j(J_{\epsilon})}(X_j\chi)_{\tau}(n)\right|=\left|e^{\tau\beta_j(J_{\epsilon})}(X_j\chi)\big(\exp(\tau J_{\epsilon})n\exp(-\tau J_{\epsilon})\big)\right|\leq \|X_j\chi\|_{L^{\infty}(N)}.
\end{equation*}
Hence, for any multi-index $\bk$ and $\tau\leq 0$,
\begin{equation}\label{ADJbnd}
 \|X_{\bk}\chi_{\tau}\|_{L^{\infty}(N)}= \|e^{\tau(\beta_{k_1}+\ldots+\beta_{k_l})(J_{\epsilon})}  (X_{\bk}\chi)_{\tau}\|_{L^{\infty}(N)}\leq\|(X_{\bk}\chi)_{\tau}\|_{L^{\infty}(N)}=\|X_{\bk}\chi\|_{L^{\infty}(N)}.
\end{equation}
We now consider the terms occurring in the last two lines of \eqref{mainUNIIDENT}. To start with, for $\bk=(k_1,\dots k_{m_2})\in\bI_{m_2}$, we have
\begin{align*}
&\int_t^0 \int_{-\infty}^0 e^{2\tau\rho_{\fa}(J_{\epsilon})} \rho(\exp(\tau J_{\epsilon}))T_{\bk}(H_{\epsilon},t,s)\scrI_{d\rho(U_{\bk}J_{\epsilon})f}^{X_{\bk}\chi_{\tau }}(H_{\epsilon},s)\,ds\,d\tau\\&=\int_t^0 \int_{-\infty}^0\int_N e^{2\tau\rho_{\fa}(J_{\epsilon})} e^{\tau(\beta_{k_1}+\ldots+\beta_{k_{m_2}})(J_{\epsilon})}\big(X_{\bk}\chi\big)_{\tau}(n)\\&\qquad\qquad\qquad\qquad\qquad\qquad \times\rho\big(\exp(\tau J_{\epsilon})n\exp(s H_{\epsilon})\big)T_{\bk}(H_{\epsilon},t,s)d\rho(U_{\bk}J_{\epsilon})f\,dn\,ds\,d\tau
\\&=\int_t^0 \int_{-\infty}^0\int_N e^{\tau(\beta_{k_1}+\ldots+\beta_{k_{m_2}})(J_{\epsilon})}(X_{\bk}\chi)(n)\rho\big(n\exp(\tau J_{\epsilon}+s H_{\epsilon})\big)T_{\bk}(H_{\epsilon},t,s)d\rho(U_{\bk}J_{\epsilon})f\,dn\,ds\,d\tau.
\end{align*}
Taking into account the fact that $\supp(\chi)\subset B$, the bound given in Proposition \ref{BurgerProp3} \textit{(1)}, Proposition \ref{sobbnd}, and Lemma \ref{scrYavg2} (note that $-\tau J_{\epsilon}-s H_{\epsilon}\in\overline{\fa^+}$ for all $\tau,s\leq 0$), we have the following bound for $m>\frac{\dim G}{2}$ and all $x\in\GaG$:
\begin{align*}
&\int_N \left| e^{\tau(\beta_{k_1}+\ldots+\beta_{k_{m_2}})(J_{\epsilon})}(X_{\bk}\chi)(n)T_{\bk}(H_{\epsilon},t,s)d\rho(U_{\bk}(H_{\epsilon})J_{\epsilon})f\big(x n\exp(\tau J_{\epsilon}+s H_{\epsilon})\big)\right|\,dn\\&\qquad\ll e^{\tau(\beta_{k_1}+\ldots+\beta_{k_{m_2}})(J_{\epsilon})} \| X_{\bk}\chi\|_{L^{\infty}(B)}\|d\rho(U_{\bk}(H_{\epsilon})J_{\epsilon})f\|_{\scrS^m(\GaG)}\scrY_{\Gamma}(x)^2(1+|t|^{W})e^{ t+\frac{\epsilon s}{5}}.
\end{align*}
Since $\|J_{\epsilon}\|< 1$ (cf. Remark \ref{EPSremark}), $\|d\rho(J_{\epsilon})\phi\|_{L^2(\GaG)}\ll \|\phi\|_{\scrS^1(\GaG)}$ for all $\phi\in V^{\infty}$. This, together with \eqref{ADJbnd} and Lemma \ref{LIEIDENT2} (in particular, the continuity of $Y\mapsto U_{j}(Y)$ on each connected component of $\overline{\fa^+_{\epsilon,1}}$) gives
\begin{align*}
\int_t^0\!\!\! & \int_{-\infty}^0\!\int_N \! \left| e^{\tau(\beta_{k_1}+\ldots+\beta_{k_{m_2}})(J_{\epsilon})}\!(X_{\bk}\chi)(n)T_{\bk}(H_{\epsilon},t,s)d\rho(U_{\bk}(H_{\epsilon})J_{\epsilon})f\big(x n\exp(\tau J_{\epsilon}+s H_{\epsilon})\big)\!\right|\!dn\,ds\,d\tau\\&\qquad\ll \| \chi\|_{W^{m_2,\infty}(B)}\|f\|_{\scrS^{W m_2+1+m}(\GaG)}\scrY_{\Gamma}(x)^2(1+|t|^{W+1})e^{ t},
\end{align*}
Similar arguments are used for the other terms of \eqref{mainUNIIDENT} that have an integral term $\int_{t}^0\,d\tau$, i.e. for $\bj=(j_1,\ldots,j_l)\in\bI_{\leq m_2-1}$ and $i\in \lbrace 0, \ldots, W\rbrace$,
\begin{align*}
&\left|\int_t^0 e^{2\tau\rho_{\fa}(J_{\epsilon})} \left(\rho(\exp(\tau J_{\epsilon})) S_{\bj,i}(H_{\epsilon},t)\scrI_{d\rho(H_{\epsilon}^{i}U_{\bj}J_{\epsilon})f}^{X_{\bj}\chi_{\tau}}(H_{\epsilon},0)\right)(x)\,d\tau\right|\\&\quad
\leq\int_t^0 \int_N e^{\tau(\beta_{j_1}+\ldots+\beta_{j_l})(J_{\epsilon})}\left|(X_{\bj})\chi)(n)S_{\bj,i}(H_{\epsilon},t)d\rho(H_{\epsilon}^{i}U_{\bj}(H_{\epsilon})J_{\epsilon})f\big(x n\exp(\tau J_{\epsilon})\big)\right|\,dn \,d\tau\\&\qquad\qquad\ll \| \chi\|_{W^{m_2,\infty}(B)}\|f\|_{\scrS^{W(m_2+3)+1+m}(\GaG)}\scrY_{\Gamma}(x)^2(1+|t|^{W+1})e^{ t},
\end{align*}
where Proposition \ref{BurgerProp3} \textit{(2)} was now used. The remaining terms in \eqref{mainUNIIDENT} are dealt with in same way as in the proof of Theorem \ref{maintheorem1}. Putting these bounds together, we have
\begin{align*}
&\left| \int_B \chi(n)f\big(xn\exp(t(H_{\epsilon}+J_{\epsilon})\big)\,dn\right|\ll \| \chi\|_{W^{m_2,\infty}(B)}\|f\|_{\scrS^{m_1}(\GaG)}\scrY_{\Gamma}(x)^2(1+|t|^{W+1})e^{t}.
\end{align*}
Letting $t=-\|H_{\epsilon}'\|$ (recall that $H=\|H_{\epsilon}'\|(H_{\epsilon}+J_{\epsilon})$ and $ (1-c\epsilon)\|H\|\leq \|H_{\epsilon}'\|\leq (1+c\epsilon)\|H\|$) now completes the proof.

\hspace{425.5pt}\qedsymbol

\end{document}